\documentclass[11pt, a4paper]{amsart}
\usepackage{a4wide}
\usepackage{graphicx,enumitem}
\usepackage{subfigure}
\usepackage{units}
\usepackage{color}
\usepackage{algorithm,algorithmic}

\usepackage{ulem}
\normalem

\usepackage{latexsym}

\usepackage[pdftex,
            pdftitle={Stochastic wave equation on the sphere},
            pdfauthor={D. Cohen and A. Lang},
            bookmarksopen,
            colorlinks,
            linkcolor=black,
            urlcolor=black,
            citecolor=black
]{hyperref}

\usepackage{dsfont}

\allowdisplaybreaks
\usepackage{mathtools}

\newcommand{\IP}{\mathbb{P}}
\newcommand{\R}{\mathbb{R}}
\newcommand{\C}{\mathbb{C}}
\newcommand{\N}{\mathbb{N}}

\newcommand{\IS}{\mathbb{S}}
\newcommand{\IT}{\mathbb{T}}

\newcommand{\ga}{\alpha}
\newcommand{\gb}{\beta}
\newcommand{\gd}{\delta}

\newcommand{\gk}{\kappa}

\newcommand{\gs}{\sigma}

\newcommand{\vp}{\varphi}
\newcommand{\vt}{\vartheta}

\newcommand{\gO}{\Omega}


\newcommand{\cA}{\mathcal{A}}
\newcommand{\cB}{\mathcal{B}}


\newcommand{\cF}{\mathcal{F}}

\newcommand{\cN}{\mathcal{N}}

\newcommand{\cY}{\mathcal{Y}}

\DeclareMathOperator{\E}{\mathbb{E}} 

\let\Re\relax
\DeclareMathOperator{\Re}{Re}
\let\Im\relax
\DeclareMathOperator{\Im}{Im}

\newcommand{\KL}{Karhunen--Lo\`eve }

\newtheorem{lemma}{Lemma}[section]
\newtheorem{proposition}[lemma]{Proposition}

\newtheorem{theorem}[lemma]{Theorem}

\theoremstyle{remark}
\newtheorem{remark}[lemma]{Remark}

\theoremstyle{definition}

\newtheorem{assumption}[lemma]{Assumption}

%

\definecolor{darkred}{rgb}{.6,0,0}


\newcommand*\diff{\mathop{}\!\mathrm{d}}
\newcommand{\e}{\mathrm{e}}
\newcommand{\ii}{\ensuremath{\mathrm{i}}}

\begin{document}
\title[Approximation of the stochastic wave equation on the sphere]{
Numerical approximation and simulation of the stochastic wave equation on the sphere}

\author[D.~Cohen]{David Cohen} \address[David Cohen]{
\newline Department of Mathematical Sciences
\newline Chalmers University of Technology \& University of Gothenburg
\newline S--412 96 G\"oteborg, Sweden.} \email[]{david.cohen@chalmers.se}

\author[A.~Lang]{Annika Lang} \address[Annika Lang]{
\newline Department of Mathematical Sciences
\newline Chalmers University of Technology \& University of Gothenburg
\newline S--412 96 G\"oteborg, Sweden.} \email[]{annika.lang@chalmers.se}

\thanks{
Acknowledgment. 
The work of DC was partially supported by the Swedish Research Council (VR) (project nr.\ 2018-04443). 
The work of AL was partially supported by the Swedish Research Council (VR) (project nr.\ 2020-04170), by the Wallenberg AI, Autonomous Systems and Software Program (WASP) funded by the Knut and Alice Wallenberg Foundation, and by the Chalmers AI Research Centre (CHAIR)}

\subjclass{60H15, 60H35, 65C30, 60G15, 60G60, 60G17, 33C55, 41A25}
\keywords{Gaussian random fields, \KL expansion, spherical harmonic functions, 
stochastic partial differential equations, stochastic wave equation, stochastic Schr\"odinger equation, 
sphere, spectral Galerkin methods, strong and weak convergence rates, almost sure convergence}

\begin{abstract}
Solutions to the stochastic wave equation on the unit sphere are approximated by spectral methods. 
Strong, weak, and almost sure convergence rates for the proposed numerical schemes 
are provided and shown to depend only on the smoothness of the driving noise and the initial conditions. Numerical experiments confirm the theoretical rates. 
The developed numerical method is extended to stochastic wave equations 
on higher-dimensional spheres and to the free stochastic Schr\"odinger equation on the unit sphere. 
\end{abstract}

\maketitle
\section{Introduction}
The recent years have witnessed a strong interest in the theoretical study of (regularity) properties and the simulation
of random fields, especially the ones that are defined by stochastic partial differential equations (SPDEs) 
on Euclidean spaces. This increase in the interest in random fields is due to the huge 
demand from applications as diverse as models for  
the motion of a strand of DNA floating in a fluid~\cite{Dalang2009}, 
climate and weather forecast models~\cite{Hasselmann}, models for the initiation 
and propagation of action potentials in neurons~\cite{1002247}, 
random surface grow models~\cite{PhysRevLett.56.889}, porous media and subsurface flow~\cite{C12}, 
or modeling of fibrosis in atrial tissue~\cite{10.3389/fphys.2018.01052}, for instance. 

Yet, leaving the (by now well understood) Euclidean setting, theoretical results on random fields on Riemannian manifolds 
have just started to pop up in the literature. So far, this research has mostly focused on random fields on the sphere, 
e.\,g., \cite{MR1874654,MR2568294,MP11,MR3404631,MR3769662,MR4091198} and references therein. 
The interest of random fields on spheres is essentially driven by the fact that our planet Earth is 
approximately a sphere. 

One example of an SPDE on the sphere and the main subject of the numerical analysis of this work is the 
\emph{stochastic wave equation}
\begin{equation*}
\partial_{tt}u(t)-\Delta_{\IS^2}u(t)=\dot{W}(t),
\end{equation*}
driven by an isotropic $Q$-Wiener process. For details on the notation, see below.
Besides the intrinsic mathematical interest, one motivation to study this equation  
comes from~\cite{MR4031900}. This work proposes and analyzes stochastic diffusion models 
for cosmic microwave background (CMB) radiation studies. 
Such models are given by damped wave equations on the sphere with random initial conditions. Since fluctuations in CMB observations may be generated by errors in the CMB map, contamination from the galaxy or distortions in the optics of the telescope \cite{MR4031900}, 
one may be interested in considering a driving noise living on the sphere. 

Unfortunately, to this day, available and well-analyzed algorithms for an efficient simulation of random fields on manifolds 
do not match the current demand from applications. To name a few results from the literature on numerics for SPDEs on manifolds: 
the paper~\cite{MR3404631} proves rates of convergence for a spectral discretization of 
the heat equation on the sphere driven by an additive isotropic Gaussian noise; 
convergence rates of multilevel Monte Carlo finite and spectral element 
discretizations of stationary diffusion equations on the unit sphere with isotropic lognormal diffusion coefficients are considered 
in~\cite{MR3768993}; 
\cite{MR3763911} proposes a simulation method for Gaussian fields defined over spheres cross time; 
a numerical approximation to solutions to random spherical hyperbolic diffusions is analyzed in~\cite{MR4031900}; 
rates of convergence of approximation schemes to solutions to 
fractional SPDEs on the unit sphere are shown in~\cite{123456}; 
the work~\cite{MR3907363} studies a numerical scheme for simulating 
stochastic heat equations on the unit sphere with multiplicative noise; 
in~\cite{MR4059369} multilevel algorithms for the fast simulation of 
nonstationary Gaussian random fields on compact manifolds are analyzed.
We are not aware of any results on numerical approximations of stochastic wave equations on manifolds.

In the present publication, we derive a representation of the infinite-dimensional analytical solution of the stochastic 
wave equation on the sphere driven by an isotropic $Q$-Wiener noise. 
This needs to be numerically approximated in order to be able to efficiently generate sample paths. 
The proposed algorithm is given by the truncation of a series expansion of the analytical solution, see~\eqref{eq:ansatz}.  
We prove strong and almost sure convergence rates of the fully discrete approximation scheme in Proposition~\ref{prop:error}. 
This is then used to show weak convergence results in Proposition~\ref{prop:weak} and Proposition~\ref{prop:weak2}.
It turns out that these rates depend only on the decay of the angular power spectrum of the driving noise and the smoothness of the initial condition while they are independent of the chosen space and time grids. 
We show that depending on the smoothness of test functions, we obtain up to twice the strong order of convergence. 
These results are shown for the stochastic wave equation 
on the unit sphere~$\IS^2$ and then, strong and almost sure convergence results are 
extended to higher-dimensional spheres~$\IS^{d-1}$. 
Finally we obtain similar results for a related equation, namely the \emph{free stochastic Schr\"odinger equation  
on the sphere} driven by an isotropic noise. Observe that the extension of our results to damped and nonlinear problems is not straightforward and needs further analysis. In particular, one would have to deal with additional errors in the space and time discretization. 

A peculiarity in the present approach is that we are able to obtain two equations for the position and velocity component of the stochastic wave equation that can be simulated separately but with respect to two correlated driving noises. Therefore we put some focus on the properties of these correlated random fields and their simulation, see Proposition~\ref{prop:covar_stoch_conv}. With these in place we are able to show convergence of 
the position even when the series expansion of the velocity does not converge. 

The outline of the paper is as follows:
In Section~\ref{sec:setting} we recall definitions
of isotropic Gaussian random fields on~$\IS^2$,  
of the \KL expansion in spherical harmonic functions of 
these fields from~\cite{MP11,MR3404631}, and of Wiener processes on the sphere. 
This then allows us to define the stochastic wave equation on the sphere in Section~\ref{sec:sweS} and analyze its properties based on the semigroup approach. 
In Section~\ref{sec:conv} we approximate solutions to the SPDE  
with spectral methods. In addition, we provide convergence rates 
of these approximations in the $p$-th moment, in the $\IP$-almost sure sense, and in the weak sense. 
Details on the numerical implementation of the studied discretizations are also presented in this section. 
Numerical illustrations of our theoretical findings are given 
in Section~\ref{sec:num}. Although the main focus of the paper is the stochastic wave equation on the unit sphere~$\IS^2$, 
we include two extensions in the last section that can be solved with the developed theory. Namely,  
an extension of the corresponding results to higher-dimensional spheres~$\IS^{d-1} \subset \R^d$ 
and an efficient algorithm for simulating the free stochastic 
Schr\"odinger equation on the sphere with its convergence properties.
\section{Isotropic Gaussian random fields and Wiener processes on the sphere}\label{sec:setting}
We recall some notions and results, mostly from \cite{MR3404631}, 
in order to be able to define SPDEs on the sphere in the next section. 

Throughout, we denote by $(\gO, \cA, (\cF_t)_t, \IP)$ a complete filtered probability space 
and write $\IS^2$ for the unit sphere in~$\R^3$, i.\,e.,
\begin{equation*}
 \IS^2
  = \left\{ x \in \R^3, \|x\|_{\R^3} = 1\right\},
\end{equation*}
where $\|\cdot\|_{\R^3}$ denotes the Euclidean norm. 
Let $(\IS^2,d)$ be the compact metric space with the geodesic metric given by
  \begin{equation*}
   d(x,y)
    = \arccos \left(\langle x,y \rangle_{\R^3}\right)
  \end{equation*}
for all $x,y \in \IS^2$. 
We denote by $\cB(\IS^2)$ the Borel $\gs$-algebra of~$\IS^2$.  

To introduce basis expansions often also called \emph{\KL expansions} of a $Q$-Wiener process on the sphere, we first need to define spherical harmonic functions on~$\IS^2$. 
We recall that the \emph{Legendre polynomials}  
$(P_\ell, \ell \in \N_0)$ are for example 
given by Rodrigues' formula (see, e.\,g., \cite{Szego})
\begin{equation*}
P_\ell(\mu)
= 
2^{-\ell} \frac{1}{\ell!} \, \frac{\partial^\ell}{\partial \mu^\ell} (\mu^2 -1)^\ell
\end{equation*}
for all $\ell \in \N_0$ and $\mu \in [-1,1]$.
These polynomials define the \emph{associated Legendre functions} $(P_{\ell, m}, \ell \in \N_0,m=0,\ldots,\ell)$ by
\begin{equation*}
P_{\ell, m}(\mu)
= (-1)^m (1-\mu^2)^{m/2} \frac{\partial^m}{\partial \mu^m} P_\ell(\mu)
\end{equation*}
for $\ell \in \N_0$, $m = 0,\ldots,\ell$, and $\mu \in [-1,1]$. 
We further introduce
the \emph{surface spherical harmonic functions} 
$\cY = (Y_{\ell, m}, \ell \in \N_0, m=-\ell, \ldots, \ell)$ 
as mappings 
$Y_{\ell, m}\colon [0,\pi] \times [0,2\pi) \to \C$, 
which are given by
\begin{equation*}
Y_{\ell, m}(\vt, \vp)
= \sqrt{\frac{2\ell + 1}{4\pi}\frac{(\ell-m)!}{(\ell+m)!}} P_{\ell, m}(\cos \vt) \e^{\ii m\vp}
\end{equation*}
for $\ell \in \N_0$, $m = 0,\ldots, \ell$, and $(\vt,\vp) \in [0,\pi] \times [0,2\pi)$ and by
\begin{equation*}
Y_{\ell, m}
= (-1)^m \overline{Y_{\ell, -m}}
\end{equation*}
for $\ell \in \N$ and $m=-\ell, \ldots,-1$. It is well-known that the spherical harmonics 
form an orthonormal basis of~$L^2(\IS^2)$, the subspace of real-valued functions in $L^2(\IS^2;\C)$.
In what follows we set for $y \in \IS^2$
\begin{equation*}
Y_{\ell, m}(y)
= Y_{\ell, m}(\vt,\vp),
\end{equation*}
where $y = (\sin \vt \cos \vp, \sin \vt \sin \vp, \cos \vt)$,
i.\,e., we identify (with a slight abuse of notation) 
Cartesian and angular coordinates of the point $y\in \IS^2$.
Furthermore we denote by $\gs$ the \emph{Lebesgue measure on the sphere}
which admits the representation
\begin{equation*}
d\gs(y)
= \sin \vt \, d\vt \, d\vp
\end{equation*}
for $y \in \IS^2$, $y = (\sin \vt \cos \vp, \sin \vt \sin \vp, \cos \vt)$.

The \emph{spherical Laplacian}, also called \emph{Laplace--Beltrami operator}, is given in terms of spherical coordinates similarly to Section~3.4.3 in~\cite{MP11} by
\begin{equation*}
\Delta_{\IS^2}
= (\sin \vt)^{-1} \frac{\partial}{\partial \vt} \left( \sin \vt \, \frac{\partial}{\partial \vt} \right)
+ (\sin \vt)^{-2} \frac{\partial^2}{\partial \vp^2}.
\end{equation*}
It is well-known (see, e.\,g., Theorem~2.13 in~\cite{M98})
that the spherical harmonic functions~$\cY$ 
are the eigenfunctions of~$\Delta_{\IS^2}$ with 
eigenvalues $(-\ell(\ell+1), \ell \in \N_0)$, i.\,e.,
\begin{equation*}
\Delta_{\IS^2} Y_{\ell, m}
= - \ell(\ell+1) Y_{\ell, m}
\end{equation*}
for all $\ell \in \N_0$, $m = -\ell, \ldots,\ell$. 

To characterize the regularity of solutions to SPDEs in what follows, we introduce 
the Sobolev space on~$\IS^2$ for a smoothness index $s\in\R$
$$
H^s(\IS^2)=(\text{Id}-\Delta_{\IS^2})^{-s/2}L^2(\IS^2)
$$
together with its norm
$$
\|f\|_{H^s(\IS^2)}=\|(\text{Id}-\Delta_{\IS^2})^{s/2}f\|_{L^2(\IS^2)}
$$
for some $f\in H^s(\IS^2)$ with $H^0(\IS^2) = L^2(\IS^2)$.

Furthermore, we work on $L^p(\gO;H^s(\IS^2))$ with norm
\begin{equation*}
    \|Z\|_{L^p(\gO;H^s(\IS^2))}
 = \E\left[\|Z\|_{H^s(\IS^2)}^p\right]^{1/p}
\end{equation*}
for finite $p \ge 1$ and are now in place to introduce the following definitions:
 
A $\cA \otimes \cB(\IS^2)$-measurable mapping $Z\colon\Omega \times \IS^2 \rightarrow \R$ is called 
a \emph{real-valued random field} on the unit sphere. 
Such a random field is called \emph{Gaussian} if 
for all $k \in \N$ and $x_1, \ldots, x_k \in \IS^2$,  
the multivariate random variable $(Z(x_1),\ldots,Z(x_k))$ is multivariate 
Gaussian distributed.
Finally, such a random field is called \emph{isotropic} if its 
covariance function only depends on the distance $d(x,y)$, for $x,y\in\IS^2$. 

We recall Theorem~2.3 and Lemma~5.1 in~\cite{MR3404631} on the series expansions of isotropic Gaussian random fields on the sphere. 

\begin{lemma}
A centered, isotropic Gaussian random field~$Z$ has a converging \KL expansion 
  \begin{equation*}
   Z = \sum_{\ell=0}^\infty \sum_{m=-\ell}^\ell a_{\ell, m} Y_{\ell, m}
  \end{equation*}
  with $a_{\ell, m} = (Z,Y_{\ell, m})_{L^2(\IS^2)}$ and $A_\ell = \E[a_{\ell, m} \overline{a_{\ell, m}}]$ for all $m=-\ell,\ldots,\ell$, where $(A_\ell,\ell\in\N_0)$ is called the \emph{angular power spectrum} of~$Z$. 
 For $\ell \in \N$, $m = 1,\ldots,\ell$, and $\vt \in [0,\pi]$ set
  \begin{equation*}
   L_{\ell, m}(\vt)
    = \sqrt{\frac{2\ell +1}{4\pi} \frac{(\ell-m)!}{(\ell+m)!}}
	P_{\ell, m}(\cos \vt).
  \end{equation*}
Then for $y= (\sin \vt \cos \vp, \sin \vt \sin \vp, \cos \vt)$
\begin{equation*}
   Z(y)
    = \sum_{\ell=0}^\infty \left(\sqrt{A_\ell} X^1_{\ell, 0} L_{\ell, 0}(\vt) 
	+ \sqrt{2A_\ell} \sum_{m=1}^\ell L_{\ell, m}(\vt) (X^1_{\ell, m} \cos(m\vp) + X^2_{\ell, m} \sin(m \vp))\right)
  \end{equation*}
in law, where $((X^1_{\ell, m}, X^2_{\ell, m}), \ell \in \N_0, m=0,\ldots, \ell)$ is a sequence of independent, real-valued, standard normally distributed random variables and $X^2_{\ell, 0} = 0$ for $\ell \in \N_0$. 
\end{lemma}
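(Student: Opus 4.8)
The plan is to read off the expansion from the orthonormal basis $\cY$ of $L^2(\IS^2;\C)$ and then use isotropy to pin down the law of the coefficients. First I would set $a_{\ell,m} = (Z,Y_{\ell,m})_{L^2(\IS^2)}$; since each $a_{\ell,m}$ is a continuous linear functional of the Gaussian field $Z$, the family $(a_{\ell,m})$ is centered and jointly (complex) Gaussian, and the first display is nothing but the $L^2(\gO;L^2(\IS^2))$-expansion of $Z$ in $\cY$. The only analytic content left in that display is the mean-square convergence of the series, which I defer until the coefficient variances are known.

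The key step is to show that isotropy forces $\E[a_{\ell,m}\overline{a_{\ell',m'}}] = A_\ell\,\delta_{\ell\ell'}\delta_{mm'}$. I would argue through the covariance kernel $k(x,y) = \E[Z(x)Z(y)]$, which by isotropy depends on $x,y$ only through $\langle x,y\rangle_{\R^3}$; expanding it in Legendre polynomials as $k(x,y) = \sum_\ell \frac{2\ell+1}{4\pi}A_\ell P_\ell(\langle x,y\rangle_{\R^3})$ and applying the addition theorem $\frac{2\ell+1}{4\pi}P_\ell(\langle x,y\rangle_{\R^3}) = \sum_{m} Y_{\ell,m}(x)\overline{Y_{\ell,m}(y)}$, I substitute into the double integral defining $\E[a_{\ell,m}\overline{a_{\ell',m'}}]$ and use orthonormality to read off the claimed diagonal, $m$-independent form. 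Equivalently, one invokes that the covariance operator commutes with the $\SO(3)$-action and that each degree-$\ell$ eigenspace of $\Delta_{\IS^2}$ is $\SO(3)$-irreducible, so Schur's lemma gives $CY_{\ell,m} = A_\ell Y_{\ell,m}$. Summing the diagonal then yields $\E[\|Z\|_{L^2(\IS^2)}^2] = \sum_\ell (2\ell+1)A_\ell$, whose finiteness delivers the mean-square convergence of the first display by Parseval's identity.

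For the real representation I would exploit that $Z$ being real-valued and $Y_{\ell,-m} = (-1)^m\overline{Y_{\ell,m}}$ impose the reality constraint $a_{\ell,-m} = (-1)^m\overline{a_{\ell,m}}$. Pairing the $m$ and $-m$ contributions gives $a_{\ell,m}Y_{\ell,m} + a_{\ell,-m}Y_{\ell,-m} = 2\Re(a_{\ell,m}Y_{\ell,m})$, and writing $a_{\ell,m} = b_{\ell,m} + \ii c_{\ell,m}$ together with $Y_{\ell,m} = L_{\ell,m}(\vt)\e^{\ii m\vp}$ turns each pair into $2L_{\ell,m}(\vt)\bigl(b_{\ell,m}\cos(m\vp) - c_{\ell,m}\sin(m\vp)\bigr)$, while the $m=0$ term is simply the real variable $a_{\ell,0}L_{\ell,0}(\vt)$. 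To identify the variances, I compute $\E[a_{\ell,m}^2]$ for $m\geq 1$: by the reality constraint this equals $(-1)^m\E[a_{\ell,m}\overline{a_{\ell,-m}}]$, which vanishes by the diagonal covariance, forcing $\E[b_{\ell,m}^2] = \E[c_{\ell,m}^2]$ and $\E[b_{\ell,m}c_{\ell,m}] = 0$; combined with $\E[|a_{\ell,m}|^2] = A_\ell$ this gives each of $b_{\ell,m},c_{\ell,m}$ variance $A_\ell/2$. Setting $X^1_{\ell,m} = \sqrt{2/A_\ell}\,b_{\ell,m}$, $X^2_{\ell,m} = -\sqrt{2/A_\ell}\,c_{\ell,m}$ (and $X^1_{\ell,0} = a_{\ell,0}/\sqrt{A_\ell}$, $X^2_{\ell,0} = 0$), modulo the harmless case $A_\ell = 0$, reproduces the factors $\sqrt{2A_\ell}$ and $\sqrt{A_\ell}$ in the statement.

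It remains to check joint independence of $((X^1_{\ell,m},X^2_{\ell,m}))$. Since all these variables are linear in the jointly Gaussian family $(a_{\ell,m})$, it suffices to verify that they are pairwise uncorrelated, which follows from the diagonal, $m$-independent covariance of the second step together with the reality constraint; joint Gaussianity then upgrades uncorrelatedness to independence. I expect the covariance diagonalization to be the genuine obstacle, since it is the only place isotropy enters in an essential way, whereas the remaining bookkeeping, in particular tracking the correlations linking the $m$ and $-m$ coefficients, is routine.
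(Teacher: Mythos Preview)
Your argument is correct and follows the standard route: diagonalize the covariance via the addition theorem (or Schur's lemma) to get $\E[a_{\ell,m}\overline{a_{\ell',m'}}]=A_\ell\delta_{\ell\ell'}\delta_{mm'}$, then unwind the reality constraint $a_{\ell,-m}=(-1)^m\overline{a_{\ell,m}}$ to produce the real expansion with independent standard normals. The variance computation for the real and imaginary parts via $\E[a_{\ell,m}^2]=(-1)^m\E[a_{\ell,m}\overline{a_{\ell,-m}}]=0$ is the right trick, and the independence reduction to pairwise uncorrelatedness under joint Gaussianity is clean.

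There is, however, nothing to compare against here: the paper does not prove this lemma but merely recalls it from Theorem~2.3 and Lemma~5.1 of~\cite{MR3404631}. Your proposal supplies exactly the kind of self-contained argument one finds in that reference (and in~\cite{MP11}), so it is entirely in line with what the cited proof does.
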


In order to simulate solutions to the stochastic wave equation on the sphere, 
we need to approximate the driving noise which can be generated by a sequence of Gaussian random fields. We choose to truncate the above series expansion for an index $\gk \in \N$ and set 
  \begin{equation*}
   Z^\gk(y)
    = \sum_{\ell=0}^\gk \left(\sqrt{A_\ell} X^1_{\ell, 0} L_{\ell, 0}(\vt) 
	+ \sqrt{2A_\ell} \sum_{m=1}^\ell L_{\ell, m}(\vt) (X^1_{\ell, m} \cos(m\vp) + X^2_{\ell, m} \sin(m \vp))\right),
  \end{equation*}
where we recall $y= (\sin \vt \cos \vp, \sin \vt \sin \vp, \cos \vt)$ and $(\vt,\vp) \in [0,\pi]\times [0,2\pi)$.

The above lemma then allows us to present the following results 
on $L^p(\gO;L^2(\IS^2))$ convergence and $\IP$-almost sure convergence of the truncated series 
which are proven in Theorem~5.3 and Corollary~5.4 in~\cite{MR3404631}.

\begin{theorem}
	\label{thm:iGRF_Lp_conv}
Let the angular power spectrum $(A_\ell, \ell \in \N_0)$ of the centered, isotropic Gaussian random field~$Z$ 
decay algebraically with order $\ga>2$, i.\,e.,
there exist constants $C>0$ and $\ell_0 \in \N$ such that $A_\ell \le C \cdot \ell^{-\ga}$ for all $\ell > \ell_0$.
Then the series of approximate random fields $(Z^\gk, \gk \in \N)$ converges 
to the random field~$Z$ in~$L^p(\gO;L^2(\IS^2))$ for any finite $p\ge 1$, 
and the truncation error is bounded by 
  \begin{equation*}
   \|Z - Z^\gk\|_{L^p(\gO;L^2(\IS^2))}
    \le \hat{C}_p \cdot \gk^{-(\ga-2)/2}
  \end{equation*}
for $\gk > \ell_0$, where $\hat{C}_p$ is a constant depending on~$p$, $C$, and~$\ga$.

In addition, $(Z^\gk, \gk \in \N)$ converges $\IP$-almost surely and for all $\gd < (\ga-2)/2$,
the truncation error is asymptotically bounded by
  \begin{equation*}
   \|Z - Z^\gk\|_{L^2(\IS^2)} \le \gk^{-\gd},
    \quad \IP\text{-a.s.}.
  \end{equation*}
\end{theorem}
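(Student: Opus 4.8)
The plan is to reduce both claims to explicit variance computations using the orthonormality of the spherical harmonics and the given \KL expansion, and then to extract the convergence rates from the algebraic decay of the angular power spectrum.

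First I would establish the $L^p(\gO;L^2(\IS^2))$ bound. Writing $Z - Z^\gk = \sum_{\ell=\gk+1}^\infty \sum_{m=-\ell}^\ell a_{\ell,m} Y_{\ell,m}$ and using Parseval's identity for the orthonormal basis $\cY$, one has $\|Z - Z^\gk\|_{L^2(\IS^2)}^2 = \sum_{\ell=\gk+1}^\infty \sum_{m=-\ell}^\ell |a_{\ell,m}|^2$. Taking expectations and using $\E[|a_{\ell,m}|^2] = A_\ell$ together with the fact that there are $2\ell+1$ values of $m$ for each $\ell$, I obtain $\E[\|Z - Z^\gk\|_{L^2(\IS^2)}^2] = \sum_{\ell=\gk+1}^\infty (2\ell+1) A_\ell$. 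The algebraic decay $A_\ell \le C\ell^{-\ga}$ then gives a tail bound of order $\sum_{\ell>\gk} \ell^{1-\ga} \lesssim \gk^{2-\ga}$, whose square root is the claimed $\gk^{-(\ga-2)/2}$; the condition $\ga > 2$ is exactly what makes this tail summable. To pass from the second moment to the general $p$-th moment, I would invoke Gaussianity: since $Z - Z^\gk$ is a centered Gaussian random field, the Kahane--Khintchine (or hypercontractivity) inequality shows all $L^p$ norms are equivalent to the $L^2$ norm up to a $p$-dependent constant, which yields $\hat{C}_p$.

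For the $\IP$-almost sure statement, I would use a Borel--Cantelli argument. Fix $\gd < (\ga-2)/2$ and pick $\gd' \in (\gd, (\ga-2)/2)$. From the moment bound, $\E[\|Z - Z^\gk\|_{L^2(\IS^2)}^p] \le \hat{C}_p^p\, \gk^{-p(\ga-2)/2}$, so by Markov's inequality $\IP(\|Z - Z^\gk\|_{L^2(\IS^2)} > \gk^{-\gd}) \le \hat{C}_p^p\, \gk^{-p((\ga-2)/2 - \gd)}$. Choosing $p$ large enough that $p((\ga-2)/2 - \gd) > 1$ makes the right-hand side summable over $\gk$, and Borel--Cantelli then gives that almost surely $\|Z - Z^\gk\|_{L^2(\IS^2)} \le \gk^{-\gd}$ for all sufficiently large $\gk$, which is the asymptotic bound claimed.

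The main obstacle is not the analytic estimate, which is a routine tail computation, but rather handling the $p$-th moment cleanly. The cleanest route is to cite the equivalence of Gaussian moments directly rather than estimate high moments by hand; alternatively, since this is quoted as Theorem~5.3 and Corollary~5.4 of~\cite{MR3404631}, the proof may simply point to that reference, in which case the only genuine work is to record the variance identity $\E[\|Z - Z^\gk\|_{L^2(\IS^2)}^2] = \sum_{\ell>\gk}(2\ell+1)A_\ell$ and the resulting $\gk^{2-\ga}$ decay. I would also take care that the constants $\hat{C}_p$ and the threshold $\ell_0$ are tracked consistently, since the decay hypothesis only holds for $\ell > \ell_0$ and the finitely many earlier terms contribute nothing to the tail once $\gk > \ell_0$.
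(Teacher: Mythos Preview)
Your proposal is correct, and in fact you have anticipated the paper's treatment exactly: the paper does not prove this theorem at all but simply records it as Theorem~5.3 and Corollary~5.4 of~\cite{MR3404631}. The argument you sketch---Parseval to get $\E[\|Z-Z^\gk\|_{L^2(\IS^2)}^2]=\sum_{\ell>\gk}(2\ell+1)A_\ell\lesssim\gk^{2-\ga}$, equivalence of Gaussian moments for the $L^p$ bound, and Chebyshev plus Borel--Cantelli for the almost sure statement---is the standard one and matches the approach in the cited reference.
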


We follow \cite{MR3404631}, where isotropic Gaussian random fields are connected to $Q$-Wiener processes. There it is shown that an isotropic $Q$-Wiener process $(W(t), t \in \IT)$ on some finite time interval $\IT = [0,T]$ with values in~$L^2(\IS^2)$ can be represented by the expansion 
\begin{align}\label{eqW}
\begin{split}
W(t,y)
& = \sum_{\ell=0}^\infty \sum_{m=-\ell}^\ell a^{\ell, m}(t) Y_{\ell, m}(y)
\\
& = \sum_{\ell=0}^\infty \left(\sqrt{A_\ell} \gb_1^{\ell, 0}(t) Y_{\ell, 0}(y) 
+ \sqrt{2A_\ell} \sum_{m=1}^\ell (\gb_1^{\ell, m}(t) \Re Y_{\ell, m}(y) + \gb_2^{\ell, m}(t) \Im Y_{\ell, m}(y))\right)
\\
& = \sum_{\ell=0}^\infty \left( \sqrt{A_\ell} \gb_1^{\ell, 0}(t) L_{\ell, 0}(\vt) 
+ \sqrt{2A_\ell} \sum_{m=1}^\ell L_{\ell, m}(\vt) (\gb_1^{\ell, m}(t) \cos(m\vp) + \gb_2^{\ell, m}(t) \sin(m \vp))\right),
\end{split}
\end{align}
where $((\gb_1^{\ell, m}, \gb_2^{\ell, m}), \ell \in \N_0, m=0,\ldots, \ell)$ 
is a sequence of independent, real-valued Brownian motions with 
$\gb_2^{\ell, 0} = 0$ for $\ell \in \N_0$ and $t \in \IT$. 
The covariance operator~$Q$ is characterized similarly to the introduction in~\cite{LLS13} by 
\begin{align*}
Q Y_{\ell, m} 
& = A_\ell Y_{\ell, m}
\end{align*}
for $\ell \in \N_0$ and $m=-\ell,\ldots,\ell$, i.\,e., the eigenvalues of~$Q$ 
are given by the angular power spectrum $(A_\ell, \ell \in \N_0)$, 
and the eigenfunctions are the spherical harmonic functions. 

Due to the properties of Brownian motion, the above $Q$-Wiener process can be generated by increments which are isotropic Gaussian random fields with angular power spectrum $(hA_\ell, \ell \in \N_0)$ for a time step size~$h$.

\section{The stochastic wave equation on the sphere}\label{sec:sweS}
With the preparations from the preceding section at hand, we have all necessary tools to 
introduce the main subject of our study.

The \emph{stochastic wave equation on the sphere} is defined as 
\begin{equation}\label{eq:swe}
\partial_{tt}u(t)-\Delta_{\IS^2}u(t)=\dot{W}(t)
\end{equation}
with initial conditions $u(0)=v_1\in L^2(\Omega;L^2(\IS^2))$ and $\partial_{t}u(0)=v_2\in L^2(\Omega;L^2(\IS^2))$, 
where $t \in \IT = [0,T]$, $T < + \infty$. For ease of presentation, we consider the case of non-random initial conditions. The case of random initial conditions follows under appropriate integrability assumptions. The notation $\dot{W}$ stands for the formal 
derivative of the $Q$-Wiener process with series expansion~\eqref{eqW} as introduced in Section~\ref{sec:setting}. 

Denoting the velocity of the solution by $u_2 = \partial_{t} u_1 = \partial_{t} u$, 
one can rewrite \eqref{eq:swe} as 
\begin{align}\label{eq:swe2}
\diff X(t)&=AX(t)\,\diff t+G\,\diff W(t)\nonumber\\
X(0)&=X_0,
\end{align}
where 
\begin{equation*}
 A=\begin{pmatrix}0 & I \\ \Delta_{\IS^2} & 0 \end{pmatrix}, 
 \quad G=\begin{pmatrix}0\\I \end{pmatrix}, 
 \quad X=\begin{pmatrix} u_1\\u_2\end{pmatrix}, 
 \quad X_0=\begin{pmatrix} v_1\\v_2\end{pmatrix}.  
\end{equation*}
Existence of a unique mild solution of the abstract formulation~\eqref{eq:swe2} of the stochastic wave equation 
on the sphere follows from classical results on linear SPDEs, see for instance~\cite{DPZ92}, and this mild solution reads 
\begin{align*}
X(t)=\e^{tA}X_0+\int_0^t\e^{(t-s)A}G\,\diff W(s).
\end{align*}
Equivalently, the integral formulation of our problem is given by 
\begin{align}\label{eq:int}
\begin{cases}
u_1(t)&\displaystyle=v_1+\int_0^tu_2(s)\,\diff s\\
u_2(t)&\displaystyle=v_2+\int_0^t\Delta_{\IS^2}u_1(s)\,\diff s+W(t).
\end{cases}
\end{align}

Since the spherical harmonic functions~$\cY=(Y_{\ell,m}, \ell\in\N_0, m=-\ell,\ldots,\ell)$ 
form an orthonormal basis of~$L^2(\IS^2)$ and are eigenfunctions of~$\Delta_{\IS^2}$, 
we insert the following ansatz for a series expansion of the exact solution to SPDE~\eqref{eq:swe} 
\begin{align}\label{eq:ansatz}
u_1(t)=\displaystyle\sum_{\ell=0}^\infty\sum_{m=-\ell}^\ell u_1^{\ell,m}(t)Y_{\ell, m}\quad\text{and}\quad 
u_2(t)=\displaystyle\sum_{\ell=0}^\infty\sum_{m=-\ell}^\ell u_2^{\ell,m}(t)Y_{\ell, m}
\end{align}
into equation~\eqref{eq:int} and compare the coefficients in front of $Y_{\ell,m}$ to obtain the following system 
\begin{align*}
u_1^{\ell,m}(t)&=v_1^{\ell,m}+\int_0^t u_2^{\ell,m}(s)\,\diff s\\
u_2^{\ell,m}(t)&=v_2^{\ell,m}-\ell(\ell+1)\int_0^t u_1^{\ell,m}(s)\,\diff s+a^{\ell,m}(t),
\end{align*}
where $v_1^{\ell,m}$, $v_2^{\ell,m}$, resp. $a^{\ell,m}$  
are the coefficients of the expansions of the initial values $v_1$ and $v_2$, resp.\ weighted Brownian motions in the expansion of the noise~\eqref{eqW}. 

Writing the evolution of the initial values in the above linear harmonic oscillators 
with rotation matrices and using the variation of constants formula, 
one derives the following system for the coefficients of the expansions of the solution
\begin{align}\label{eq:sys}
\begin{cases}
u_1^{\ell,m}(t)&\displaystyle=\cos(t(\ell(\ell+1))^{1/2})v_1^{\ell,m}+(\ell(\ell+1))^{-1/2}\sin(t(\ell(\ell+1))^{1/2})v_2^{\ell,m}+\hat W_1^{\ell,m}(t)\\
u_2^{\ell,m}(t)&\displaystyle=-(\ell(\ell+1))^{1/2}\sin(t(\ell(\ell+1))^{1/2})v_1^{\ell,m}+\cos(t(\ell(\ell+1))^{1/2})v_2^{\ell,m}+\hat W_2^{\ell,m}(t),
\end{cases}
\end{align}
where 
\begin{equation*}
\hat W^{\ell,m}(t)
=
\begin{pmatrix}
\hat W_1^{\ell,m}(t)\\
\hat W_2^{\ell,m}(t)
\end{pmatrix}
=\displaystyle\int_0^t R^\ell(t-s) \,\diff a^{\ell,m}(s)
\end{equation*}
with
\begin{equation*}
R^\ell (t)
= \begin{pmatrix}
R^\ell_1(t) \\ R^\ell_2(t)
\end{pmatrix}
= \begin{pmatrix}
(\ell(\ell+1))^{-1/2}\sin(t(\ell(\ell+1))^{1/2})\\
\cos(t(\ell(\ell+1))^{1/2})
\end{pmatrix}
\end{equation*}
for $\ell \neq 0$ and
\begin{equation*}
\hat W^{0,0}(t)
=
\begin{pmatrix}
\hat W_1^{0,0}(t)\\
\hat W_2^{0,0}(t)
\end{pmatrix}
= \begin{pmatrix}
\displaystyle\int_0^t a^{0,0}(s) \, \diff s\\
a^{0,0}(t)
\end{pmatrix}.
\end{equation*}
We now characterize the above stochastic convolutions $\hat W_i^{\ell,m}$ for $i=1,2$.
\begin{proposition} \label{prop:covar_stoch_conv}
The stochastic convolution $\hat W(t)$ is Gaussian with mean zero and expansion
    \begin{align*}
	\hat W(t,y)
	& = \sum_{\ell=0}^\infty \sum_{m=-\ell}^\ell W^{\ell, m}(t) Y_{\ell, m}(y)
	\\
	& = \sum_{\ell=0}^\infty \left(\sqrt{A_\ell} \hat \gb_1^{\ell, 0}(t) Y_{\ell, 0}(y) 
	+ \sqrt{2A_\ell} \sum_{m=1}^\ell (\hat \gb_1^{\ell, m}(t) \Re Y_{\ell, m}(y) + \hat \gb_2^{\ell, m}(t) \Im Y_{\ell, m}(y))\right)
	\\
	& = \sum_{\ell=0}^\infty \left( \sqrt{A_\ell} \hat \gb_1^{\ell, 0}(t) L_{\ell, 0}(\vt) 
	+ \sqrt{2A_\ell} \sum_{m=1}^\ell L_{\ell, m}(\vt) (\hat \gb_1^{\ell, m}(t) \cos(m\vp) + \hat \gb_2^{\ell, m}(t) \sin(m \vp))\right),
	\end{align*}
	where equality is in distribution.
	
	The processes $(\hat \gb^{\ell,m}_i(t), i=1,2, \ell \in \N_0, m=-\ell,\ldots, \ell)$ are given by
	\begin{equation*}
	 \hat \gb^{\ell,m}_i(t)
	 	= \begin{pmatrix}
	 	 \gb_{i,1}^{\ell,m}(t)\\ \gb_{i,2}^{\ell,m}(t)
	 	\end{pmatrix}
	 	= D_\ell(t) X^{\ell,m}_i
	\end{equation*}
	for a sequence $(X^{\ell,m}_i = (X_{i,1}^{\ell,m},X_{i,2}^{\ell,m})^T, i=1,2, \ell \in \N_0, m = -\ell, \ldots, \ell)$ of independent, identically distributed random variables with $X^{\ell,m}_{i,j} \sim \cN(0,1)$.
	The term $D_\ell(t)$ denotes the Cholesky decomposition of the covariance matrix $C_\ell(t)$ of $\hat W^{\ell,m}(t)$. 
	More specifically, $D_\ell$ satisfies
	\begin{equation*}
	 D_\ell(t) D_\ell(t)^T = C_\ell(t)
	\end{equation*}
	with
	\begin{equation*}
	 C_\ell(t)
	 	= 
	 	\begin{pmatrix}
	 	\frac{2(\ell(\ell+1))^{1/2}t-\sin(2(\ell(\ell+1))^{1/2}t)}{4(\ell(\ell+1))^{3/2}} & \frac{\sin((\ell(\ell+1))^{1/2}t)^2}{2(\ell(\ell+1))}\\
	 	\frac{\sin((\ell(\ell+1))^{1/2}t)^2}{2(\ell(\ell+1))} & \frac{2(\ell(\ell+1))^{1/2}t+\sin(2(\ell(\ell+1))^{1/2}t)}{4(\ell(\ell+1))^{1/2}}
	 	\end{pmatrix}
	\end{equation*}
	for $\ell \neq 0$ and
	\begin{equation*}
	 C_0(t)
	 	= 
	 	\begin{pmatrix}
	 	t^3/3 & t^2/2\\
	 	t^2/2 & t
	 	\end{pmatrix}.
	\end{equation*}
\end{proposition}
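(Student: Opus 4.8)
The plan is to treat the statement mode by mode: for each fixed $(\ell,m)$ the quantity $\hat W^{\ell,m}(t)=\int_0^t R^\ell(t-s)\,\diff a^{\ell,m}(s)$ is a Wiener integral of the deterministic, square-integrable kernel $R^\ell$ against the driving Brownian motions appearing in \eqref{eqW}. Such integrals are Gaussian with mean zero, and finite linear combinations across modes remain Gaussian; hence $\hat W(t,y)$ is a centered Gaussian random field. First I would record that, because the stochastic convolution acts componentwise on the spherical-harmonic coefficients and \eqref{eqW} is an expansion in the orthonormal basis $\cY$, applying $R^\ell$ mode by mode to the three equivalent forms of $W$ in \eqref{eqW} yields the three stated expansions of $\hat W$, with the prefactors $\sqrt{A_\ell}$ and $\sqrt{2A_\ell}$ carried through unchanged and the real Brownian motions $\gb_i^{\ell,m}$ replaced by the convolved two-vectors $\hat\gb_i^{\ell,m}$.

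The core of the argument is the covariance computation. Writing $\gl=(\ell(\ell+1))^{1/2}$ for $\ell\neq 0$ and normalizing so that the driving motion has unit variance rate (the factors $\sqrt{A_\ell}$, $\sqrt{2A_\ell}$ having been extracted), the It\^o isometry for a vector-valued integrand gives
\begin{equation*}
C_\ell(t)=\E\left[\hat W^{\ell,m}(t)\,(\hat W^{\ell,m}(t))^T\right]=\int_0^t R^\ell(t-s)\,(R^\ell(t-s))^T\,\diff s=\int_0^t R^\ell(u)\,(R^\ell(u))^T\,\diff u,
\end{equation*}
after the substitution $u=t-s$. The three entries then reduce to elementary trigonometric integrals: $\int_0^t\gl^{-2}\sin^2(\gl u)\,\diff u$ for the $(1,1)$ entry, $\int_0^t\cos^2(\gl u)\,\diff u$ for the $(2,2)$ entry, and $\int_0^t\gl^{-1}\sin(\gl u)\cos(\gl u)\,\diff u$ for the off-diagonal entry. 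Using $\sin^2=\tfrac12(1-\cos 2\gl u)$, $\cos^2=\tfrac12(1+\cos 2\gl u)$ and $2\sin(\gl u)\cos(\gl u)=\sin(2\gl u)$, together with $1-\cos(2\gl t)=2\sin^2(\gl t)$, reproduces exactly the three entries of the stated matrix $C_\ell(t)$.

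For $\ell=0$ the convolution kernel degenerates and, as recorded in the excerpt, $\hat W^{0,0}(t)=(\int_0^t a^{0,0}(s)\,\diff s,\ a^{0,0}(t))^T$; the entries of $C_0(t)$ follow directly from $\E[\gb(s)\gb(r)]=\min(s,r)$, giving $\int_0^t\!\int_0^t\min(s,r)\,\diff s\,\diff r=t^3/3$, $\int_0^t\min(s,t)\,\diff s=t^2/2$, and $t$. Finally, each $C_\ell(t)$ is symmetric and positive semidefinite (being a covariance matrix, or by checking its nonnegative determinant and diagonal), so a lower-triangular Cholesky factor $D_\ell(t)$ with $D_\ell(t)D_\ell(t)^T=C_\ell(t)$ exists; setting $\hat\gb_i^{\ell,m}(t)=D_\ell(t)X_i^{\ell,m}$ with $X_i^{\ell,m}$ a standard bivariate normal then reproduces the correct law, since $\E[D_\ell(t)X_i^{\ell,m}(D_\ell(t)X_i^{\ell,m})^T]=D_\ell(t)D_\ell(t)^T=C_\ell(t)$.

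I expect the main obstacle to be bookkeeping rather than any single hard estimate: one must carefully match the complex coefficients $a^{\ell,m}$ of \eqref{eqW} to the real Brownian motions $\gb_1^{\ell,m},\gb_2^{\ell,m}$, confirm that the $\sqrt{A_\ell}$ and $\sqrt{2A_\ell}$ prefactors are correctly extracted so that $\hat\gb_i^{\ell,m}$ carries covariance $C_\ell(t)$ with no residual $A_\ell$, and verify the independence structure—distinct modes $(\ell,m)$ and the two indices $i=1,2$ (the cosine and sine parts) are driven by independent Brownian motions, so that the only nontrivial correlation is the within-mode coupling between the position and velocity components encoded in the off-diagonal entry of $C_\ell(t)$.
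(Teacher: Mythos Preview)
Your proposal is correct and follows essentially the same route as the paper: both arguments expand $\hat W(t)$ mode by mode via \eqref{eqW}, compute the covariance matrix $C_\ell(t)$ from the It\^o isometry as $\int_0^t R^\ell(t-s)(R^\ell(t-s))^T\,\diff s$ (with the $\ell=0$ case handled directly from the covariance of Brownian motion), and then invoke the Cholesky factorization to obtain the representation $D_\ell(t)X_i^{\ell,m}$. Your write-up is slightly more explicit about the trigonometric identities and the existence of the Cholesky factor, but there is no substantive difference in strategy.
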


\begin{proof}
We observe first that $\hat W(t)$ satisfies	by~\eqref{eqW}
\begin{align*}
\hat W(t)
 &= \sum_{\ell = 0}^\infty \sum_{m=-\ell}^\ell \hat W^{\ell,m}(t) Y_{\ell,m}\\
& =  \hat W^{0,0}(t) Y_{0,0} + \sum_{\ell = 1}^\infty \sum_{m=-\ell}^\ell \int_0^t R^\ell(t-s) \,\diff a^{\ell,m}(s) Y_{\ell,m}\\
& = \hat W^{0,0}(t) Y_{0,0} + \sum_{\ell = 1}^\infty \sqrt{A_\ell}
\left[
\int_0^t R^\ell(t-s) \, \diff \gb_1^{\ell,0}(s) Y_{\ell,0}\right.\\
& \left.\hspace{5em} + \sqrt{2} \sum_{m=1}^\ell
\left(
\int_0^t R^\ell(t-s) \, \diff \gb_1^{\ell,m}(s) \Re Y_{\ell,m}
+ \int_0^t R^\ell(t-s) \, \diff \gb_2^{\ell,m}(s) \Im Y_{\ell,m}
\right)
\right]
\end{align*}
with independent Brownian motions $(\gb_1^{\ell,m}, \ell \in \N_0, m=0,\ldots,\ell)$ 
and $(\gb_2^{\ell,m}, \ell \in \N, m=1,\ldots,\ell)$. 
Since all Brownian motions are centered and independent, 
it is sufficient to compute the following covariances which are given by
\begin{align*}
  C_0(t)
 	&=A_0^{-1} 
	\begin{pmatrix}
		\E[\hat W^{0,0}_1(t)^2] & \E[\hat W_1^{0,0}(t)\hat W_2^{0,0}(t)]\\
		\E[\hat W_1^{0,0}(t)\hat W_2^{0,0}(t)] & \E[\hat W_2^{0,0}(t)^2]
	\end{pmatrix}\\
	&=A_0^{-1} \begin{pmatrix}
	\E[(\int_0^t a^{0,0}(s) \, \diff s)^2] 
		& \E[ \int_0^t a^{0,0}(s) a^{0,0}(t) \, \diff s]\\
		\E[ \int_0^t a^{0,0}(s) a^{0,0}(t) \, \diff s]
		& \E[(a^{0,0}(t))^2]
	\end{pmatrix}
	=  
	\begin{pmatrix}
		t^3/3 & t^2/2\\
		t^2/2 & t
	\end{pmatrix}
\end{align*}
for $\ell = 0$ and else for $i=1,2$
\begin{align*}
 C_\ell(t)
 	& = 
 	\begin{pmatrix}
 	 \E[(\text{Int}_1)^2]
 	 	& \E[\text{Int}_1\text{Int}_2]\\
 	 \E[\text{Int}_1 \text{Int}_2]
 	 	& \E[(\text{Int}_2)^2]
 	\end{pmatrix}\\
 	& = 
 	\begin{pmatrix}
 	\int_0^t R^\ell_1(t-s)^2 \, \diff s
 	& \int_0^t R^\ell_1(t-s) R^\ell_2(t-s) \, \diff s\\
 	\int_0^t R^\ell_1(t-s) R^\ell_2(t-s) \, \diff s
 	& \int_0^t R^\ell_2(t-s)^2 \, \diff s
 	\end{pmatrix} \\
 	& = 
	\begin{pmatrix}
	\frac{2(\ell(\ell+1))^{1/2}t-\sin(2(\ell(\ell+1))^{1/2}t)}{4(\ell(\ell+1))^{3/2}} 
		& \frac{\sin((\ell(\ell+1))^{1/2}t)^2}{2(\ell(\ell+1))}\\
	\frac{\sin((\ell(\ell+1))^{1/2}t)^2}{2(\ell(\ell+1))} 
		& \frac{2(\ell(\ell+1))^{1/2}t+\sin(2(\ell(\ell+1))^{1/2}t)}{4(\ell(\ell+1))^{1/2}}
	\end{pmatrix},
\end{align*}
where we have set $\text{Int}_1=\int_0^t R^\ell_1(t-s) \, \diff \gb_i^{\ell,m}(s)$ and 
$\text{Int}_2=\int_0^t R^\ell_2(t-s) \, \diff \gb_i^{\ell,m}(s)$.

Setting $D_\ell(t)$ the Cholesky decomposition of the above covariance matrices satisfying
\begin{equation*}
 D_\ell(t)^T D_\ell(t)
	= C_\ell(t)
\end{equation*}
we obtain for $\ell \neq 0$ and $i=1,2$ that
\begin{equation*}
 D_\ell(t)X^{\ell,m}_i
 	= \int_0^t R^\ell(t-s) \, \diff \gb_i^{\ell,m}(s)
\end{equation*}
in distribution and similarly for $\ell = 0$
\begin{equation*}
D_\ell(t)X^{\ell,m}_1
= \hat W^{0,0}(t)
\end{equation*}
with $(X^{\ell,m}_i = (X_{i,1}^{\ell,m},X_{i,2}^{\ell,m})^T, i=1,2, \ell \in \N_0, m = -\ell, \ldots, \ell)$ 
independent and identically distributed standard normally distributed random variables.
This concludes the proof.
\end{proof}
	
\begin{remark}
	Since we are interested in the simulation of sample paths of solutions to~\eqref{eq:swe2}, we need to generate increments of $\hat W^{\ell,m}(t)$. Therefore it is important to observe that
	\begin{equation*}
	 \hat \gb^{\ell,m}_i(t) - \hat \gb^{\ell,m}_i(s)
	 	= D_\ell(t-s) X^{\ell,m}_i
	\end{equation*}
	in distribution for $s<t$. In this way we can generate sample paths of $\hat W(t)$ by sums of independent Gaussian increments.
	
For completeness we also remark that the Cholesky decomposition~$D_\ell(t)$ can be computed explicitly and is given by
\begin{equation}\label{eq:expl_Cholesky}
 D_\ell(t)
 	=
 	\begin{pmatrix}
 		 d_{1,1}
 		& d_{1,2}\\
 		0 
 		& d_{2,2}
 	\end{pmatrix}
\end{equation}
with
\begin{align*}
 d_{1,1} & = \frac{(2(\ell(\ell+1))^{1/2}t-\sin(2(\ell(\ell+1))^{1/2}t))^{1/2}}{2(\ell(\ell+1))^{3/4}}\\
 d_{1,2} & = \frac{\sin((\ell(\ell+1))^{1/2}t)^2}{(\ell(\ell+1))^{1/4}(2(\ell(\ell+1))^{1/2}t-\sin(2(\ell(\ell+1))^{1/2}t))^{1/2}}\\
 d_{2,2} & = \left(\frac{4(\ell(\ell+1))t^2 - \sin(2(\ell(\ell+1))^{1/2}t)^2 - 4\sin((\ell(\ell+1))^{1/2}t)^4}{4(\ell(\ell+1))^{1/2}(2(\ell(\ell+1))^{1/2}t-\sin(2(\ell(\ell+1))^{1/2}t))} \right)^{1/2}
\end{align*}
for $\ell \neq 0$ and
\begin{equation*}
 D_0(t)
 	= t^{1/2}
	\begin{pmatrix}
		t/\sqrt{3} & \sqrt{3}/2\\
		0 & 1/2
	\end{pmatrix}. 	
\end{equation*}
\end{remark}

We close this section by showing regularity estimates for the solution of~\eqref{eq:swe} that depend 
on the regularity of the initial conditions and the driving noise.
These properties allow to obtain optimal weak convergence rates in Section~\ref{sec:conv}.

\begin{proposition}\label{prop:regularity}
	Denote by $X=(u_1,u_2)$ the solution to the stochastic wave equation~\eqref{eq:swe2} with initial value $(v_1,v_2)$. 
	Assume that there exist $\ell_0 \in \N$, $\ga > 2$, and a constant~$C>0$ 
	such that the angular power spectrum of the driving noise $(A_\ell, \ell \in \N_0)$ satisfies 
	$A_\ell \le C \cdot \ell^{-\ga}$ for all $\ell > \ell_0$.
	Then, for all $t \in [0,T]$, $m \in \N$, and $s < \ga/2$ with $v_1 \in H^s(\IS^2)$ and $v_2 \in H^{s-1}(\IS^2)$, $u_1(t) \in L^{2m}(\gO;H^s(\IS^2))$, i.\,e., there exists a constant~$M$ such that
	\begin{equation*}
		\|u_1(t)\|_{L^{2m}(\gO;H^s(\IS^2))}
			\le M (1 + \|v_1\|_{H^s(\IS^2)} + \|v_2\|_{H^{s-1}(\IS^2)})
			< + \infty.
	\end{equation*}
	And for all $t \in [0,T]$, $m \in \N$, and $s < \ga/2 - 1$ with $v_1 \in H^{s+1}(\IS^2)$ and $v_2 \in H^s(\IS^2)$, $u_2(t) \in L^{2m}(\gO;H^s(\IS^2))$, i.\,e., there exists a constant~$M$ such that
	\begin{equation*}
	\|u_2(t)\|_{L^{2m}(\gO;H^s(\IS^2))}
	\le M (1 + \|v_1\|_{H^{s+1}(\IS^2)} + \|v_2\|_{H^s(\IS^2)})
	< + \infty.
	\end{equation*}
	
\end{proposition}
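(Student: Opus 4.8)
The plan is to exploit the diagonalization of \eqref{eq:swe} in the spherical harmonic basis. Since $(\text{Id}-\Delta_{\IS^2})Y_{\ell,m} = (1+\ell(\ell+1))Y_{\ell,m}$, the $H^s(\IS^2)$-norm of $u_i(t) = \sum_{\ell,m} u_i^{\ell,m}(t) Y_{\ell,m}$ is the weighted Parseval sum
\begin{equation*}
\|u_i(t)\|_{H^s(\IS^2)}^2 = \sum_{\ell=0}^\infty (1+\ell(\ell+1))^s \sum_{m=-\ell}^\ell |u_i^{\ell,m}(t)|^2 .
\end{equation*}
I would split each coefficient in \eqref{eq:sys} into its deterministic part, carrying $v_1^{\ell,m}$ and $v_2^{\ell,m}$, and its stochastic part $\hat W_i^{\ell,m}(t)$, and estimate the two resulting contributions to $\|u_i(t)\|_{L^{2m}(\gO;H^s(\IS^2))}$ separately by the triangle inequality.

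For the deterministic contribution I would use only that $\cos$ and $\sin$ are bounded by one, together with the elementary bounds $(1+\ell(\ell+1))^s(\ell(\ell+1))^{-1} \le C(1+\ell(\ell+1))^{s-1}$ and $(1+\ell(\ell+1))^s \ell(\ell+1) \le (1+\ell(\ell+1))^{s+1}$ for $\ell \ge 1$; the mode $\ell=0$, where $\sin(t(\ell(\ell+1))^{1/2})/(\ell(\ell+1))^{1/2}\to t$ and $(\ell(\ell+1))^{1/2}\sin(\cdot)\to 0$, is kept as a single bounded term. For $u_1$ this yields a bound by $C(\|v_1\|_{H^s(\IS^2)} + \|v_2\|_{H^{s-1}(\IS^2)})$, while for $u_2$ the factor $(\ell(\ell+1))^{1/2}$ multiplying $v_1^{\ell,m}$ costs exactly one derivative and gives a bound by $C(\|v_1\|_{H^{s+1}(\IS^2)} + \|v_2\|_{H^s(\IS^2)})$, matching the norms in the statement.

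For the stochastic contribution the key point is that, for fixed $t$, $\hat u_i(t) := \sum_{\ell,m} \hat W_i^{\ell,m}(t)Y_{\ell,m}$ is a centered Gaussian element of $H^s(\IS^2)$ (once finiteness of its second moment is checked). By the equivalence of Gaussian moments there is a constant $c_m$ depending only on $m$ with $\E[\|\hat u_i(t)\|_{H^s(\IS^2)}^{2m}] \le c_m (\E[\|\hat u_i(t)\|_{H^s(\IS^2)}^2])^m$, so it suffices to control the second moment. Using Parseval, independence of the modes, and the covariance $C_\ell(t)$ from Proposition~\ref{prop:covar_stoch_conv}, I obtain, up to a fixed constant,
\begin{equation*}
\E[\|\hat u_i(t)\|_{H^s(\IS^2)}^2] = \sum_{\ell=0}^\infty (1+\ell(\ell+1))^s (2\ell+1) A_\ell \,(C_\ell(t))_{ii}.
\end{equation*}
The explicit entries give, uniformly for $t \in [0,T]$ and $\ell \ge 1$, the asymptotics $(C_\ell(t))_{11} = \Op(\ell^{-2})$ and $(C_\ell(t))_{22} = \Op(1)$, the oscillatory $\sin$-terms being harmless and the linear-in-$t$ growth absorbed into a constant depending on $T$. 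Hence the summands behave like $\ell^{2s-\ga-1}$ for $i=1$ and like $\ell^{2s-\ga+1}$ for $i=2$, so the series converge precisely when $s < \ga/2$ and $s < \ga/2-1$, respectively. Combining the deterministic and stochastic bounds and absorbing the finite, $t$-uniform stochastic term into the additive constant yields the asserted estimates.

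The main obstacle is the bookkeeping around the covariance asymptotics: one must verify that the prefactor $(\ell(\ell+1))^{-3/2}$ in $(C_\ell(t))_{11}$ supplies exactly the extra factor $\ell^{-2}$ that separates the position regularity $s<\ga/2$ from the velocity regularity $s<\ga/2-1$, and that these decay rates hold uniformly in $t \in [0,T]$, so that the $\sin$-terms and the linear growth in $t$ are controlled by $T$ rather than simply discarded. The reduction from the $2m$-th to the second moment is standard but should be invoked so that $c_m$ is independent of $\ell$ and of any truncation level, which is what makes the final bound uniform.
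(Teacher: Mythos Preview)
Your proposal is correct and follows essentially the same route as the paper: split $u_i(t)$ into the deterministic propagation of the initial data and the stochastic convolution $\hat W_i(t)$, bound the former in $H^s(\IS^2)$ using $|\cos|\le 1$ and the $(\ell(\ell+1))^{-1/2}$ factor in $R^\ell_1$, and for the latter compute the second moment via the entries of $C_\ell(t)$ from Proposition~\ref{prop:covar_stoch_conv} to obtain summands of order $\ell^{2s-\ga-1}$ and $\ell^{2s-\ga+1}$. The paper invokes Fernique's theorem (and mentions Burkholder--Davis--Gundy) to pass from the second moment to arbitrary $L^{2m}$, which is exactly your ``equivalence of Gaussian moments'' step.
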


\begin{proof}
	Let us first observe that
	\begin{align*}
	 & \|u_1(t)\|_{L^{2m}(\gO;H^s(\IS^2))}\\
		& \hspace*{3em} \leq \left\|\sum_{\ell=0}^\infty\sum_{m=-\ell}^\ell\left( R^\ell_2(t) v_1^{\ell,m}
	 	+R^\ell_1(t) v_2^{\ell,m}\right)Y_{\ell,m}\right\|_{L^{2m}(\gO;H^s(\IS^2))}
	 	+\left\|\hat{W}(t)\right\|_{L^{2m}(\gO;H^s(\IS^2))}.\\
	\end{align*}
	
	The first term with respect to the initial conditions satisfies
	\begin{align*}
	 & \left\|\sum_{\ell=1}^\infty\sum_{m=-\ell}^\ell\left( R^\ell_2(t) v_1^{\ell,m}
	 +R^\ell_1(t) v_2^{\ell,m}\right)Y_{\ell,m}\right\|_{H^s(\IS^2)}^2\\
	 & \qquad \le 2 \sum_{\ell=1}^\infty\sum_{m=-\ell}^\ell \left(
	 	(1+\ell(\ell+1))^s |v_1^{\ell,m}|^2 + (1+\ell(\ell+1))^s (\ell(\ell+1))^{-1} |v_2^{\ell,m}|^2
	 \right)\\
	 & \qquad \le C (\|v_1\|_{H^s(\IS^2)}^2 + \|v_2\|_{H^{s-1}(\IS^2)}^2).
	\end{align*} 

	Given the angular power spectrum of~$\hat{W}(t)$ in Proposition~\ref{prop:covar_stoch_conv}, it follows 
	for the second moment, i.\,e. $m=1$, that
	\begin{align*}
		\left\|\hat{W}(t)\right\|_{L^{2}(\gO;H^s(\IS^2))}^2
			& = \left\|(\text{Id}-\Delta_{\IS^2})^{s/2} \hat{W}(t)\right\|_{L^{2}(\gO;L^2(\IS^2))}^2\\
			& = \sum_{\ell = 0}^\infty (2\ell+1)(1+\ell(\ell+1))^s \frac{2(\ell(\ell+1))^{1/2}t-\sin(2(\ell(\ell+1))^{1/2}t)}{4(\ell(\ell+1))^{3/2}}A_\ell,
	\end{align*}
which converges for $\ga > 2s$ since the elements of the sum behave like $\ell^{2s-\ga-1}$. 
By Fernique's theorem~\cite{F70}, this convergence implies that 
the norm is finite for all~$m$ and arbitrary moment bounds can for example be obtained 
by the Burkholder--Davis--Gundy inequality.

Similar computations for $u_2$ conclude the proof.
\end{proof}
\section{Convergence analysis}\label{sec:conv}
In this section, we numerically solve the wave equation on the sphere driven by additive $Q$-Wiener noise  
with spectral methods. We approximate the
solution by truncation of the derived spectral representation and 
show convergence rates in $p$-th moment, $\IP$-almost surely, and in the weak sense. 

An efficient simulation of numerical approximations to 
solutions to the stochastic wave equation on the sphere~\eqref{eq:swe} 
is then obtained via Algorithm~\ref{algo}.
\begin{algorithm}
\caption{Simulations of paths of the solution to~\eqref{eq:swe}}
\label{algo}
\begin{algorithmic}[1]
\STATE Fix a truncation index $\gk\in\N$.
\STATE Compute a discrete time grid $0=t_0<t_1<\ldots<t_n=T$, $n\in\N$, with time step $h$.
\STATE Compute the covariance matrix~$C_\ell(h)$ of the stochastic integrals 
$\hat W_1^{\ell,m}(h)$ and $\hat W_2^{\ell,m}(h)$ in Proposition~\ref{prop:covar_stoch_conv}. 
\STATE Perform a Cholesky decomposition of $C_\ell(h)=D_\ell(h)^TD_\ell(h)$ or use 
the explicit formula~\eqref{eq:expl_Cholesky}.
\STATE Use $D_\ell(h)$ to generate noise increments 
$$
\hat W^{\ell,m}(h)
=\begin{pmatrix}
\hat W_1^{\ell,m}(h)\\
\hat W_2^{\ell,m}(h)
\end{pmatrix}
=D_\ell
\begin{pmatrix}
\beta_1^{\ell,m}(h)\\
\beta_2^{\ell,m}(h)
\end{pmatrix}
\sqrt{A_\ell}.
$$
\STATE Compute $u_1^{\ell,m}(t_j+h)$ and $u_2^{\ell,m}(t_j+h)$ recursively using \eqref{eq:sys}
\begin{align*}
 u^{\ell,m}(t_{j+1})
 	= 
 	\begin{pmatrix}
 		R_2^\ell(h) \\ - \ell(\ell+1) R_1^\ell(h)
 	\end{pmatrix}
 	u_1^{\ell,m}(t_j)
 	+ R^\ell(h) u_2^{\ell,m}(t_j)
 	+ \hat W^{\ell,m}(h).
\end{align*}
\STATE Truncate the ansatz \eqref{eq:ansatz} at the fixed positive integer $\gk$ to get the numerical approximations
\begin{align}\label{eq:ansatzK}
u_1^\kappa(t_j)=\displaystyle\sum_{\ell=0}^\gk\sum_{m=-\ell}^\ell u_1^{\ell,m}(t_j)Y_{\ell, m}\quad\text{and}\quad 
u_2^\kappa(t_j)=\displaystyle\sum_{\ell=0}^\gk\sum_{m=-\ell}^\ell u_2^{\ell,m}(t_j)Y_{\ell, m}.
\end{align}
\end{algorithmic}
\end{algorithm}
The strong errors of this truncation procedure are given in the following proposition.

\begin{proposition}\label{prop:error} 
Let $t \in \IT$ and $0=t_0 < \cdots < t_n = t$ be a discrete time partition 
for $n \in \N$, which yields a recursive representation of the solution~$X=(u_1,u_2)$ of 
the stochastic wave equation on the sphere~\eqref{eq:swe2} given by \eqref{eq:ansatz}. 
Assume that the initial values satisfy $v_1\in H^\beta(\IS^2)$ and $v_2\in H^\gamma(\IS^2)$. 
Furthermore, assume that there exist $\ell_0 \in \N$, $\ga > 2$, and a constant~$C>0$ 
such that the angular power spectrum of the driving noise $(A_\ell, \ell \in \N_0)$ satisfies 
$A_\ell \le C \cdot \ell^{-\ga}$ for all $\ell > \ell_0$. 
Then, the error of the approximate solution $X^\gk=(u_1^\gk,u_2^\gk)$, given by \eqref{eq:ansatzK}, 
is bounded uniformly in time and independently of the time discretization by
\begin{align*}
\|u_1(t) - u_1^\gk(t)\|_{L^p(\gO;L^2(\IS^2))} &\le \hat{C}_p \cdot \left( \gk^{-\ga/2}+\gk^{-\beta}\|v_1\|_{H^\beta(\IS^2)}+
\gk^{-(\gamma+1)}\|v_2\|_{H^\gamma(\IS^2)} \right)\\
\|u_2(t) - u_2^\gk(t)\|_{L^p(\gO;L^2(\IS^2))} &\le \hat{C}_p \cdot \left( \gk^{-(\ga/2-1)}+\gk^{-(\beta-1)}\|v_1\|_{H^\beta(\IS^2)}+
\gk^{-\gamma}\|v_2\|_{H^\gamma(\IS^2)} \right)
\end{align*}
for all $p\geq1$ and $\gk > \ell_0$, where $\hat{C}_p$ is a constant that may depend on 
$p$, $C$, $T$, and $\ga$.

On top of that, the error of the approximate solution $X^\gk$ is bounded uniformly in time, 
independently of the time discretization, and asymptotically in~$\gk$ by
\begin{align*}
\|u_1(t) - u_1^\gk(t)\|_{L^2(\IS^2)} &\le \gk^{-\delta}, \quad \IP\text{-a.s.} \\
\|u_2(t) - u_2^\gk(t)\|_{L^2(\IS^2)} &\le \gk^{-(\delta-1)}, \quad \IP\text{-a.s.} 
\end{align*}
for all $\delta < \min(\ga/2,\beta,\gamma+1)$.
\end{proposition}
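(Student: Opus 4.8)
The plan is to exploit that, because the spherical harmonics form an orthonormal basis and the truncation \eqref{eq:ansatzK} simply discards every mode with $\ell>\gk$, each error is exactly the tail of the corresponding series. Using the modal solution \eqref{eq:sys}, I would write
\[
u_1(t) - u_1^\gk(t) = \sum_{\ell=\gk+1}^\infty \sum_{m=-\ell}^\ell u_1^{\ell,m}(t)\, Y_{\ell,m},
\]
and split $u_1^{\ell,m}(t)$ into its three constituents from \eqref{eq:sys}: the $v_1$-term carrying $R^\ell_2(t)=\cos(t(\ell(\ell+1))^{1/2})$, the $v_2$-term carrying $R^\ell_1(t)=(\ell(\ell+1))^{-1/2}\sin(t(\ell(\ell+1))^{1/2})$, and the stochastic convolution $\hat W_1^{\ell,m}(t)$. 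By the triangle inequality in $L^p(\gO;L^2(\IS^2))$ it then suffices to bound these three tails one by one, and the identical scheme handles $u_2$ after replacing $R^\ell_2(t)v_1$ by $-(\ell(\ell+1))^{1/2}\sin(t(\ell(\ell+1))^{1/2})v_1$, $R^\ell_1(t)v_2$ by $\cos(t(\ell(\ell+1))^{1/2})v_2$, and $\hat W_1$ by $\hat W_2$.

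For the two deterministic initial-value tails I would use Parseval together with the crude bounds $|\cos|\le1$ and $|\sin|\le1$. Since $\|v_1\|_{H^\beta(\IS^2)}^2=\sum_{\ell,m}(1+\ell(\ell+1))^\beta|v_1^{\ell,m}|^2$, factoring out the smallest available weight $(1+\gk(\gk+1))^{-\beta}$ on the range $\ell>\gk$ and bounding $\cos^2\le1$ gives the rate $\gk^{-\beta}\|v_1\|_{H^\beta(\IS^2)}$; the additional factor $(\ell(\ell+1))^{-1}$ coming from $R^\ell_1$ upgrades the weight to $(1+\ell(\ell+1))^{-(\gamma+1)}$ and yields $\gk^{-(\gamma+1)}\|v_2\|_{H^\gamma(\IS^2)}$. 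The same computation for $u_2$ acquires a factor $\ell(\ell+1)$ in front of the $v_1$-term, costing one power of $\gk$ and producing $\gk^{-(\beta-1)}$, while the $v_2$-term now appears with a bare cosine and gives $\gk^{-\gamma}$. All of these bounds are deterministic, so they hold in every $L^p(\gO;\cdot)$ and $\IP$-almost surely.

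For the stochastic tails I would first compute the second moment. Since the coefficients $\hat W_i^{\ell,m}(t)$ are centred and independent across $(\ell,m)$, Parseval and Proposition~\ref{prop:covar_stoch_conv} give
\[
\E\Bigl[\bigl\|{\textstyle\sum_{\ell>\gk}\sum_m}\hat W_1^{\ell,m}(t)Y_{\ell,m}\bigr\|_{L^2(\IS^2)}^2\Bigr]=\sum_{\ell=\gk+1}^\infty(2\ell+1)\,A_\ell\,(C_\ell(t))_{11},
\]
and likewise with $(C_\ell(t))_{22}$ for $u_2$. The decisive point is the large-$\ell$ behaviour of the diagonal entries of $C_\ell(t)$: bounding the oscillatory terms by $|\sin|\le1$ shows, uniformly in $t\in[0,T]$, that $(C_\ell(t))_{11}$ is of order $T\ell^{-2}$ whereas $(C_\ell(t))_{22}$ stays bounded by a constant times $T$. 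Inserting $A_\ell\le C\ell^{-\ga}$, the $u_1$-summand decays like $\ell^{-1-\ga}$ and its tail is of order $\gk^{-\ga}$, giving the rate $\gk^{-\ga/2}$, whereas the $u_2$-summand decays only like $\ell^{1-\ga}$, with tail of order $\gk^{2-\ga}$ (finite precisely because $\ga>2$), giving $\gk^{-(\ga/2-1)}$. This missing $\ell^{-2}$ in the velocity covariance is exactly the mechanism behind the full order gap between position and velocity.

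To pass from $p=2$ to general $p\ge1$, I would use that each stochastic tail is a centred Gaussian element of $L^2(\IS^2)$, so all its moments are equivalent and $\|\cdot\|_{L^p(\gO;L^2(\IS^2))}\le\hat{C}_p\|\cdot\|_{L^2(\gO;L^2(\IS^2))}$ (equivalently via the Burkholder--Davis--Gundy inequality and Fernique's theorem, as already used in Proposition~\ref{prop:regularity}); combining the three tails by the triangle inequality yields the stated $L^p$ bounds. The $\IP$-almost sure rates then follow from the $L^p$ rates precisely as in Theorem~\ref{thm:iGRF_Lp_conv}: for any $\gd<\min(\ga/2,\beta,\gamma+1)$, Chebyshev's inequality bounds $\IP(\|u_1(t)-u_1^\gk(t)\|_{L^2(\IS^2)}>\gk^{-\gd})$ by $\gk^{p\gd}\E[\|u_1(t)-u_1^\gk(t)\|_{L^2(\IS^2)}^p]$, which becomes summable in $\gk$ once $p$ is chosen large enough, so Borel--Cantelli gives the claim (and $\gd\mapsto\gd-1$ for $u_2$). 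I expect the main obstacle to be the uniform-in-time asymptotics of the diagonal entries of $C_\ell(t)$, and in particular correctly capturing the extra $\ell^{-2}$ decay of $(C_\ell(t))_{11}$ against the merely bounded $(C_\ell(t))_{22}$, since this is what both separates the position and velocity rates and must be reconciled with the initial-value rates when the minimum is taken in the almost-sure statement.
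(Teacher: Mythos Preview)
Your proposal is correct and follows essentially the same route as the paper: split the tail via \eqref{eq:sys} into the two initial-value contributions and the stochastic convolution, bound the former by Parseval with Sobolev weights, bound the latter via the variance entries of $C_\ell(t)$ from Proposition~\ref{prop:covar_stoch_conv}, and finish with Chebyshev plus Borel--Cantelli. The only cosmetic difference is that the paper packages the stochastic tail as the truncation error of an auxiliary isotropic Gaussian random field with angular power spectrum $\widetilde A_\ell=(C_\ell(t))_{11}A_\ell\le CT\ell^{-(\ga+2)}$ and then invokes Theorem~\ref{thm:iGRF_Lp_conv} directly (which already contains both the second-moment estimate and the upgrade to general~$p$), whereas you perform these two steps by hand.
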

\begin{remark}\label{rem1}
We remark that it is not necessary that the angular power spectrum 
$(A_\ell, \ell \in \N_0)$ of the $Q$-Wiener process decays with rate
$\ell^{-\ga}$ for $\ga >2$ 
but that it is sufficient to assume that $\ga > 0$ to show convergence in the first component, 
see the numerical experiment in Section~\ref{sec:num}. I.\,e., we do not require that $Q$ is a trace class operator for convergence in the first component.
\end{remark}
\begin{proof}[Proof of Proposition~\ref{prop:error}]
Let us first consider the convergence in $p$-th moment of the first component of the solution for $p\geq1$. 
By definition of $u_1$ and $u_1^\gk$ in~\eqref{eq:ansatz} and~\eqref{eq:ansatzK}, one obtains
\begin{align*}
&\left\|u_1(t)-u_1^\gk(t)\right\|_{L^p(\gO;L^2(\IS^2))}\\
	&\qquad = \left\|\sum_{\ell=\gk+1}^\infty\sum_{m=-\ell}^\ell u_1^{\ell,m}(t)Y_{\ell,m}\right\|_{L^p(\gO;L^2(\IS^2))}\\
	&\qquad \leq \left\|\sum_{\ell=\gk+1}^\infty\sum_{m=-\ell}^\ell\left( R^\ell_2(t) v_1^{\ell,m}
		+R^\ell_1(t) v_2^{\ell,m}\right)Y_{\ell,m}\right\|_{L^p(\gO;L^2(\IS^2))}\\
	&\qquad \hspace*{3em}+\left\|\sum_{\ell=\gk+1}^\infty\sum_{m=-\ell}^\ell\int_0^t
R^\ell_1(t-s)\,\diff a^{\ell,m}(s)Y_{\ell,m}\right\|_{L^p(\gO;L^2(\IS^2))}\\
	&\qquad\leq \|v_1 - v_1^\gk\|_{L^2(\IS^2)} + \left(\sum_{\ell=\gk+1}^\infty\sum_{m=-\ell}^\ell(\ell(\ell+1))^{-1}|v_2^{\ell,m}|^2\right)^{1/2}
		+ \|Z-Z^\gk\|_{L^p(\gO;L^2(\IS^2))}, 
\end{align*}
where $Z$ denotes an isotropic Gaussian random field with angular power spectrum 
$$
\widetilde A_\ell=\frac{2(\ell(\ell+1))^{1/2}t-\sin(2(\ell(\ell+1))^{1/2}t)}{4(\ell(\ell+1))^{3/2}}A_\ell
$$
and $Z^\gk$ its truncation.
Observe that 
\begin{equation*}
 \widetilde A_\ell
 \leq C\left(\frac{t}{2\ell(\ell+1)}+\frac{|\sin(2(\ell(\ell+1))^{1/2}t)|}{4(\ell(\ell+1))^{3/2}}\right)A_\ell
 \leq C T \ell^{-2}A_\ell
 \leq C T \ell^{-(\alpha+2)}
\end{equation*}
for large enough index $\ell > \ell_0 \ge 0$ 
using the assumption on the angular power spectrum of the noise $A_\ell$. 
The assumptions on the initial values and the fact that the spherical harmonic functions are orthonormal 
provide us with the estimate 
\begin{align*}
\left\|v_1 - v_1^\gk\right\|_{L^2(\IS^2)}
	&=\left\| \sum_{\ell=\gk+1}^\infty\sum_{m=-\ell}^\ell v_1^{\ell,m}Y_{\ell,m}\right\|_{L^2(\IS^2)} \\
	&=\left\| \sum_{\ell=\gk+1}^\infty\sum_{m=-\ell}^\ell v_1^{\ell,m}(\text{Id}-\Delta_{\IS^2})^{\beta/2}
(\text{Id}-\Delta_{\IS^2})^{-\beta/2}Y_{\ell,m}\right\|_{L^2(\IS^2)}\\
	&=\left\| \sum_{\ell=\gk+1}^\infty\sum_{m=-\ell}^\ell v_1^{\ell,m}(1+\ell(\ell+1))^{-\beta/2}(\text{Id}-\Delta_{\IS^2})^{\beta/2}
Y_{\ell,m}\right\|_{L^2(\IS^2)}\\
	&\leq C \gk^{-\gb}\left\| v_1\right\|_{H^\beta(\IS^2)}
\end{align*}
and similarly for the second component of the initial value. 

Collecting all the estimates above and using Theorem~\ref{thm:iGRF_Lp_conv} we obtain the desired bound
$$
\|u_1(t)-u_1^\gk(t)\|_{L^p(\gO;L^2(\IS^2))}
	\leq \hat C_p\cdot\left(\gk^{-\ga/2}+\gk^{-\beta}\|v_1\|_{H^\beta(\IS^2)}
		+ \gk^{-(\gamma+1)}\|v_2\|_{H^\gamma(\IS^2)}\right).
$$
The corresponding estimate for the second component is done in a similar way and left to the reader. Observe that 
the rate of convergence decays by one due to the factor $(\ell(\ell+1))^{1/2}$ in the first 
term of~\eqref{eq:sys}.

We continue with the rate of the almost sure convergence in the first component of the solution. 
Let $\delta < \min(\ga/2,\beta,\gamma+1)$. The above strong $L^p$ error estimate combined with Chebyshev's 
inequality provide us with 
 \begin{align*}
   \IP\left(\|u_1(t)-u_1^\gk(t)\|_{L^2(\IS^2)} \ge \gk^{-\delta}\right)
    &\le \gk^{\delta p} \E\left[\|u_1(t)-u_1^\gk(t)\|_{L^2(\IS^2)}^p\right]\\
    &\le \gk^{\delta p}\hat C_p^p\left(\gk^{-\ga/2}+\gk^{-\beta}\|v_1\|_{H^\beta(\IS^2)}+
\gk^{-(\gamma+1)}\|v_2\|_{H^\gamma(\IS^2)}\right)^p.
  \end{align*}
 For all $p > \max( (\ga/2-\delta)^{-1}, (\beta-\delta)^{-1}, (\gamma+1-\delta)^{-1} )$, the series
  \begin{equation*}
   \sum_{\gk=1}^\infty \gk^{(\delta-\min(\ga/2,\beta,\gamma+1))p}
    < + \infty
  \end{equation*}
 converges which implies the claim by the Borel--Cantelli lemma. Almost sure convergence of the second component is shown in a similar way which concludes the proof. 
\end{proof}
Using Proposition~\ref{prop:error}, we continue with bounding weak errors of the mean and second moment in a first step. 
We observe that the weak error for the mean is the error to the corresponding deterministic wave equation on~$\IS^2$ and 
that the error for the second moment satisfies the rule of thumb that the weak convergence rate is twice the strong 
convergence rate.
\begin{proposition}\label{prop:weak}
Let $t \in \IT$ and $0=t_0 < \cdots < t_n = t$ be a discrete time partition 
for $n \in \N$ which yields a recursive representation of the solution~$X=(u_1,u_2)$ of 
the stochastic wave equation on the sphere~\eqref{eq:swe2} given by~\eqref{eq:ansatz}. 
Assume that the initial values satisfy $v_1\in H^\beta(\IS^2)$ and $v_2\in H^\gamma(\IS^2)$. 
Furthermore, assume that there exist $\ell_0 \in \N$, $\ga > 2$, and a constant~$C>0$ 
such that the angular power spectrum of the driving noise $(A_\ell, \ell \in \N_0)$ satisfies 
$A_\ell \le C \cdot \ell^{-\ga}$ for all $\ell > \ell_0$. 

Then, the errors in mean of the approximate solution $X^\gk=(u_1^\gk,u_2^\gk)$, given by~\eqref{eq:ansatzK}, 
are bounded uniformly in time and independently of the time discretization by
\begin{align*}
\left\|\E\left[u_1(t) - u_1^\gk(t)\right]\right\|_{L^2(\IS^2)} 
	&\le \hat{C} \cdot \left( \gk^{-\beta}\|v_1\|_{H^\beta(\IS^2)}
		+ \gk^{-(\gamma+1)}\|v_2\|_{H^\gamma(\IS^2)} \right)\\
\left\|\E\left[u_2(t) - u_2^\gk(t)\right]\right\|_{L^2(\IS^2)} 
	&\le \hat{C} \cdot \left( \gk^{-(\beta-1)}\|v_1\|_{H^\beta(\IS^2)}
		+\gk^{-\gamma}\|v_2\|_{H^\gamma(\IS^2)} \right)
\end{align*}
for all $\gk > \ell_0$, where $\hat{C}$ is a constant that may depend on $C$, $T$, and $\ga$.

Furthermore, the errors of the second moment are bounded by 
\begin{align*}
\left| \E\left[ \|u_1(t)\|^2_{L^2(\IS^2)} - \|u_1^\gk(t)\|^2_{L^2(\IS^2)}\right] \right| 
	&\le \hat{C} \cdot \left( \gk^{-\ga}+\gk^{-2\beta}\|v_1\|_{H^\beta(\IS^2)}
			+ \gk^{-2(\gamma+1)}\|v_2\|_{H^\gamma(\IS^2)} \right)\\
\left| \E\left[ \|u_2(t)\|^2_{L^2(\IS^2)} - \|u_2^\gk(t)\|^2_{L^2(\IS^2)}\right] \right|
	&\le \hat{C} \cdot \left( \gk^{-(\ga-2)}+\gk^{-2(\beta-1)}\|v_1\|_{H^\beta(\IS^2)}
		+ \gk^{-2\gamma}\|v_2\|_{H^\gamma(\IS^2)} \right)
\end{align*}
for all $\gk > \ell_0$, where $\hat{C}$ is a constant that may depend on $C$, $T$, and $\ga$.
\end{proposition}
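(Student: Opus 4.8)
The plan is to handle the first and second moments by different mechanisms, in both cases reducing everything to the strong bound of Proposition~\ref{prop:error} and to the Sobolev tail estimates already used in its proof. For the errors in mean I would start from the modewise representation~\eqref{eq:sys}, in which $u_1^{\ell,m}(t)$ is the sum of the deterministic term $R_2^\ell(t)v_1^{\ell,m}+R_1^\ell(t)v_2^{\ell,m}$ and the centered stochastic convolution $\hat W_1^{\ell,m}(t)$. Since $\hat W(t)$ is Gaussian with mean zero by Proposition~\ref{prop:covar_stoch_conv}, taking the expectation kills the stochastic part, so that $\E[u_1(t)-u_1^\gk(t)]=\sum_{\ell>\gk}\sum_{m=-\ell}^\ell(R_2^\ell(t)v_1^{\ell,m}+R_1^\ell(t)v_2^{\ell,m})Y_{\ell,m}$ is exactly the truncation tail of the deterministic wave equation. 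Its $L^2(\IS^2)$ norm is then controlled by Parseval together with $|R_2^\ell(t)|\le 1$ and $|R_1^\ell(t)|\le(\ell(\ell+1))^{-1/2}$ and the Sobolev tail bound from the proof of Proposition~\ref{prop:error}, giving the rates $\gk^{-\beta}$ and $\gk^{-(\gamma+1)}$; the $u_2$ mean error is identical, with the extra factor $(\ell(\ell+1))^{1/2}$ in the first row of~\eqref{eq:sys} lowering each exponent by one.

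For the second moment the decisive observation is that, because~\eqref{eq:sys} furnishes the exact modal coefficients $u_1^{\ell,m}(t)$, the truncation $u_1^\gk(t)$ carries precisely these coefficients for $\ell\le\gk$, so $u_1(t)-u_1^\gk(t)$ consists of the orthogonal complementary modes $\ell>\gk$. Hence the Pythagorean identity in $L^2(\IS^2)$ gives $\|u_1(t)\|^2_{L^2(\IS^2)}-\|u_1^\gk(t)\|^2_{L^2(\IS^2)}=\|u_1(t)-u_1^\gk(t)\|^2_{L^2(\IS^2)}$ pathwise. Taking expectations turns the second-moment error into the squared strong error, $\E[\|u_1(t)\|^2_{L^2(\IS^2)}-\|u_1^\gk(t)\|^2_{L^2(\IS^2)}]=\|u_1(t)-u_1^\gk(t)\|^2_{L^2(\gO;L^2(\IS^2))}$, which is nonnegative so the absolute value disappears. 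Applying Proposition~\ref{prop:error} with $p=2$ and squaring its three-term bound then yields the doubled rates $\gk^{-\ga}$, $\gk^{-2\beta}$, $\gk^{-2(\gamma+1)}$, which is exactly the announced rule that the weak rate is twice the strong rate; the $u_2$ estimate follows verbatim from the $u_2$ line of Proposition~\ref{prop:error}.

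The only point that needs care is the bookkeeping of the cross terms generated when the three-term strong bound is squared; I would dispose of them either by Young's inequality $2xy\le x^2+y^2$, absorbing each mixed term into the three displayed squares, or, more transparently, by evaluating $\E[\|u_1(t)-u_1^\gk(t)\|^2_{L^2(\IS^2)}]$ directly mode by mode. In the latter route the centeredness and independence of $\hat W_1^{\ell,m}(t)$ make $\E[|u_1^{\ell,m}(t)|^2]$ split without a mixed term into $|R_2^\ell(t)v_1^{\ell,m}+R_1^\ell(t)v_2^{\ell,m}|^2$ plus the variance $\widetilde A_\ell$; summing the variance over the tail via $\sum_{\ell>\gk}(2\ell+1)\widetilde A_\ell\le C\sum_{\ell>\gk}\ell^{-(\ga+1)}\le C'\gk^{-\ga}$, using $\widetilde A_\ell\le CT\ell^{-(\ga+2)}$ from the proof of Proposition~\ref{prop:error}, produces the noise-driven rate, while the deterministic sum reproduces the initial-value rates exactly as in the mean computation. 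Since every summand is manifestly nonnegative there is no cancellation to monitor, which is what makes the direct route cleaner than squaring, and the sole mildly tedious ingredient—the decay $\widetilde A_\ell\sim\ell^{-(\ga+2)}$—is already established earlier, so no genuinely new obstacle arises.
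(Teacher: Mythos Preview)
Your proposal is correct and follows essentially the same approach as the paper: for the mean you take expectations in~\eqref{eq:sys}, use that $\hat W$ is centered, and reduce to the initial-value tail estimates from the proof of Proposition~\ref{prop:error}; for the second moment you reduce to the squared strong error via orthogonality and then invoke Proposition~\ref{prop:error}. The only cosmetic difference is that the paper reaches $\E[\|u_j\|^2-\|u_j^\gk\|^2]=\E[\|u_j-u_j^\gk\|^2]$ by expanding $\langle u_j+u_j^\gk,u_j-u_j^\gk\rangle$ and invoking orthogonality of the spherical harmonics, whereas you invoke the Pythagorean identity directly---these are the same computation.
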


\begin{proof}
The definition of~$X$ and its approximation~$X^\gk$ yield
$$
\E\left[ u_j(t)-u_j^\gk(t) \right]=\sum_{\ell=0}^\infty\sum_{m=-\ell}^\ell\E\left[u_j^{\ell,m}(t)\right]Y_{\ell,m}
$$
for $j=1,2$. Next, using~\eqref{eq:sys} and the properties of $\hat W_1^{\ell,m}(t)$ from Proposition~\ref{prop:covar_stoch_conv}, one obtains
$$
\E\left[ u_1^{\ell,m}(t)\right]=\cos(t(\ell(\ell+1))^{1/2})\E\left[v_1^{\ell,m}\right]
+(\ell(\ell+1))^{-1/2}\sin(t(\ell(\ell+1))^{1/2})\E\left[v_2^{\ell,m}\right]
$$
and similarly for the second component. 
This corresponds to the errors in the initial values, see the proof of Proposition~\ref{prop:error}, and we thus obtain the error bound
$$
\|\E\left[u_1(t) - u_1^\gk(t)\right]\|_{L^2(\IS^2)} 
	\le \hat{C} \cdot \left( \gk^{-\beta}\|v_1\|_{H^\beta(\IS^2)}
		+ \gk^{-(\gamma+1)}\|v_2\|_{H^\gamma(\IS^2)} \right)
$$
and correspondingly for the second component. 

In order to bound the second moments, we observe that
\begin{align*}
&\E\left[ \|u_j(t)\|^2_{L^2(\IS^2)} - \|u_j^\gk(t)\|^2_{L^2(\IS^2)}\right] =
\E\left[\langle u_j(t)+u_j^\gk(t),u_j(t)-u_j^\gk(t)\rangle_{L^2(\IS^2)} \right] \\ 
&=
\E\left[\left\langle 2\sum_{\ell=0}^\gk\sum_{m=-\ell}^\ell u_j^{\ell,m}(t)Y_{\ell,m}+
\sum_{\ell=\gk+1}^\infty\sum_{m=-\ell}^\ell u_j^{\ell,m}(t)Y_{\ell,m},
\sum_{\ell=\gk+1}^\infty\sum_{m=-\ell}^\ell u_j^{\ell,m}(t)Y_{\ell,m} \right\rangle_{L^2(\IS^2)}\right]\\
&=2\E\left[\sum_{\ell=0}^\gk\sum_{m=-\ell}^\ell\sum_{\ell'=\gk+1}^\infty\sum_{m'=-\ell'}^{\ell'} 
u_j^{\ell,m}(t)u_j^{\ell',m'}(t) \langle Y_{\ell,m}, Y_{\ell',m'}\rangle_{L^2(\IS^2)}\right]
+\E\left[\|u_j(t)-u_j^\gk(t)\|^2_{L^2(\IS^2)}\right],
\end{align*}
for $j=1,2$. Using the orthogonality of the spherical harmonics~$\cY$, the first term vanishes and the second one is bounded by the square of the strong error in Proposition~\ref{prop:error}. This yields
$$
\E\left[ \|u_1(t)\|^2_{L^2(\IS^2)} - \|u_1^\gk(t)\|^2_{L^2(\IS^2)}\right] \le 
\hat{C} \cdot \left( \gk^{-\ga}+\gk^{-2\beta}\|v_1\|_{H^\beta(\IS^2)}+
\gk^{-2(\gamma+1)}\|v_2\|_{H^\gamma(\IS^2)} \right)
$$
and similarly for the second component. 
\end{proof}

For a more general class of test functions, we obtain weak error rates that depend directly on the regularity of the test function and indirectly on the regularity of the solution. Let us first state the abstract assumption on the test functions that will be required for the next weak convergence result.

\begin{assumption}\label{ass:test_functions}
	Consider the class of Fr\'echet differentiable test functions $\varphi$ satisfying for some fixed $s>0$
	\begin{equation*}
		\|\int_0^1\varphi'(\rho u_j(t)+(1-\rho)u_j^\gk(t))\,\diff\rho\|_{L^2(\Omega;H^s(\IS^2))}
			\le \widetilde{C}
			< + \infty
	\end{equation*}
	for $j=1,2$.
\end{assumption}
A typical example of a set of test functions satisfying the above assumption 
would be polynomial growth of the derivative in~$H^s(\IS^2)$, i.\,e., to take~$\varphi$ such that for all $x \in H^s(\IS^2)$
\begin{equation}\label{eq:poly_growth}
\|\varphi'(x)\|_{H^s(\IS^2)}
\le C \left(1 + \|x\|_{H^s(\IS^2)}^m\right).
\end{equation}
Then we observe that
\begin{equation*}
 \rho u_j(t)+(1-\rho)u_j^\gk(t)
 	= \sum_{\ell=0}^\gk \sum_{m=-\ell}^\ell u_j^{\ell,m}(t)Y_{\ell, m}
 		+ \rho \sum_{\ell=\gk+1}^\infty\sum_{m=-\ell}^\ell u_1^{\ell,m}(t)Y_{\ell, m}.
\end{equation*}
This would imply for $\rho \in [0,1]$
\begin{align*}
 & \|\varphi'(\rho u_j(t)+(1-\rho)u_j^\gk(t))\|_{L^2(\Omega;H^s(\IS^2))}^2\\
 	& \le C^2 \E \left[ \left( 1 + \|\rho u_j(t)+(1-\rho)u_j^\gk(t)\|_{H^s(\IS^2)}^m \right)^2 \right]\\
 	& \le 2 C^2 \left( 1 + 
 		\E \left[ \left( 
 			\left\|\sum_{\ell=0}^\gk \sum_{m=-\ell}^\ell u_j^{\ell,m}(t)Y_{\ell, m}\right\|_{H^s(\IS^2)} 
 				+ \rho \left\| \sum_{\ell=\gk+1}^\infty\sum_{m=-\ell}^\ell u_1^{\ell,m}(t)Y_{\ell, m}\right\|_{H^s(\IS^2)} 
 				\right)^{2m} \right]
 				\right)\\
 	& \le 2 C^2 \left( 1 + \|u_j(t)\|_{L^{2m}(\gO;H^s(\IS^2))}^{2m} \right).
\end{align*}
Therefore Assumption~\ref{ass:test_functions} is satisfied 
if $u_j(t) \in L^{2m}(\gO;H^s(\IS^2))$ which is specified in Proposition~\ref{prop:regularity}. 

Having seen that the class of test functions with derivatives of polynomial growth satisfies Assumption~\ref{ass:test_functions}, we are in place to state our general weak convergence result.

\begin{proposition}\label{prop:weak2}
Under the setting of Proposition~\ref{prop:weak} and Assumption~\ref{ass:test_functions}, there exists a constant~$\hat{C}$ such that the weak errors are bounded by
\begin{align*}
\left|\E\left[\varphi(u_1(t)) - \varphi(u_1^\gk(t))\right]\right| 
	&\le \hat{C}\left( \gk^{-(\alpha/2+s)}
		+\gk^{-(\beta+s)}\|v_1\|_{H^\beta(\IS^2)}
		+\gk^{-(\gamma+s+1)}\|v_2\|_{H^\gamma(\IS^2)} \right) \\
\left|\E\left[\varphi(u_2(t)) - \varphi(u_2^\gk(t))\right]\right| 
	&\le \hat{C}\left( \gk^{-(\alpha/2+s-1)}
		+\gk^{-(\beta+s-1)}\|v_1\|_{H^\beta(\IS^2)}
		+\gk^{-(\gamma+s)}\|v_2\|_{H^\gamma(\IS^2)} \right) 
\end{align*}
for all $\gk > \ell_0$.
\end{proposition}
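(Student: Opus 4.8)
The plan is to linearize the weak error via the fundamental theorem of calculus and then reduce everything to the already-established strong error of Proposition~\ref{prop:error}, using the mean-value representation of the test function together with Assumption~\ref{ass:test_functions}. First I would write, for $j=1,2$,
\begin{equation*}
\varphi(u_j(t)) - \varphi(u_j^\gk(t))
= \left\langle \int_0^1 \varphi'\bigl(\rho\, u_j(t) + (1-\rho)\,u_j^\gk(t)\bigr)\,\diff\rho,\; u_j(t)-u_j^\gk(t)\right\rangle,
\end{equation*}
where the pairing is the $L^2(\IS^2)$ inner product. Taking expectations and applying the Cauchy--Schwarz inequality (first in the $L^2(\IS^2)$ pairing, then in $L^2(\gO)$) gives
\begin{equation*}
\left|\E\bigl[\varphi(u_j(t)) - \varphi(u_j^\gk(t))\bigr]\right|
\le \left\|\int_0^1 \varphi'\bigl(\rho\, u_j(t)+(1-\rho)u_j^\gk(t)\bigr)\,\diff\rho\right\|_{L^2(\gO;H^s(\IS^2))}
\cdot \left\|u_j(t)-u_j^\gk(t)\right\|_{L^2(\gO;H^{-s}(\IS^2))}.
\end{equation*}
The first factor is bounded by $\widetilde C$ by Assumption~\ref{ass:test_functions}. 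Hence the whole matter reduces to estimating the \emph{negative-order} strong error $\|u_j(t)-u_j^\gk(t)\|_{L^2(\gO;H^{-s}(\IS^2))}$.

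The key observation is that the duality between $H^s$ and $H^{-s}$ produces exactly the extra factor $\gk^{-s}$ appearing in the claimed rates. I would re-run the estimate from the proof of Proposition~\ref{prop:error} but now measuring the truncated tail $\sum_{\ell=\gk+1}^\infty\sum_{m=-\ell}^\ell u_j^{\ell,m}(t)Y_{\ell,m}$ in the $H^{-s}(\IS^2)$ norm instead of $L^2(\IS^2)$. Since $(\text{Id}-\Delta_{\IS^2})^{-s/2}Y_{\ell,m}=(1+\ell(\ell+1))^{-s/2}Y_{\ell,m}$ and the sum runs only over $\ell>\gk$, each summand carries an additional factor $(1+\ell(\ell+1))^{-s/2}\le C\,\gk^{-s}$ uniformly over the tail. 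Splitting $u_1^{\ell,m}(t)$ via~\eqref{eq:sys} into the two initial-value contributions (with kernels $R_2^\ell$ and $R_1^\ell$) and the stochastic-convolution contribution $\hat W_1^{\ell,m}(t)$, I would bound the initial-value parts exactly as before — producing $\gk^{-(\beta+s)}\|v_1\|_{H^\beta(\IS^2)}$ and $\gk^{-(\gamma+s+1)}\|v_2\|_{H^\gamma(\IS^2)}$ after using $v_1\in H^\beta$, $v_2\in H^\gamma$, and the $(\ell(\ell+1))^{-1/2}$ gain from $R_1^\ell$ — and bound the noise part using Theorem~\ref{thm:iGRF_Lp_conv} applied to the field with shifted angular power spectrum $\widetilde A_\ell(1+\ell(\ell+1))^{-s}$, whose decay order is now $\ga+2s$, yielding the term $\gk^{-(\ga/2+s)}$. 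Summing the three contributions delivers the first claimed bound; the estimate for $u_2$ is identical except that every kernel picks up one power of $(\ell(\ell+1))^{1/2}$, which uniformly degrades each rate by exactly one, giving the stated second line.

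The main obstacle I expect is ensuring that Assumption~\ref{ass:test_functions} really closes the argument: the assumption is stated with the $H^s$ norm of $\varphi'$, and it is the pairing of $H^s$ against $H^{-s}$ that must be invoked, so I must be careful that the duality $|\langle g, h\rangle_{L^2}|\le \|g\|_{H^s}\|h\|_{H^{-s}}$ is applied with the correct, matching smoothness indices and that the integral-over-$\rho$ average of $\varphi'$ is precisely the quantity controlled by $\widetilde C$. A secondary technical point is verifying that the negative-order truncation error is genuinely finite and converges — i.e.\ that pushing the norm down to $H^{-s}$ does not create convergence issues at small $\ell$ and only improves the tail; this is immediate since $H^{-s}\supset L^2$ with a bounded embedding, so finiteness is inherited from Proposition~\ref{prop:error} while the rate strictly improves by $\gk^{-s}$. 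Once these two points are handled, the remaining computations are routine repetitions of the Proposition~\ref{prop:error} estimates with the extra spectral weight, and can be left to the reader.
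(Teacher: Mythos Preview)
Your proposal is correct and follows essentially the same route as the paper: the mean-value representation for Fr\'echet differentiable~$\varphi$, the Cauchy--Schwarz inequality in the $H^s/H^{-s}$ duality pairing, Assumption~\ref{ass:test_functions} to bound the $H^s$ factor, and then a re-run of the strong-error argument of Proposition~\ref{prop:error} in the $H^{-s}$ norm with the shifted angular power spectrum. One minor arithmetic slip: since $\widetilde A_\ell \le C T \ell^{-(\alpha+2)}$ (not $\ell^{-\alpha}$), the shifted spectrum $\widetilde A_\ell(1+\ell(\ell+1))^{-s}$ decays like $\ell^{-(\alpha+2+2s)}$, not $\ell^{-(\alpha+2s)}$, and it is this that produces the stated rate $\gk^{-(\alpha/2+s)}$ via Theorem~\ref{thm:iGRF_Lp_conv}.
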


\begin{proof}
The proof is inspired by~\cite{AKL16}.
Consider the Gelfand triple 
$$
V\subset H\subset V^*
$$
with $V=H^s(\IS^2), H=L^2(\IS^2)$ and $V^*=H^{-s}(\IS^2)$. The mean value theorem for Fr\'echet derivatives followed by the Cauchy--Schwarz inequality yields
\begin{align*}
\left|\E\left[ \varphi(u_j(t))-\varphi(u_j^\gk(t)) \right]\right|
	&= \left| \E\left[ \prescript{}{V}\langle \int_0^1\varphi'(\rho u_j(t)+(1-\rho)u_j^\gk(t))\,\text d\rho, u_j(t)-u_j^\gk(t)\rangle_{V^*} \right] \right| \\
&\leq \left\|\int_0^1\varphi'(\rho u_j(t)+(1-\rho)u_j^\gk(t))\,\text d\rho\right\|_{L^2(\Omega;V)} \|u_j(t)-u_j^\gk(t)\|_{L^2(\Omega;V^*)}
\end{align*}
for $j=1,2$. The first term is bounded by Assumption~\ref{ass:test_functions} 
so that the convergence rate will be obtained from the second term. 
Details are only given for the first component, i.\,e., for $j=1$, 
and are obtained for $u_2$ in a similar way. 

Following the proof of Proposition~\ref{prop:error}, we obtain
\begin{align*}
&\left\|u_1(t)-u_1^\gk(t)\right\|_{L^2(\gO;H^{-s}(\IS^2))}\\
&\qquad \leq \|v_1 - v_1^\gk\|_{H^{-s}(\IS^2)} 
		+ \left(\sum_{\ell=\gk+1}^\infty \sum_{m=-\ell}^\ell (1+\ell(\ell+1))^{-s} (\ell(\ell+1))^{-1}|v_2^{\ell,m}|^2\right)^{1/2}\\
		& \qquad \qquad + \|\hat{Z}-\hat{Z}^\gk\|_{L^2(\gO;L^2(\IS^2))}
\end{align*}
with
\begin{equation*}
 \hat{Z} = (\text{Id}-\Delta_{\IS^2})^{-s/2} Z
\end{equation*}
and $\hat{Z}^\gk$ its approximation. Therefore the angular power spectrum of the centered Gaussian random field~$\hat{Z}$ is given by
\begin{equation*}
 \hat{A}_\ell
 	= (1 + \ell(\ell+1))^{-s} \widetilde{A}_\ell
 	\le C \ell^{-(\alpha + 2 + 2s)}.
\end{equation*}
Applying Theorem~\ref{thm:iGRF_Lp_conv} to~$\hat{Z}$ and bounding the initial conditions as in Proposition~\ref{prop:error} with the additional weights $(1+ \ell(\ell+1))^{-s/2}$ yield
\begin{align*}
 \left\|u_1(t)-u_1^\gk(t)\right\|_{L^2(\gO;H^{-s}(\IS^2))}
	&\leq C\left( \gk^{-(\alpha/2+s)}
		+\gk^{-(\beta+s)}\|v_1\|_{H^\beta(\IS^2)}
		+\gk^{-(\gamma+s+1)}\|v_2\|_{H^\gamma(\IS^2)} \right), 	
\end{align*}
which concludes the proof for the weak error in the first component.
\end{proof}

Proposition~\ref{prop:weak} states that the approximation of the second moment converges with twice the strong rate of convergence obtained in Proposition~\ref{prop:error}. Let us now investigate which regularity (for the noise and initial values) is required to achieve twice the strong rate in Proposition~\ref{prop:weak2}. We focus on the convergence of the noise in the parameter~$\ga$. Similar considerations hold for the initial conditions.

For having the weak rate in Proposition~\ref{prop:weak2} to be twice the strong rate from Proposition~\ref{prop:error}, 
one would need $s = \ga/2$ for $u_1$ and $s = \ga/2-1$ for $u_2$.

The regularity result from Proposition~\ref{prop:regularity} reads $u_1(t) \in L^{2m}(\gO;H^s(\IS^2))$ for all $s < \ga/2$ and $u_2(t) \in L^{2m}(\gO;H^s(\IS^2))$ for all $s < \ga/2-1$. This together with the polynomial growth 
assumption~\eqref{eq:poly_growth} on the test functions would imply that Assumption~\ref{ass:test_functions} 
is satisfied for all $s < \ga/2$ for $u_1$ and $s < \ga/2-1$ for $u_2$. Therefore, in the situation of Proposition~\ref{prop:weak2},
the general rule of thumb for the rate of weak convergence is also valid.

We end this section by observing that several strategies for proving weak rates of convergence 
of numerical solutions to SPDEs in the literature could be extended to the present setting or 
in the case of numerical discretizations of nonlinear stochastic wave equations on the sphere, 
see for instance \cite{DP09,KLL12,W15,BHS18,JJW18,HM19,KLP20} and references therein. This could be subject of future research.

\section{Numerical experiments}\label{sec:num}
We present several numerical experiments with the aim of supporting and illustrating  
the above theoretical results. 

In order to illustrate the rate of convergence of the mean-square error from Proposition~\ref{prop:error}, 
we consider a ``reference'' solution at time $T=1$ with $\gk=2^7$ (since for larger $\gk$ the elements 
of the angular power spectrum~$A_\ell$ and therefore the increments 
were so small that MATLAB failed to calculate the series expansion). 
The initial values are taken to be $v_1=v_2=0$ in order to observe the convergence rate only with respect to the regularity of the noise given by the parameter~$\ga$. Afterwards we will perform a numerical example illustrating the convergence rate with respect to the regularity of the initial position given by the parameter~$\gb$.  
We then compute one time step of numerical solutions 
(since we have shown in Proposition~\ref{prop:error} that the convergence rate is independent 
of the number of calculated time steps) and compute the errors for various truncation indices~$\gk$. 
Instead of the $L^2(\IS^2)$ error in space, we used the maximum over all grid points which is a stronger error. 
The results and the theoretical convergence rates are shown for $\ga=3$ and $\ga=5$ in Figure~\ref{fig:swe1a}, 
resp.\ Figure~\ref{fig:swe1b}. 

\begin{figure}
\subfigure[Sample of solution.]{\includegraphics[width=0.32\textwidth]{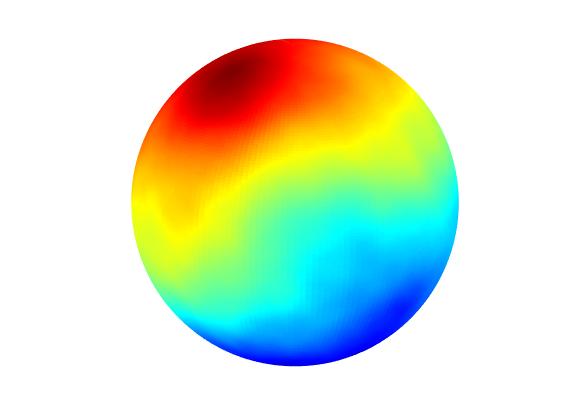}}
\subfigure[Errors in position.]{\includegraphics[width=0.32\textwidth]{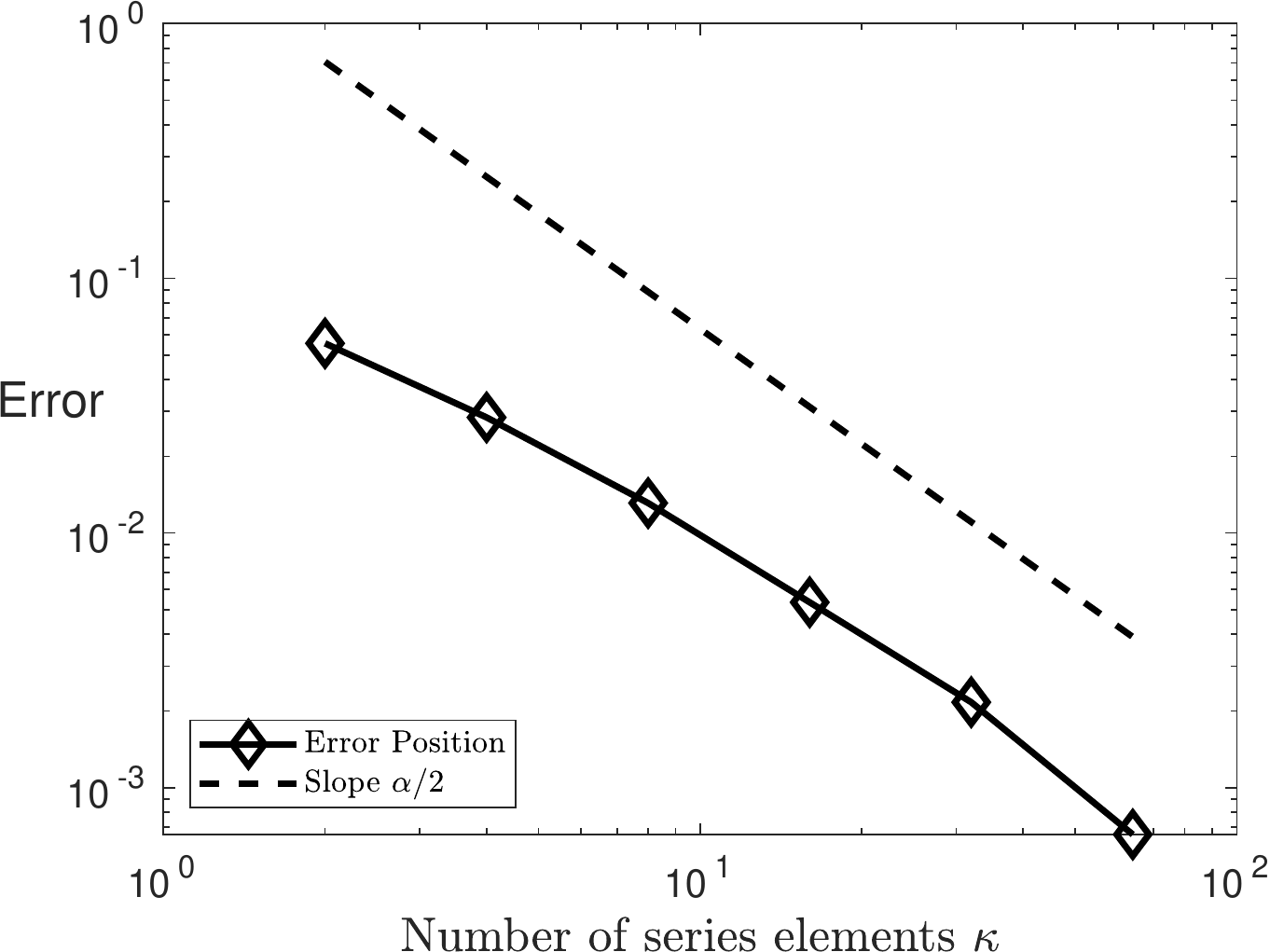}}
\subfigure[Errors in velocity.]{\includegraphics[width=0.32\textwidth]{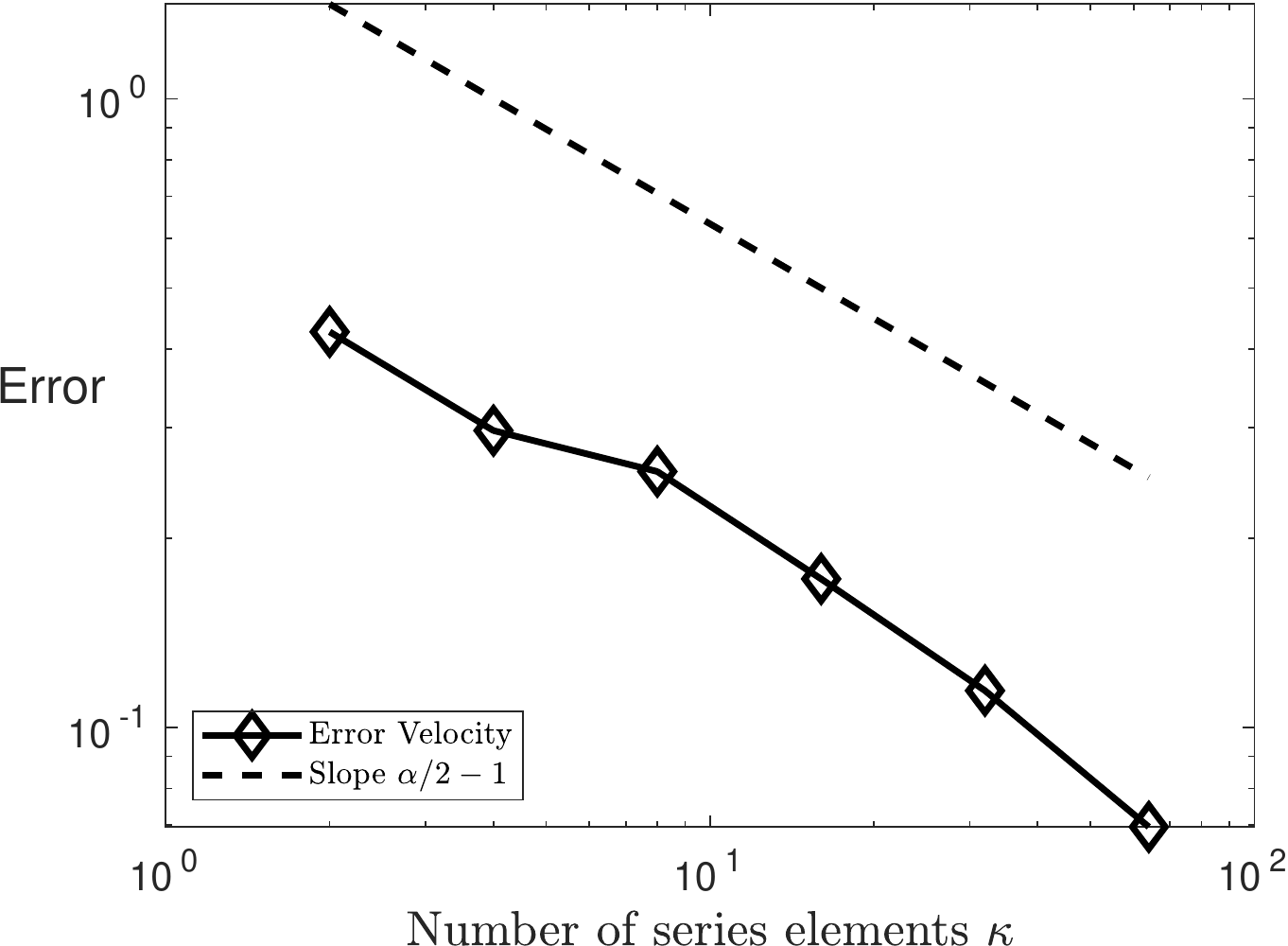}}
\caption{Sample and mean-square errors of the approximation of the stochastic wave equation 
with angular power spectrum of the $Q$-Wiener process with parameter $\ga=3$ and $100$~Monte Carlo samples.}
\label{fig:swe1a}
\end{figure}

In these figures, one observes that the simulation results match 
the theoretical results from Proposition~\ref{prop:error}. 
In addition, in order to illustrate the structure of the solution~$u$   
in dependence of the decay of the angular power spectrum, we include samples 
next to the convergence plots.

\begin{figure}
\subfigure[Sample of solution.]{\includegraphics[width=0.32\textwidth]{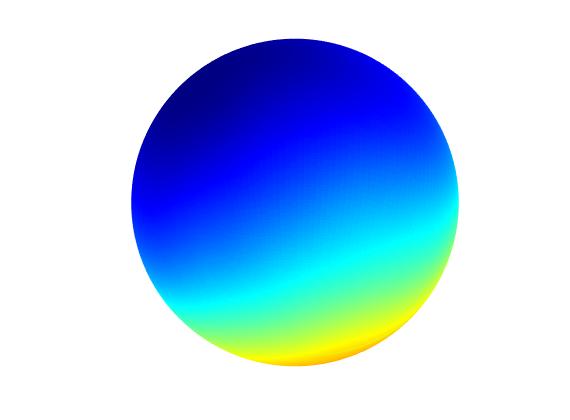}}
\subfigure[Errors in position.]{\includegraphics[width=0.32\textwidth]{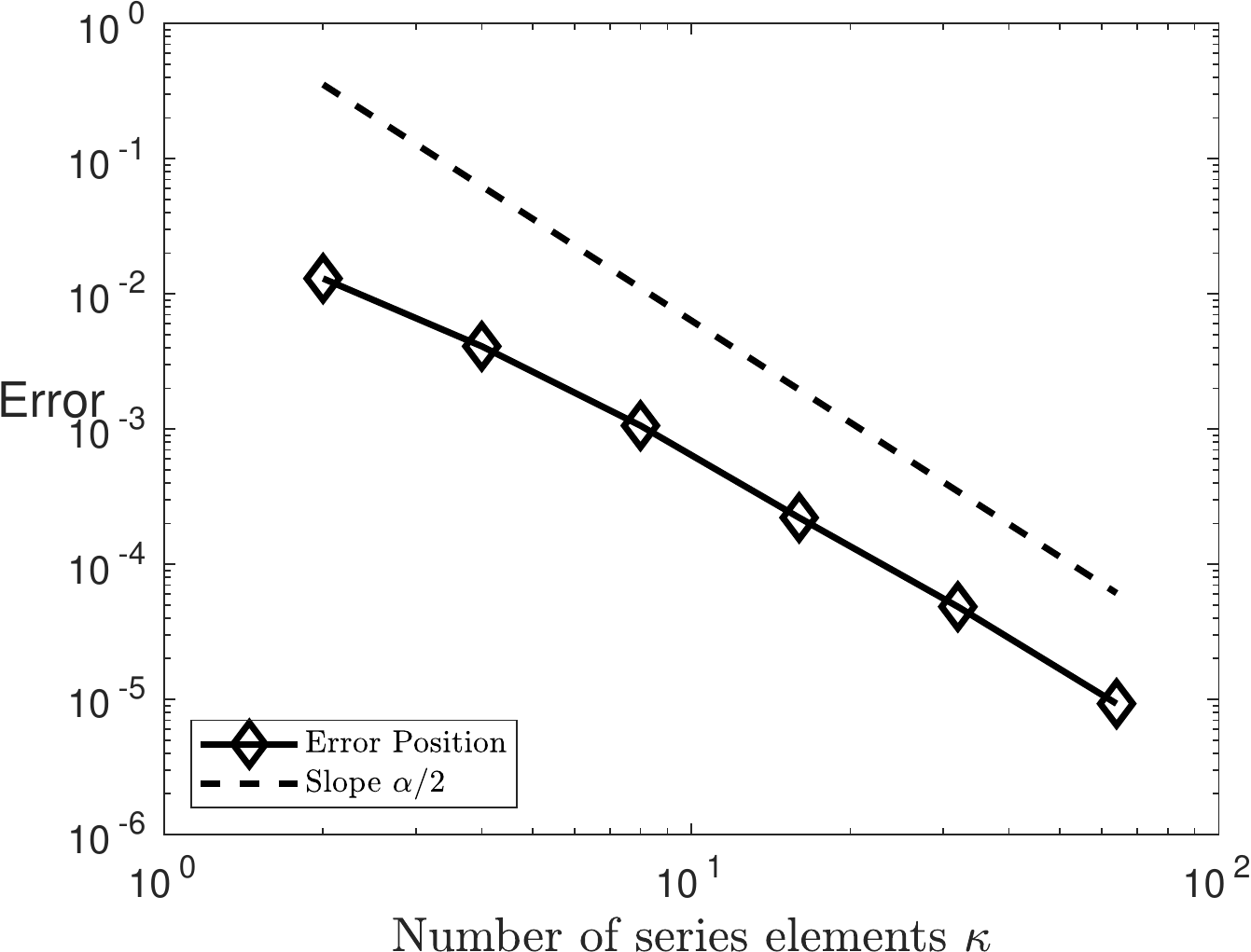}}
\subfigure[Errors in velocity.]{\includegraphics[width=0.32\textwidth]{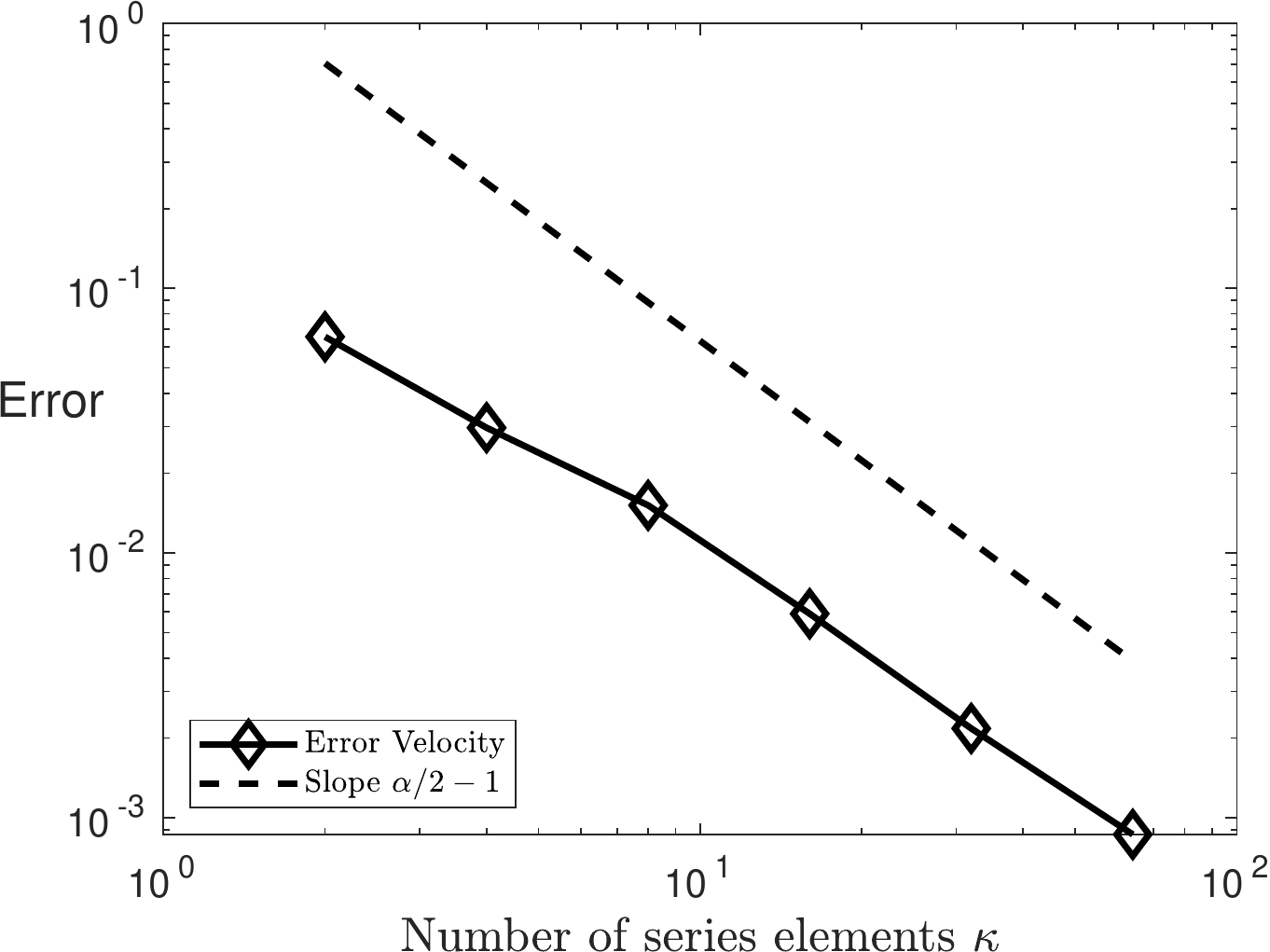}}
\caption{Sample and mean-square errors of the approximation of the stochastic wave equation 
with angular power spectrum of the $Q$-Wiener process with parameter $\ga=5$ and $100$~Monte Carlo samples.}
\label{fig:swe1b}
\end{figure}

In order to illustrate Remark~\ref{rem1} on the possibility of taking the parameter $0<\alpha<2$ 
to show convergence in the first component, we repeat the previous numerical experiments with $\ga=1$. 
The results are presented in Figure~\ref{fig:swe1c}. There, for such non-smooth noise, 
one can observe convergence in the position but not in the velocity. 
Similar observations were made for time discretizations of stochastic wave equations on 
domains (that are not manifolds) in \cite{MR3033008,MR3484400}, for instance.

\begin{figure}
\subfigure[Sample of solution.]{\includegraphics[width=0.32\textwidth]{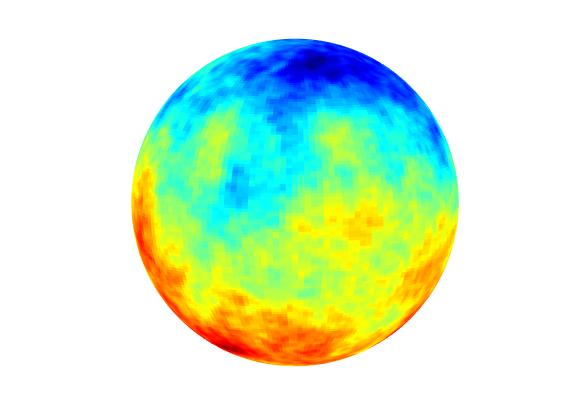}}
\subfigure[Errors in position.]{\includegraphics[width=0.32\textwidth]{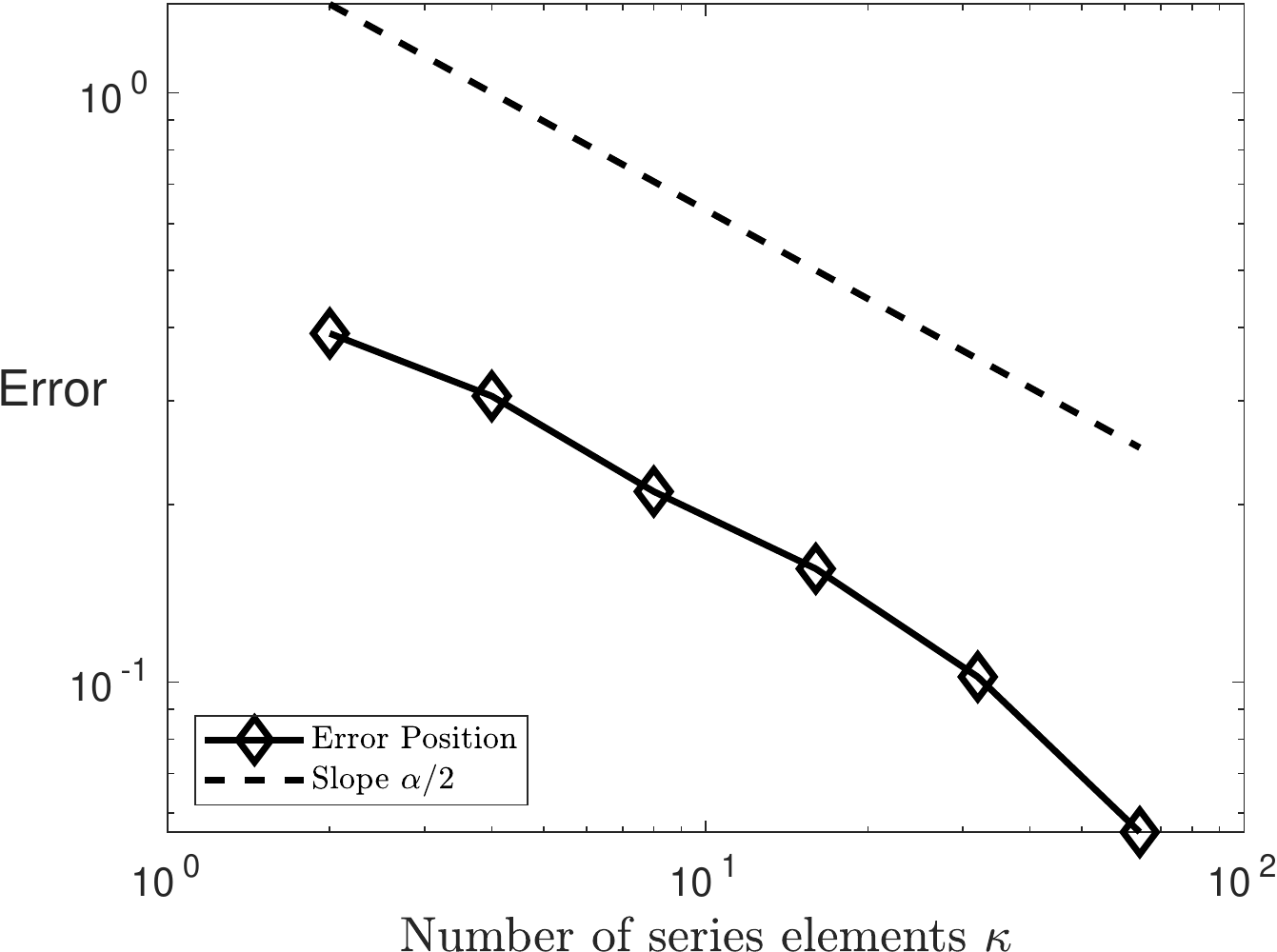}}
\subfigure[Errors in velocity.]{\includegraphics[width=0.32\textwidth]{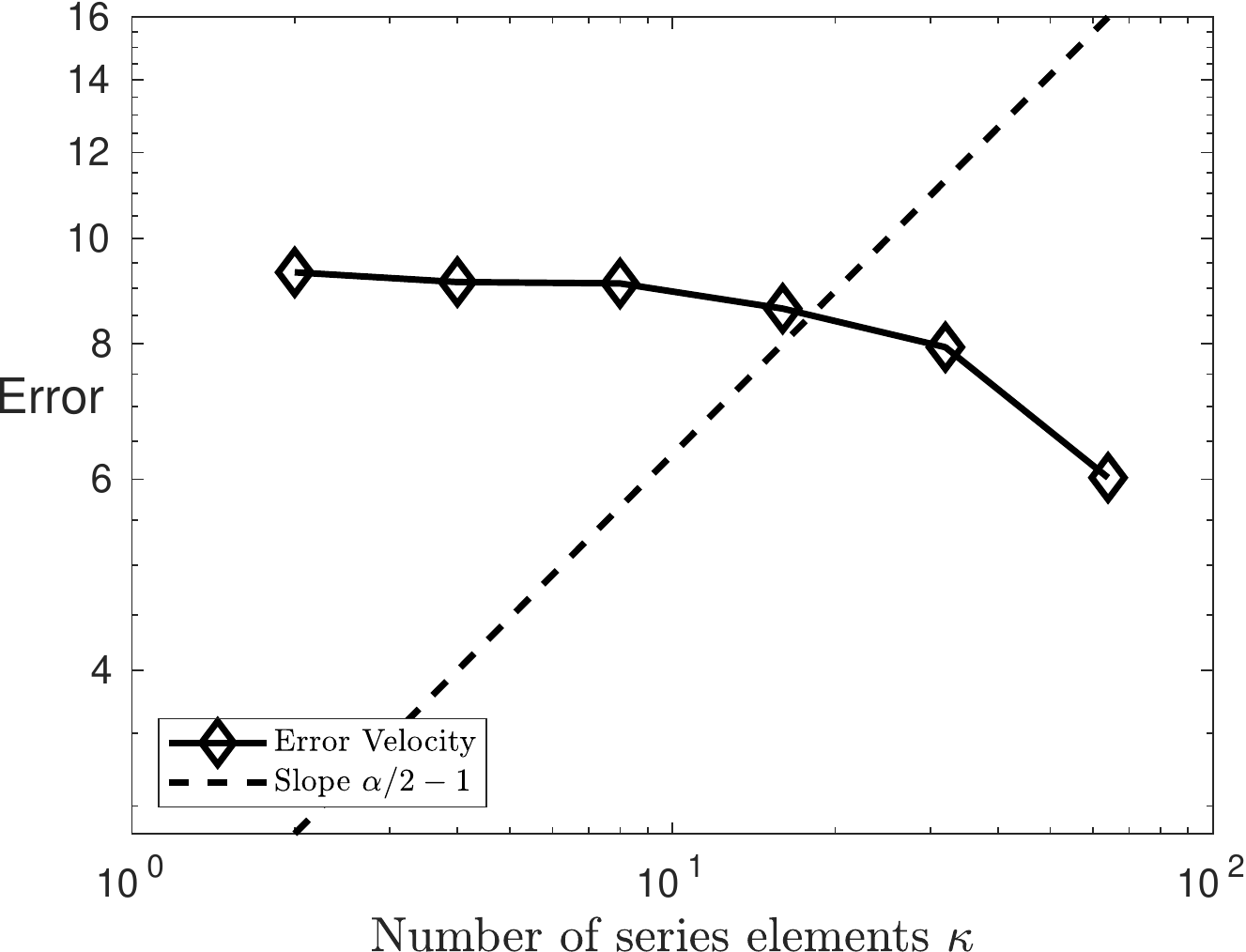}}
\caption{Sample and mean-square errors of the approximation of the stochastic wave equation 
with angular power spectrum of the $Q$-Wiener process with parameter $\ga=1$ and $100$~Monte Carlo samples.}
\label{fig:swe1c}
\end{figure}

In Figure~\ref{fig:init} we illustrate the convergence rates with respect to the regularity of the initial position from Proposition~\ref{prop:error}. To ensure that the regularity of the initial position dominates the error, we choose $\ga=10$ and a random initial position~$v_1$ scaled such that it belongs to $H^\gb(\IS^2)$ with $\gb=2$. The expected convergence rates are indeed observed in this figure.

\begin{figure}
	\subfigure[Errors in position.]{\includegraphics[width=0.49\textwidth]{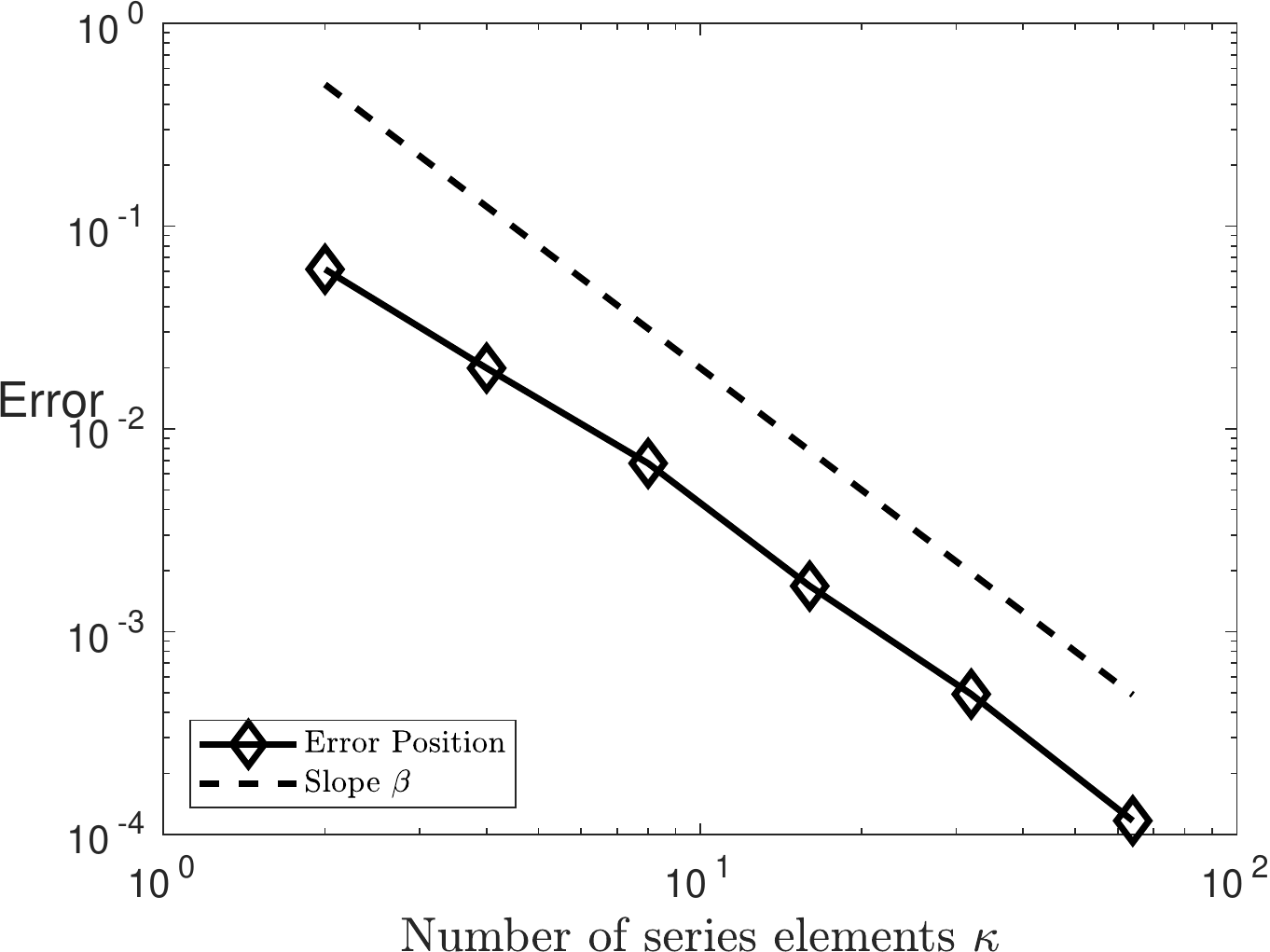}}
	\subfigure[Errors in velocity.]{\includegraphics[width=0.49\textwidth]{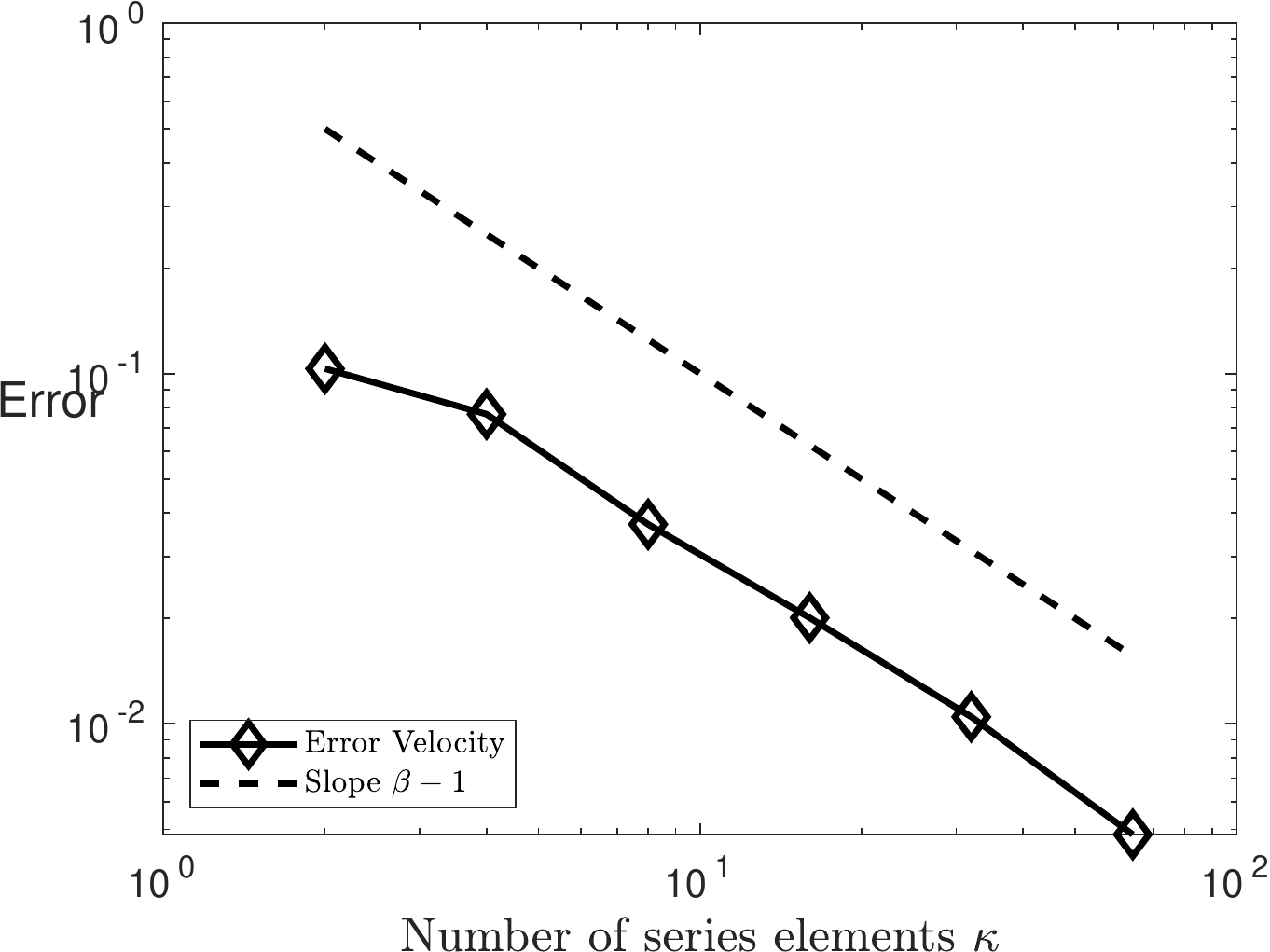}}
	\caption{Mean-square errors of the approximation of the stochastic wave equation 
		with angular power spectrum of the $Q$-Wiener process with parameter $\ga=10$ and $v_1 \in H^\gb(\IS^2)$ for $\gb = 2$ and $100$~Monte Carlo samples.}\label{fig:init}
\end{figure}

Errors of one path of the stochastic wave equation to the corresponding error plots from the previous figures 
(Figure~\ref{fig:swe1a} and Figure~\ref{fig:swe1b})  
are presented in Figure~\ref{fig:swe2}. The observed convergence rates coincide with 
the theoretical results on $\IP$-almost sure convergence in the second part of Proposition~\ref{prop:error}.

\begin{figure}
\subfigure[Angular power spectrum with parameter $\ga=3$.]{\includegraphics[width=0.49\textwidth]{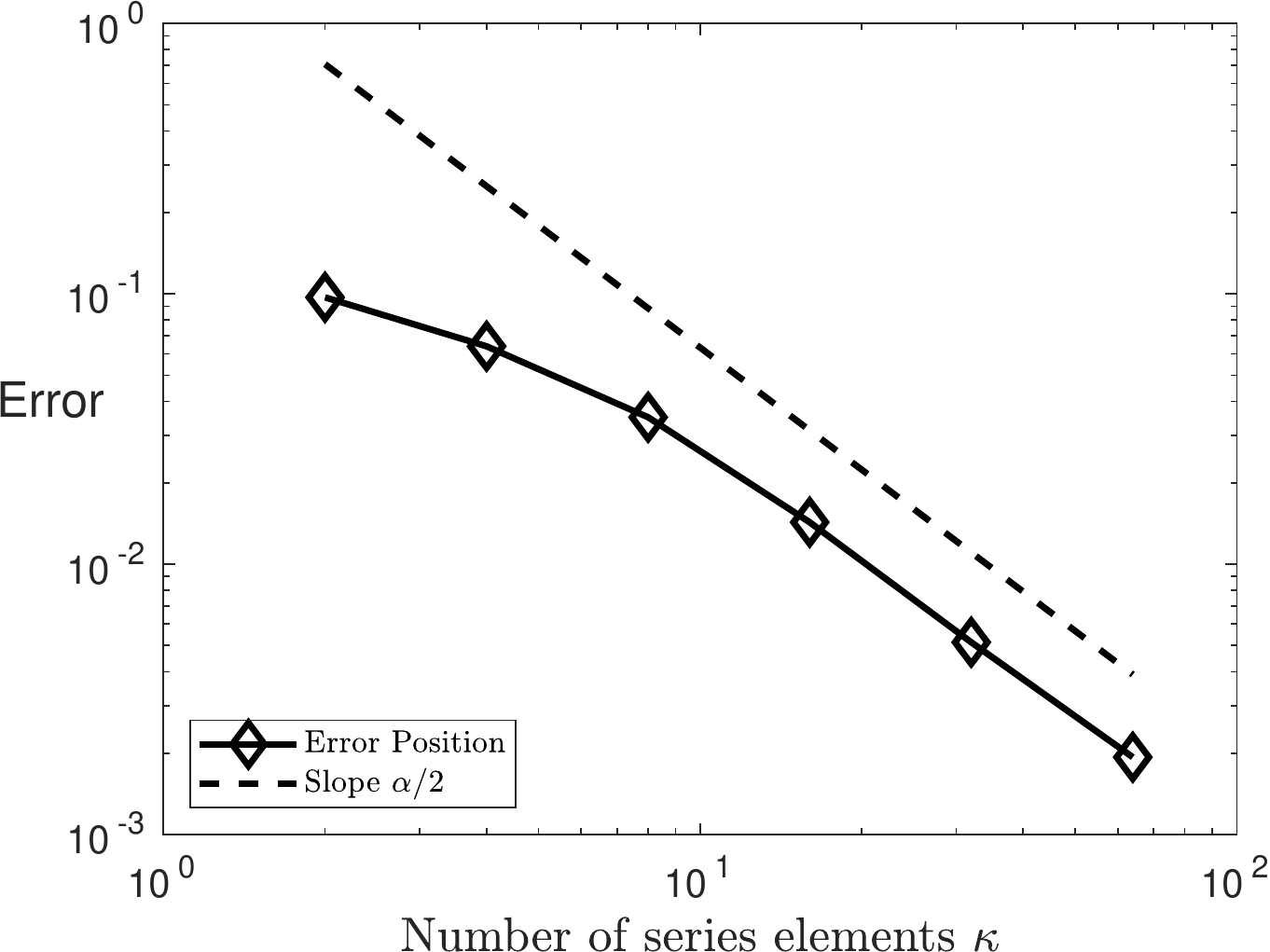}}
\subfigure[Angular power spectrum with parameter $\ga=5$.]{\includegraphics[width=0.49\textwidth]{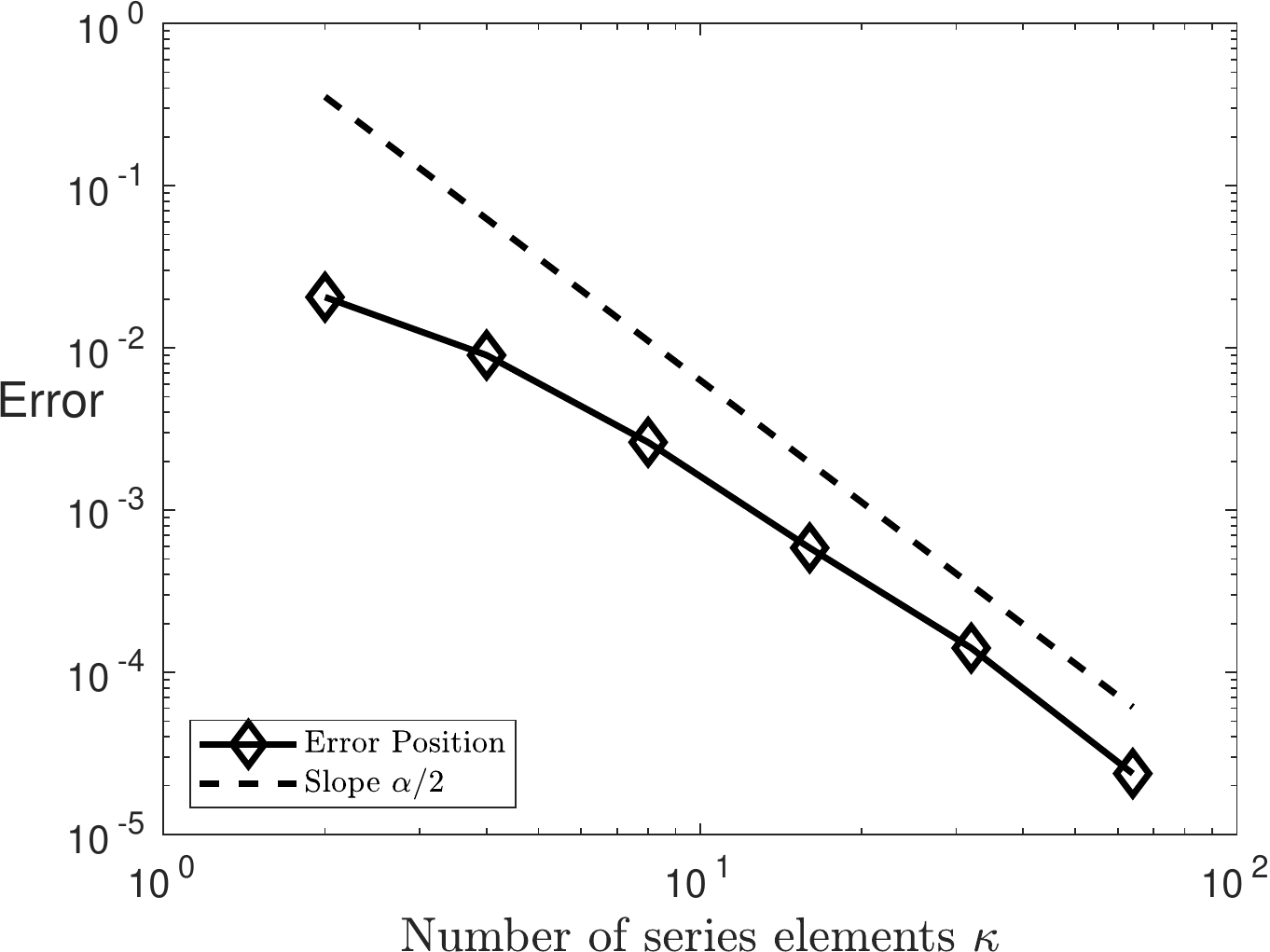}}
  \caption{Error of the approximation of a path of the stochastic wave equation 
  with different angular power spectra of the $Q$-Wiener process.}\label{fig:swe2}
\end{figure}

Let us now illustrate the weak rates of convergence from Proposition~\ref{prop:weak} and Proposition~\ref{prop:weak2}.
We consider a ``reference'' solution at time $T=1$ with $\gk=2^7$. 
The initial values are taken to be $v_1=v_2=0$. The test functions are given by $\varphi(u)=\|u\|_{L^2(\IS^2)}^2$ and 
$\varphi(u)=\exp(-\|u\|_{L^2(\IS^2)}^2)$. Observe that the second test function is of class~$C^2$, bounded and with bounded derivatives. Proposition~\ref{prop:weak} and Proposition~\ref{prop:weak2} guarantee that the weak rates will be essentially twice the strong rates in both cases. This is confirmed for $\ga=3$ in Figure~\ref{fig:weak}.

\begin{figure}
\subfigure[$\varphi(u)=\|u\|_{L^2(\IS^2)}^2$.]{\includegraphics[width=0.49\textwidth]{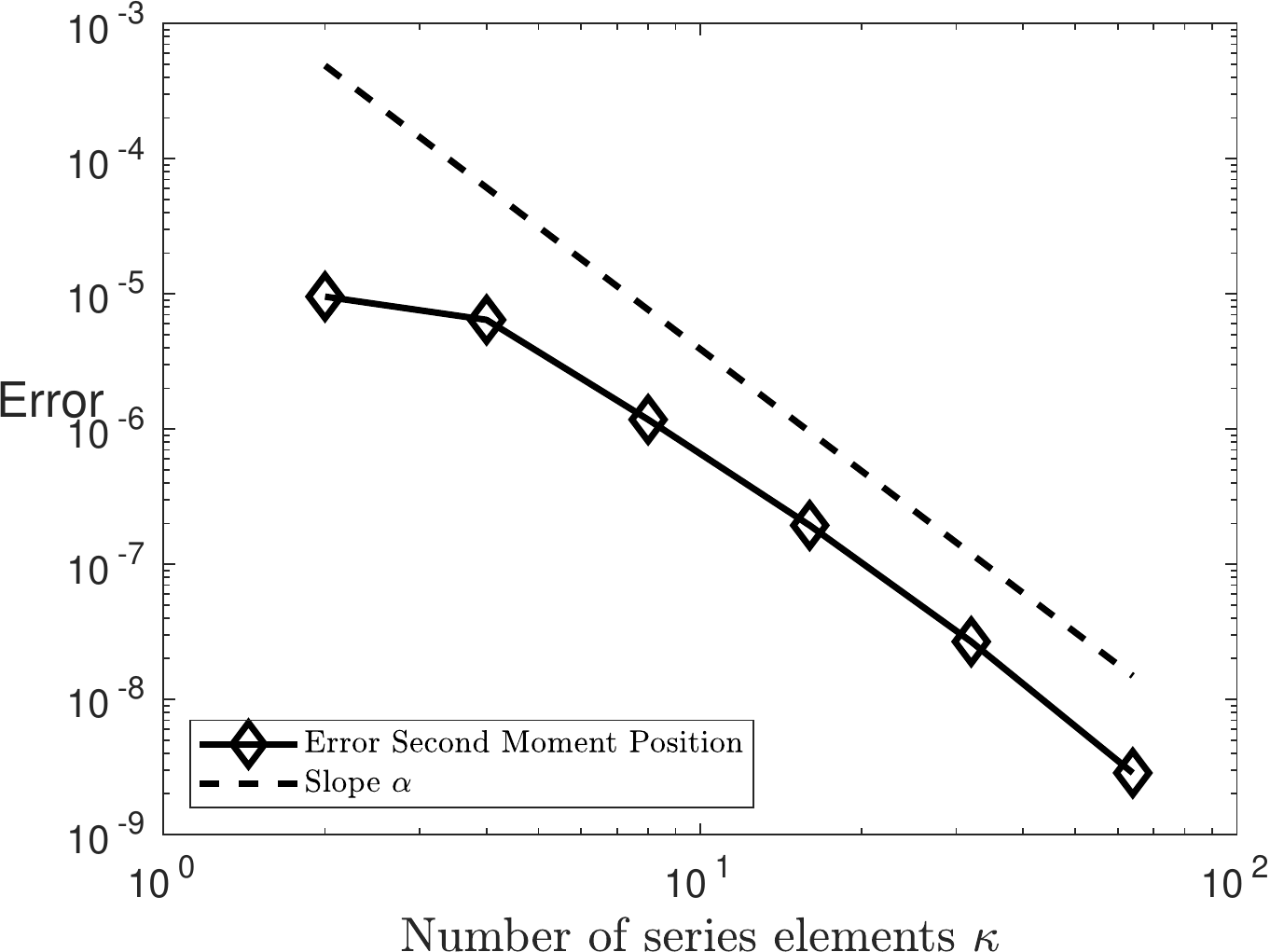}}
\subfigure[$\varphi(u)=\|u\|_{L^2(\IS^2)}^2$.]{\includegraphics[width=0.49\textwidth]{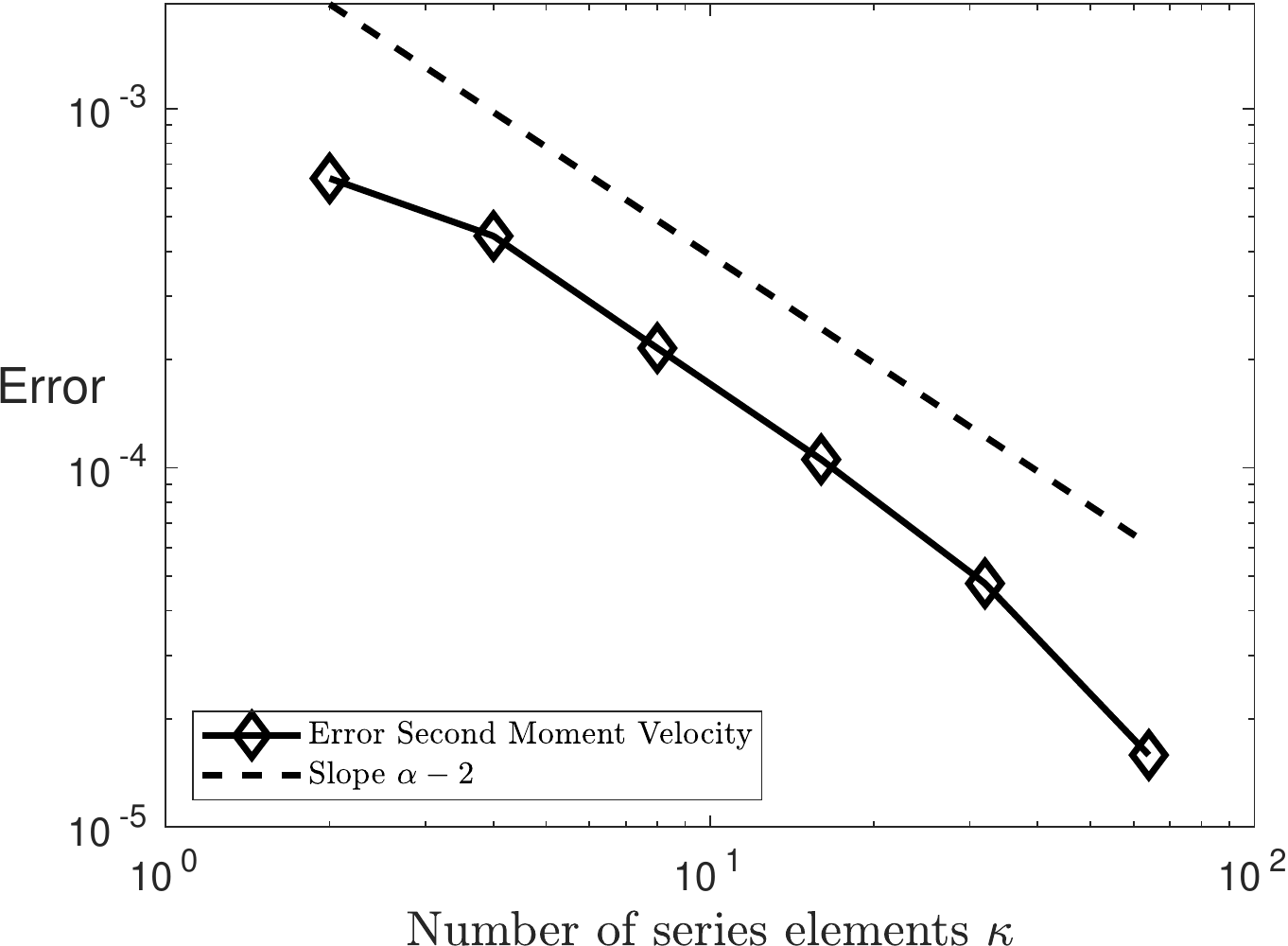}}
\subfigure[$\varphi(u)=\exp(-\|u\|_{L^2(\IS^2)}^2)$.]{\includegraphics[width=0.49\textwidth]{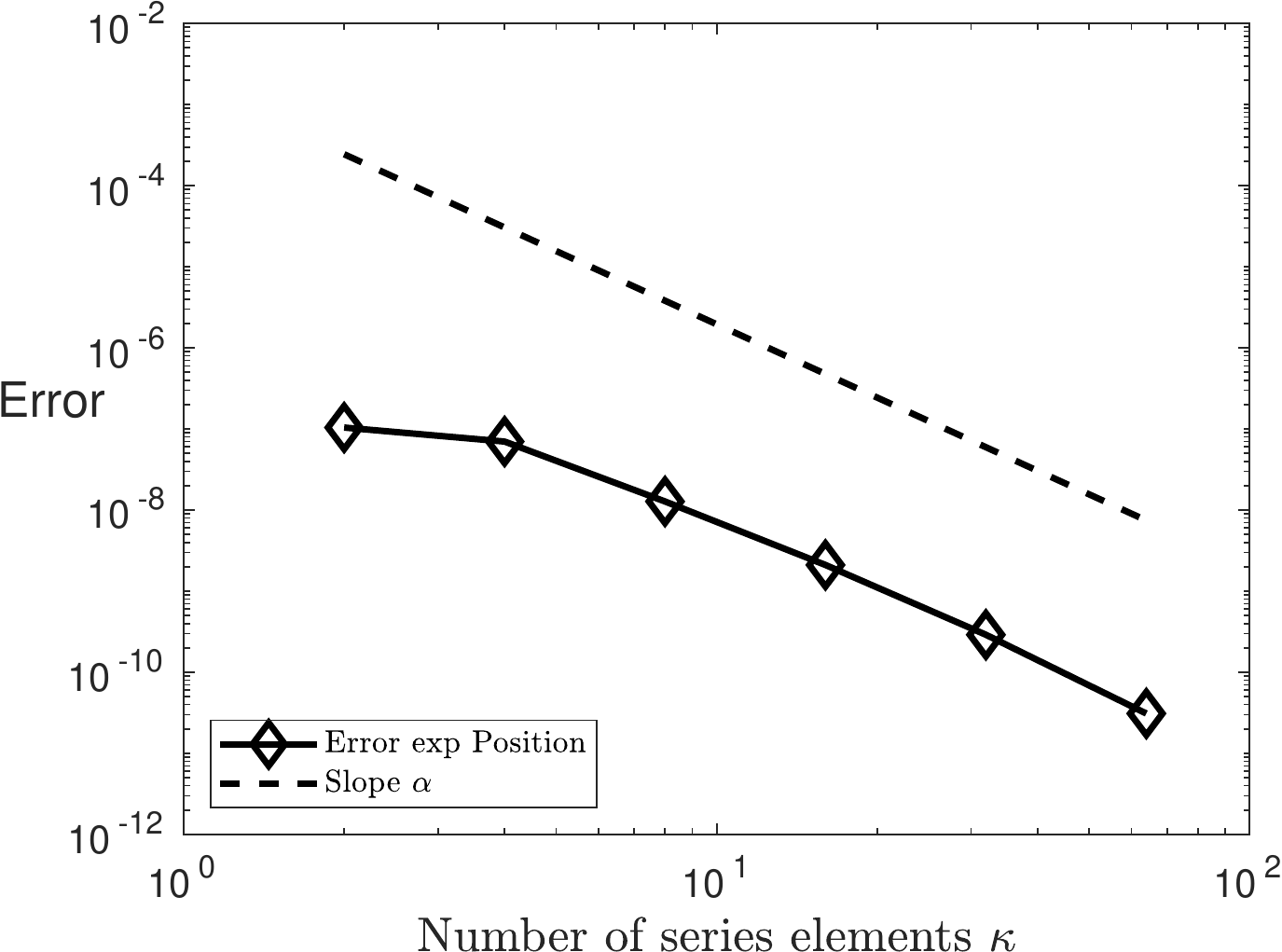}}
\subfigure[$\varphi(u)=\exp(-\|u\|_{L^2(\IS^2)}^2)$.]{\includegraphics[width=0.49\textwidth]{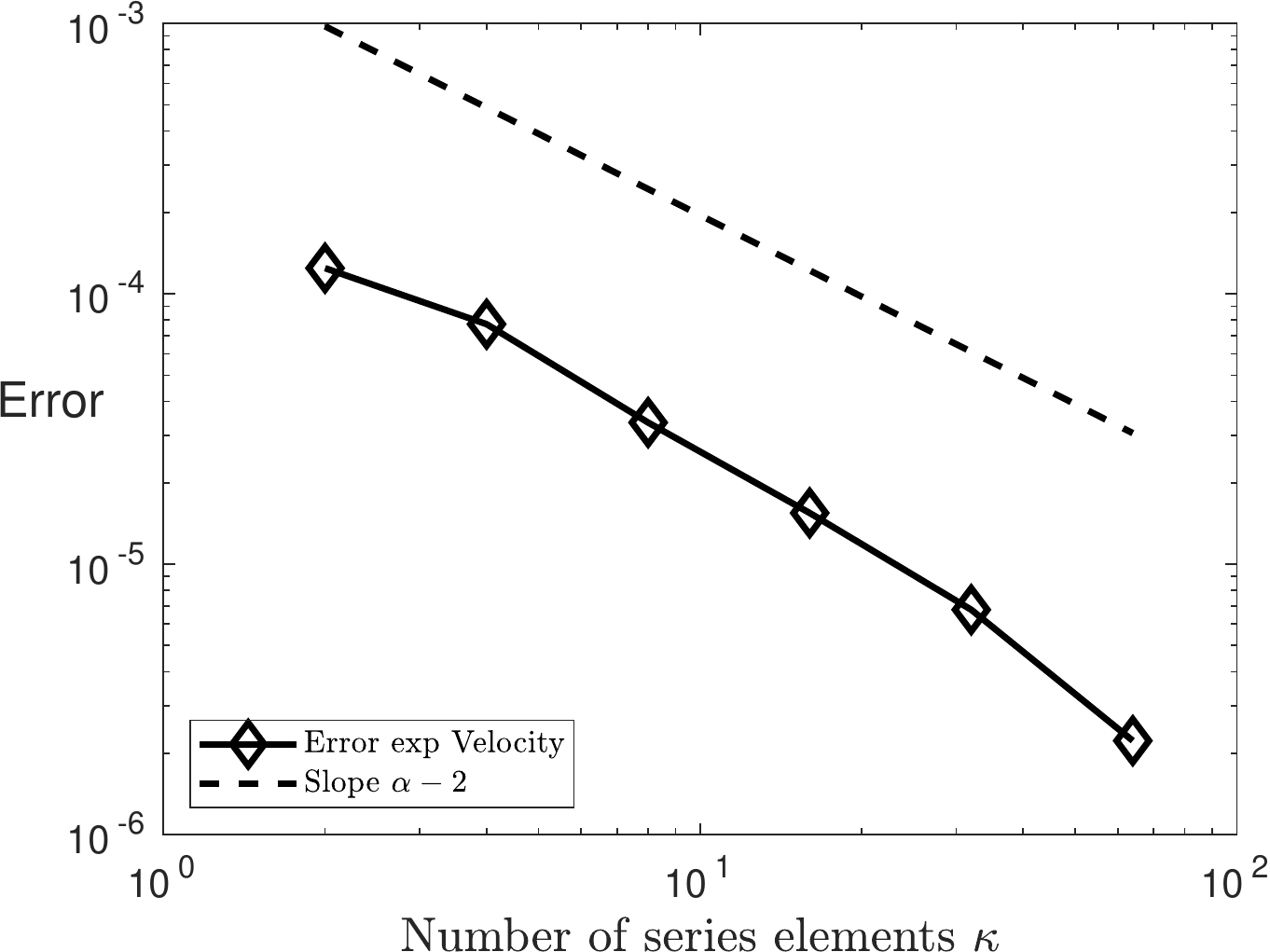}}
\caption{Weak errors of the approximation of the stochastic wave equation 
with angular power spectrum of the $Q$-Wiener process with parameter $\ga=3$ 
and $1000$~Monte Carlo samples. Left column shows position, right column velocity.}
\label{fig:weak}
\end{figure}

\section{Further extensions}
In this section, we extend some of the above results first to the case of the stochastic wave equation on  
higher-dimensional spheres $\IS^{d-1}$, for some integer $d>3$, and second to the case of a free 
stochastic Schr\"odinger equation on the sphere $\IS^2$. We keep this section concise and focus on strong and $\IP$-a.s.\ convergence.

\subsection{The stochastic wave equation on~$\IS^{d-1}$}

Let us consider the more general situation of the stochastic wave equation on the unit sphere $\IS^{d-1} = \{x \in \R^d, \|x\|_{\R^d} = 1\}$ embedded into $\R^d$. 
The angular distance of two points $x$ and $y$ on~$\IS^{d-1}$ is given in the same way as on~$\IS^2$, see Section~\ref{sec:setting}. 
Let us denote by $(S_{\ell, m}, \ell \in \N_0, m=1,\ldots,h(\ell,d))$ the spherical harmonics on~$\IS^{d-1}$, where
\begin{equation*}
h(\ell,d) = (2\ell + d - 2) \frac{(\ell+d-3)!}{(d-2)!\,\ell!}.
\end{equation*}

Using the same setup as in~\cite{MR3404631} which goes back to~\cite{Y83}, a centered isotropic Gaussian random field~$Z$ on~$\IS^{d-1}$ admits a Karhunen--Lo\`eve expansion
\begin{equation*}
Z(x) = \sum_{\ell=0}^\infty \sum_{m=1}^{h(\ell,d)} a_{\ell, m} S_{\ell, m}(x),
\end{equation*}
where $(a_{\ell, m}, \ell \in \N_0,m=1,\ldots,h(\ell,d))$ is a sequence of independent Gaussian random variables satisfying
\begin{equation*}
\E[a_{\ell, m}] = 0, \quad
\E[a_{\ell, m} a_{\ell', m'}] = A_\ell \delta_{\ell \ell'} \delta_{m m'}
\end{equation*}
for $\ell, \ell' \in \N_0$ and $m=1,\ldots,h(\ell,d)$, $m' = 1,\ldots, h(\ell',d)$ and
\begin{equation*}
\sum_{\ell=0}^\infty A_\ell \, h(\ell,d) < + \infty.
\end{equation*}
The series converges with probability one and in $L^p(\gO;\R)$ 
as well as in $L^2(\gO;L^p(\IS^{d-1}))$, $p \ge 1$. 
Denoting by $(A_\ell, \ell \in \N_0)$ the angular power spectrum of~$Z$
for $\IS^{d-1}$ in analogy to what was done for~$\IS^2$, 
we can rewrite
\begin{equation*}
Z
= \sum_{\ell=0}^\infty \sum_{m=1}^{h(\ell,d)} a_{\ell, m} S_{\ell, m}
= \sum_{\ell=0}^\infty \sqrt{A_\ell} \sum_{m=1}^{h(\ell,d)} X_{\ell, m} S_{\ell, m},
\end{equation*}
where $(X_{\ell, m}, \ell \in \N_0, m=1,\ldots,h(\ell,d))$ is the sequence of independent, standard normally distributed random variables derived by $X_{\ell, m} = a_{\ell, m}/\sqrt{A_\ell}$.
We set
\begin{equation*}
Z^\gk
= \sum_{\ell=0}^\gk \sqrt{A_\ell} \sum_{m=1}^{h(\ell,d)} X_{\ell, m} S_{\ell, m}
\end{equation*}
for the corresponding sequence of truncated random fields $(Z^\gk,\gk \in \N)$.
It is shown in Theorem~5.5 in~\cite{MR3404631} that these approximations converge to the random field~$Z$ in $L^p(\gO;L^2(\IS^{d-1}))$ and $\IP$-almost surely with error bounds
	\begin{equation}\label{eq:conv_Lp_RF_Sd}
	\|Z - Z^\gk\|_{L^p(\gO;L^2(\IS^{d-1}))}
	\le C_p \cdot \gk^{-(\ga+1-d)/2}
	\end{equation}
	for $\gk > \ell_0$ and
	for all $\gd < (\ga+1-d)/2$ 
	\begin{equation}\label{eq:conv_as_RF_Sd}
	\|Z - Z^\gk\|_{L^2(\IS^{d-1})}
	\le \gk^{-\gd},
	\quad \IP\text{-a.s.},
	\end{equation}
	where $A_\ell \le C \cdot \ell^{-\ga}$ for $\ell \ge \ell_0$.
	This generalizes Theorem~\ref{thm:iGRF_Lp_conv} above and leads to convergence rates 
	that depend also on the dimension of the sphere.
	
Similarly to~\eqref{eqW} in Section~\ref{sec:setting}, we introduce a $Q$-Wiener process $(W(t), t \in \IT)$ on some finite interval $\IT = [0,T]$ with values in~$L^2(\IS^{d-1})$ by the expansion 
\begin{equation}\label{eqW_Sd}
W(t,y)
= \sum_{\ell=0}^\infty \sum_{m=1}^{h(\ell,d)} a^{\ell, m}(t) S_{\ell, m}(y)
= \sum_{\ell=0}^\infty \sqrt{A_\ell} \sum_{m=1}^{h(\ell,d)} \gb^{\ell, m}(t) S_{\ell, m}(y),
\end{equation}
where $(\gb^{\ell, m}, \ell \in \N_0, m=1,\ldots, h(\ell,d))$ 
is a sequence of independent, real-valued Brownian motions.

We next recall that the Laplace--Beltrami operator~$\Delta_{\IS^{d-1}}$ on~$\IS^{d-1}$ has the spherical harmonics $(S_{\ell, m}, \ell \in \N_0, m=1,\ldots,h(\ell,d))$ as eigenbasis with eigenvalues given by
\begin{equation*}
\Delta_{\IS^{d-1}} S_{\ell, m}
= - \ell (\ell+d-2)S_{\ell, m}
\end{equation*}
for $\ell \in \N_0$ and $m=1,\ldots,h(\ell,d)$ (see, e.\,g., \cite[Sec.~3.3]{AH12}).

We introduce Sobolev spaces on~$\IS^{d-1}$, similarly to~$\IS^2$, which are given for a smoothness index $s\in\R$ by
$$
H^s(\IS^{d-1})=(\text{Id}-\Delta_{\IS^{d-1}})^{-s/2}L^2(\IS^{d-1})
$$
together with the norm
$$
\|f\|_{H^s(\IS^{d-1})}=\|(\text{Id}-\Delta_{\IS^{d-1}})^{s/2}f\|_{L^2(\IS^{d-1})}
$$
for some $f\in H^s(\IS^{d-1})$. We also denote $H^0(\IS^{d-1}) = L^2(\IS^{d-1})$.

The \emph{stochastic wave equation on~$\IS^{d-1}$} is defined as 
\begin{equation}\label{eq:swe_Sd}
\partial_{tt}u(t)-\Delta_{\IS^{d-1}}u(t)=\dot{W}(t),
\end{equation}
with initial conditions $u(0)=v_1\in L^2(\Omega;L^2(\IS^{d-1}))$ and $\partial_{t}u(0)=v_2\in L^2(\Omega;L^2(\IS^{d-1}))$, 
where $t \in \IT = [0,T]$, $T < + \infty$. The notation $\dot{W}$ stands for the formal 
derivative of the $Q$-Wiener process.

Denoting as before the velocity of the solution by $u_2 = \partial_{t} u_1 = \partial_{t} u$, 
one can rewrite \eqref{eq:swe_Sd} as 
\begin{align}\label{eq:swe2_Sd}
\diff X(t)&=AX(t)\,\diff t+G\,\diff W(t)\nonumber\\
X(0)&=X_0,
\end{align}
where 
\begin{equation*}
A=\begin{pmatrix}0 & I \\ \Delta_{\IS^{d-1}} & 0 \end{pmatrix}, 
\quad G=\begin{pmatrix}0\\I \end{pmatrix}, 
\quad X=\begin{pmatrix} u_1\\u_2\end{pmatrix}, 
\quad X_0=\begin{pmatrix} v_1\\v_2\end{pmatrix}.  
\end{equation*}
Existence of a unique mild solution follows as before.

Using the same ansatz as in Section~\ref{sec:sweS} with respect to the spherical harmonics on~$\IS^{d-1}$
\begin{align}\label{eq:ansatz_Sd}
u_1(t)=\displaystyle\sum_{\ell=0}^\infty\sum_{m=1}^{h(\ell,d)} u_1^{\ell,m}(t)S_{\ell, m}\quad\text{and}\quad 
u_2(t)=\displaystyle\sum_{\ell=0}^\infty\sum_{m=1}^{h(\ell,d)} u_2^{\ell,m}(t)S_{\ell, m}
\end{align}
we obtain
\begin{align*}
u_1^{\ell,m}(t)&=v_1^{\ell,m}+\int_0^t u_2^{\ell,m}(s)\,\diff s\\
u_2^{\ell,m}(t)&=v_2^{\ell,m}-\ell(\ell+d-2)\int_0^t u_1^{\ell,m}(s)\,\diff s+a^{\ell,m}(t),
\end{align*}
where $v_1^{\ell,m}$, $v_2^{\ell,m}$, resp.\ $a^{\ell,m}$  
are the coefficients of the expansions of the initial values $v_1$ and $v_2$, resp.\ weighted Brownian motion in the expansion of the noise~\eqref{eqW_Sd}. 

Similarly to~\eqref{eq:sys}, the variation of constants formula yields
\begin{align*}
\begin{cases}
u_1^{\ell,m}(t)
	&\displaystyle= R^\ell_2(t)v_1^{\ell,m}
					+R^\ell_1(t)v_2^{\ell,m}
					+\hat W_1^{\ell,m}(t)\\
u_2^{\ell,m}(t)
	&\displaystyle=-(\ell(\ell+d-2))R^\ell_1(t) v_1^{\ell,m}
					+R^\ell_2(t)v_2^{\ell,m}
					+\hat W_2^{\ell,m}(t),
\end{cases}
\end{align*}
where 
\begin{equation*}
\hat W^{\ell,m}(t)
=
\begin{pmatrix}
\hat W_1^{\ell,m}(t)\\
\hat W_2^{\ell,m}(t)
\end{pmatrix}
=\displaystyle\int_0^t R^\ell(t-s) \,\diff a^{\ell,m}(s)
\end{equation*}
with
\begin{equation*}
R^\ell (t)
= \begin{pmatrix}
R^\ell_1(t) \\ R^\ell_2(t)
\end{pmatrix}
= \begin{pmatrix}
(\ell(\ell+d-2))^{-1/2}\sin(t(\ell(\ell+d-2))^{1/2})\\
\cos(t(\ell(\ell+d-2))^{1/2})
\end{pmatrix}
\end{equation*}
for $\ell \neq 0$ and
\begin{equation*}
\hat W^{0,0}(t)
=
\begin{pmatrix}
\hat W_1^{0,0}(t)\\
\hat W_2^{0,0}(t)
\end{pmatrix}
= \begin{pmatrix}
\displaystyle\int_0^t a^{0,0}(s) \, \diff s\\
a^{0,0}(t)
\end{pmatrix}.
\end{equation*}
Note that the only change compared to Section~\ref{sec:sweS} is the value of the coefficients given by the eigenvalues~$\Delta_{\IS^{d-1}}$ and the renaming of the spherical harmonics.

As in Section~\ref{sec:conv}, we approximate the solution to the stochastic wave equation~\eqref{eq:swe2_Sd} 
by truncation of the series expansion at some finite index $\gk > 0$ and obtain
\begin{align}\label{eq:ansatzK_Sd}
u_1^\kappa(t_j)=\displaystyle\sum_{\ell=0}^\gk\sum_{m=1}^{h(\ell,d)} u_1^{\ell,m}(t_j)S_{\ell, m}\quad\text{and}\quad 
u_2^\kappa(t_j)=\displaystyle\sum_{\ell=0}^\gk\sum_{m=1}^{h(\ell,d)} u_2^{\ell,m}(t_j)S_{\ell, m}.
\end{align}

Then replacing the eigenvalues $-\ell(\ell+1)$ with $-\ell(\ell+d-2)$, the multiplicity of the eigenvalues $2\ell + 1$ with $h(\ell,d)$ and applying~\eqref{eq:conv_Lp_RF_Sd}  and~\eqref{eq:conv_as_RF_Sd} instead of Theorem~\ref{thm:iGRF_Lp_conv} in the proof of Proposition~\ref{prop:error} yields directly the following extension of Proposition~\ref{prop:error}.

\begin{proposition}
	Let $t \in \IT$ and $0=t_0 < \cdots < t_n = t$ be a discrete time partition 
	for $n \in \N$, which yields a recursive representation of the solution~$X=(u_1,u_2)$ of 
	the stochastic wave equation~\eqref{eq:swe2_Sd} on~$\IS^{d-1}$ given by \eqref{eq:ansatz_Sd}. 
	Assume that the initial values satisfy $v_1\in H^\beta(\IS^{d-1})$ and $v_2\in H^\gamma(\IS^{d-1})$. 
	Furthermore, assume that there exist $\ell_0 \in \N$, $\ga > 2$, and a constant~$C>0$ 
	such that the angular power spectrum of the driving noise $(A_\ell, \ell \in \N_0)$ satisfies 
	$A_\ell \le C \cdot \ell^{-\ga}$ for all $\ell > \ell_0$. 
	Then, the error of the approximate solution $X^\gk=(u_1^\gk,u_2^\gk)$, given by \eqref{eq:ansatzK_Sd}, 
	is bounded uniformly on any finite time interval and independently of the time discretization by
	\begin{align*}
	\|u_1(t) - u_1^\gk(t)\|_{L^p(\gO;L^2(\IS^{d-1}))} 
		&\le \hat{C}_p \cdot \left( \gk^{-(\ga+3-d)/2}+\gk^{-\beta}\|v_1\|_{H^\beta(\IS^{d-1})}
			+ \gk^{-(\gamma+1)}\|v_2\|_{H^\gamma(\IS^{d-1})} \right)\\
	\|u_2(t) - u_2^\gk(t)\|_{L^p(\gO;L^2(\IS^{d-1}))} 
		&\le \hat{C}_p \cdot \left( \gk^{-(\ga+1-d)/2}+ \gk^{-(\beta-1)}\|v_1\|_{H^\beta(\IS^{d-1})}
			+ \gk^{-\gamma}\|v_2\|_{H^\gamma(\IS^{d-1})} \right)
	\end{align*}
	for all $p\geq1$ and $\gk > \ell_0$, where $\hat{C}_p$ is a constant that may depend on 
	$p$, $C$, $T$, and $\ga$.
	
	Additionally, the error is bounded uniformly in time, 
	independently of the time discretization, and asymptotically in~$\gk$ by
	\begin{align*}
	\|u_1(t) - u_1^\gk(t)\|_{L^2(\IS^{d-1})} &\le \gk^{-\delta}, \quad \IP\text{-a.s.} \\
	\|u_2(t) - u_2^\gk(t)\|_{L^2(\IS^{d-1})} &\le \gk^{-(\delta-1)}, \quad \IP\text{-a.s.} 
	\end{align*}
	for all $\delta < \min((\ga+3-d)/2,\beta,\gamma+1)$.
\end{proposition}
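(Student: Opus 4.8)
The plan is to follow the proof of Proposition~\ref{prop:error} closely, substituting the dimension-dependent quantities throughout. The only structural changes are that the eigenvalues $\ell(\ell+1)$ of $-\Delta_{\IS^2}$ become $\ell(\ell+d-2)$, the multiplicity $2\ell+1$ becomes $h(\ell,d)$, and the convergence estimates~\eqref{eq:conv_Lp_RF_Sd}--\eqref{eq:conv_as_RF_Sd} replace Theorem~\ref{thm:iGRF_Lp_conv}.

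First I would split $\|u_1(t)-u_1^\gk(t)\|_{L^p(\gO;L^2(\IS^{d-1}))}$ into the three contributions from the initial position $v_1$, the initial velocity $v_2$, and the stochastic convolution $\hat{W}_1^{\ell,m}$, exactly as in Proposition~\ref{prop:error}. Because $\ell(\ell+d-2)\sim\ell^2$ for large $\ell$, the Sobolev-norm estimates for the two initial-value terms are unchanged up to constants: the $v_1$-term contributes $\gk^{-\beta}\|v_1\|_{H^\beta(\IS^{d-1})}$, and the $v_2$-term, carrying the extra factor $(\ell(\ell+d-2))^{-1/2}\sim\ell^{-1}$, contributes $\gk^{-(\gamma+1)}\|v_2\|_{H^\gamma(\IS^{d-1})}$.

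The key step is the stochastic convolution. I would identify it with the truncation error of a centered isotropic Gaussian random field $Z$ on $\IS^{d-1}$ whose angular power spectrum, by the analogue of Proposition~\ref{prop:covar_stoch_conv} with $\ell(\ell+1)$ replaced by $\ell(\ell+d-2)$, is
\begin{equation*}
\widetilde{A}_\ell = \frac{2(\ell(\ell+d-2))^{1/2}t-\sin(2(\ell(\ell+d-2))^{1/2}t)}{4(\ell(\ell+d-2))^{3/2}}A_\ell.
\end{equation*}
The factor $(\ell(\ell+d-2))^{-1}\sim\ell^{-2}$ again yields $\widetilde{A}_\ell \le CT\ell^{-(\alpha+2)}$ for $\ell>\ell_0$, so applying~\eqref{eq:conv_Lp_RF_Sd} with effective decay $\alpha+2$ produces a truncation error of order $\gk^{-((\alpha+2)+1-d)/2}=\gk^{-(\alpha+3-d)/2}$, the leading term in the bound for $u_1$. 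For $u_2$, the additional factor $(\ell(\ell+d-2))^{1/2}\sim\ell$ lowers each rate by one, giving $\gk^{-(\alpha+1-d)/2}$ together with the correspondingly reduced initial-value rates. As a consistency check, setting $d=3$ recovers the rates $\gk^{-\alpha/2}$ and $\gk^{-(\alpha/2-1)}$ of Proposition~\ref{prop:error}.

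Finally, the almost sure rates follow precisely as before: combining the $L^p$ bound with Chebyshev's inequality and choosing $p$ large enough that $\sum_{\gk}\gk^{(\delta-\min((\alpha+3-d)/2,\beta,\gamma+1))p}<+\infty$, the Borel--Cantelli lemma delivers $\IP$-almost sure convergence at rate $\gk^{-\delta}$ for $u_1$ and $\gk^{-(\delta-1)}$ for $u_2$. I expect no genuine obstacle, as the authors observe the result follows directly; the only point meriting care is confirming that the multiplicity growth $h(\ell,d)\sim\ell^{d-2}$ is already fully absorbed into the dimension-dependent exponent of~\eqref{eq:conv_Lp_RF_Sd}, so that no dimensional loss beyond $(\alpha+3-d)/2$ is incurred.
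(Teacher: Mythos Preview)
Your proposal is correct and follows exactly the approach the paper takes: the paper's proof is a single sentence stating that one replaces $\ell(\ell+1)$ by $\ell(\ell+d-2)$, the multiplicity $2\ell+1$ by $h(\ell,d)$, and applies~\eqref{eq:conv_Lp_RF_Sd}--\eqref{eq:conv_as_RF_Sd} in place of Theorem~\ref{thm:iGRF_Lp_conv} in the proof of Proposition~\ref{prop:error}. Your identification of the effective angular power spectrum $\widetilde{A}_\ell \le CT\ell^{-(\alpha+2)}$ and the resulting exponent $(\alpha+3-d)/2$, together with your remark that the multiplicity growth $h(\ell,d)\sim\ell^{d-2}$ is already absorbed into the exponent of~\eqref{eq:conv_Lp_RF_Sd}, captures precisely the content of that substitution.
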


\subsection{The free stochastic Schr\"odinger equation on~$\IS^2$}
We consider efficient simulations of paths of solutions to the free stochastic Schr\"odinger equation 
on the sphere $\IS^2$
\begin{equation}\label{eq:sch}
\ii\partial_tu(t)=\Delta_{\IS^2}u(t)+\dot W(t),
\end{equation}
with initial condition (possibly complex-valued) $u(0)\in L^2(\Omega;L^2(\IS^2))$. 
Here, the unknown $u(t)=u_R(t)+\ii u_I(t)$, 
with $t\in[0,T]$ for some $T<+\infty$, 
is a complex valued stochastic process. Furthermore, the notation $\dot W$ stands for the formal 
derivative of the (real-valued) $Q$-Wiener process with series expansion \eqref{eqW}. 

Considering the real and imaginary parts of the above SPDE, one can rewrite \eqref{eq:sch} as 
\begin{align}\label{eq:sch2}
\begin{split}
\diff X(t)&=AX(t)\,\diff t+G\,\diff W(t)\\
X(0)&=X_0,
\end{split}
\end{align}
where 
\begin{equation*}
 A=\begin{pmatrix}0 & \Delta_{\IS^2} \\ -\Delta_{\IS^2} & 0 \end{pmatrix}, 
 \quad G=\begin{pmatrix}0\\-I \end{pmatrix}, 
 \quad X=\begin{pmatrix} u_R\\u_I\end{pmatrix}, \quad X_0=\begin{pmatrix} u_R(0)\\u_I(0)\end{pmatrix}.
\end{equation*}
The existence of a mild form of the abstract formulation \eqref{eq:sch2} of 
the stochastic Schr\"odinger equation on the sphere follows like for the above stochastic wave equation. 
The mild form reads 
\begin{align}\label{eq:sch-mild}
X(t)=\e^{tA}X_0+\int_0^t\e^{(t-s)A}G\,\diff W(s)
\end{align}
with the semigroup 
$$
\e^{tA}=
\begin{pmatrix} 
\cos(t\Delta_{\IS^2}) & \sin(t\Delta_{\IS^2}) \\
-\sin(t\Delta_{\IS^2}) & \cos(t\Delta_{\IS^2})
\end{pmatrix}.
$$
Finally, one obtains the integral formulation of the above problem as 
\begin{align*}
\begin{cases}
u_R(t)&\displaystyle=u_R(0)+\int_0^t\Delta_{\IS^2}u_I(s)\,\diff s\\
u_I(t)&\displaystyle=u_I(0)-\int_0^t\Delta_{\IS^2}u_R(s)\,\diff s-W(t).
\end{cases}
\end{align*}

As it was done for the stochastic wave equation in Section~\ref{sec:sweS}, one can make the following 
ansatz for the real and imaginary part of solutions to~\eqref{eq:sch-mild}
$$
u_R(t)=\sum_{\ell=0}^\infty\sum_{m=-\ell}^\ell u_R^{\ell,m}Y_{\ell,m}\quad\text{and}\quad
u_I(t)=\sum_{\ell=0}^\infty\sum_{m=-\ell}^\ell u_I^{\ell,m}Y_{\ell,m}
$$
and find the following system of equations defining the coefficients of these expansions:
\begin{align*}
\begin{cases}
u_R^{\ell,m}(t)&\displaystyle=\cos(t(\ell(\ell+1))^{1/2})v_R^{\ell,m}+\sin(t(\ell(\ell+1))^{1/2})v_I^{\ell,m}+\hat W_R^{\ell,m}(t)\\
u_I^{\ell,m}(t)&\displaystyle=-\sin(t(\ell(\ell+1))^{1/2})v_R^{\ell,m}+\cos(t(\ell(\ell+1))^{1/2})v_I^{\ell,m}+\hat W_I^{\ell,m}(t),
\end{cases}
\end{align*}
where 
\begin{equation*}
\hat W^{\ell,m}(t)
=
\begin{pmatrix}
\hat W_R^{\ell,m}(t)\\
\hat W_I^{\ell,m}(t)
\end{pmatrix}
=
\begin{pmatrix}
\displaystyle\int_0^t \sin((t-s)(\ell(\ell+1))^{1/2}) \,\diff a^{\ell,m}(s)\\
\displaystyle\int_0^t \cos((t-s)(\ell(\ell+1))^{1/2}) \,\diff a^{\ell,m}(s)
\end{pmatrix}
\end{equation*}
and $v_R^{\ell,m}$, resp. $v_I^{\ell,m}$, are the coefficients of the real, resp.\ imaginary, part of the initial value $u(0)$.

It is clear that the analysis from Section~\ref{sec:conv} can directly be extend to the case of the stochastic Schr\"odinger equation on the sphere \eqref{eq:sch}. The errors in the truncation procedure, denoted by $u_R^\gk$ and $u_I^\gk$, 
of the above ansatz are given by the following proposition (presented for zero initial data for simplicity).
\begin{proposition}\label{prop:error-sch} 
Let $t \in \IT=[0,T]$ and $0=t_0 < \ldots < t_n = t$ be a discrete time partition 
for $n \in \N$, which yields a recursive representation of the solution~$X=(u_R,u_I)$ of 
the stochastic Schr\"odinger equation on the sphere~\eqref{eq:sch2} with initial data $u(0)=0$. 
Assume that there exist $\ell_0 \in \N$, $\ga > 2$, and a constant~$C>0$ 
such that the angular power spectrum of the driving noise $(A_\ell, \ell \in \N_0)$ decays with 
$A_\ell \le C \cdot \ell^{-\ga}$ for all $\ell > \ell_0$. 
Then, the error of the approximate solution $X^\gk=(u_R^\gk,u_I^\gk)$ is bounded uniformly 
in time and independently of the time discretization by
\begin{align*}
\|u_R(t) - u_R^\gk(t)\|_{L^p(\gO;L^2(\IS^2))} &\le \hat{C}_p \cdot \gk^{-(\ga/2-1)}\\
\|u_I(t) - u_I^\gk(t)\|_{L^p(\gO;L^2(\IS^2))} &\le \hat{C}_p \cdot \gk^{-(\ga/2-1)}
\end{align*}
for all $p\geq1$ and $\gk > \ell_0$, where $\hat{C}_p$ is a constant that may depend on 
$p$, $C$, $T$, and $\ga$.

On top of that, the error is bounded uniformly in time, 
independently of the time discretization, and asymptotically in~$\gk$ by
\begin{align*}
\|u_R(t) - u_R^\gk(t)\|_{L^2(\IS^2)} &\le \gk^{-(\delta-1)}, \quad \IP\text{-a.s.} \\
\|u_I(t) - u_I^\gk(t)\|_{L^2(\IS^2)} &\le \gk^{-(\delta-1)}, \quad \IP\text{-a.s.} 
\end{align*}
for all $\delta < \ga/2$.
\end{proposition}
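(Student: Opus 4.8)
The plan is to mirror the proof of Proposition~\ref{prop:error}, reducing everything to a single application of Theorem~\ref{thm:iGRF_Lp_conv}. Since the initial data vanish, the coefficient system retains only the stochastic convolutions, so that
\[
u_R(t)-u_R^\gk(t)=\sum_{\ell=\gk+1}^\infty\sum_{m=-\ell}^\ell\hat W_R^{\ell,m}(t)Y_{\ell,m},
\]
and likewise for $u_I$ with $\hat W_I^{\ell,m}(t)$. Thus each error is exactly the truncation tail of the random field $\hat W_R(t)$ (resp.\ $\hat W_I(t)$). First I would argue, exactly as in Proposition~\ref{prop:covar_stoch_conv}, that $\hat W_R(t)$ and $\hat W_I(t)$ are centered isotropic Gaussian random fields: each coefficient is a Wiener integral of a deterministic integrand, hence centered Gaussian, the coefficients are independent across $(\ell,m)$, and their variances depend only on~$\ell$.

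The one genuinely new computation is the angular power spectrum of these two fields. By the It\^o isometry, using that $\diff a^{\ell,m}$ has quadratic variation $A_\ell\,\diff s$,
\[
\widetilde A_\ell^R=\E\big[(\hat W_R^{\ell,m}(t))^2\big]=A_\ell\int_0^t\sin^2\big((t-s)(\ell(\ell+1))^{1/2}\big)\,\diff s,
\]
and correspondingly $\widetilde A_\ell^I=A_\ell\int_0^t\cos^2((t-s)(\ell(\ell+1))^{1/2})\,\diff s$. The key observation, and the essential difference from the wave equation, is that neither integrand carries the smoothing factor $(\ell(\ell+1))^{-1}$ that produced the extra decay $\widetilde A_\ell\le CT\ell^{-(\ga+2)}$ in the first component there. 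Here one has only the crude bounds $\int_0^t\sin^2\le t\le T$ and $\int_0^t\cos^2\le t\le T$, whence
\[
\widetilde A_\ell^R\le T A_\ell\le CT\,\ell^{-\ga},\qquad \widetilde A_\ell^I\le T A_\ell\le CT\,\ell^{-\ga}
\]
for $\ell>\ell_0$; both fields therefore have an algebraically decaying angular power spectrum of order $\ga>2$.

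Applying Theorem~\ref{thm:iGRF_Lp_conv} to $\hat W_R(t)$ and to $\hat W_I(t)$ with decay order~$\ga$ then gives the $L^p(\gO;L^2(\IS^2))$ truncation bound $\hat C_p\,\gk^{-(\ga-2)/2}=\hat C_p\,\gk^{-(\ga/2-1)}$ for both components, while its $\IP$-almost sure part yields $\gk^{-\gd'}$ for every $\gd'<(\ga-2)/2$, which is exactly the asserted $\gk^{-(\gd-1)}$ with $\gd<\ga/2$. I expect no real obstacle: the only content beyond bookkeeping is the one-line It\^o isometry together with the trivial bound $\sin^2,\cos^2\le1$, so that no lower bound on $\widetilde A_\ell$ is ever needed. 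The substantive point to flag is conceptual rather than technical: because $\hat W_R$ and $\hat W_I$ both lack the $(\ell(\ell+1))^{-1/2}$ factor, the two components of the Schr\"odinger problem share the common rate $\ga/2-1$, in contrast to the wave equation where the position gains one order over the velocity.
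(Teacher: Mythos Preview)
Your proposal is correct and follows exactly the approach the paper intends: the paper omits the proof, stating only that it ``follows the lines of the proof of Proposition~\ref{prop:error}'', and your reduction to Theorem~\ref{thm:iGRF_Lp_conv} via the It\^o isometry bound $\widetilde A_\ell^{R},\widetilde A_\ell^{I}\le T A_\ell\le CT\ell^{-\ga}$ is precisely that. The only cosmetic difference is that for the $\IP$-a.s.\ part the paper's template argues via Chebyshev and Borel--Cantelli from the $L^p$ bound rather than invoking the almost sure clause of Theorem~\ref{thm:iGRF_Lp_conv} directly, but the two routes are equivalent here.
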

Since the proof of this proposition follows the lines of the proof of Proposition~\ref{prop:error}, we omit it and 
instead present some numerical experiments illustrating these theoretical results. 

We compute the errors when approximating solutions to~\eqref{eq:sch} 
for various truncation indices~$\gk$ for a $Q$-Wiener process with parameter $\ga=4$. 
All other parameters are the same as in Section~\ref{sec:num}. In Figure~\ref{fig:sch}, 
we display a sample at time $T=1$ and a strong convergence plot of the real part of the numerical approximation, 
as well as errors in the approximation of a path (imaginary part) 
to the stochastic Schr\"odinger equation on the sphere. 
These illustrations are in agreement with 
the results from Proposition~\ref{prop:error-sch}.

\begin{figure}
\subfigure[Sample of solution.]{\includegraphics[width=0.32\textwidth]{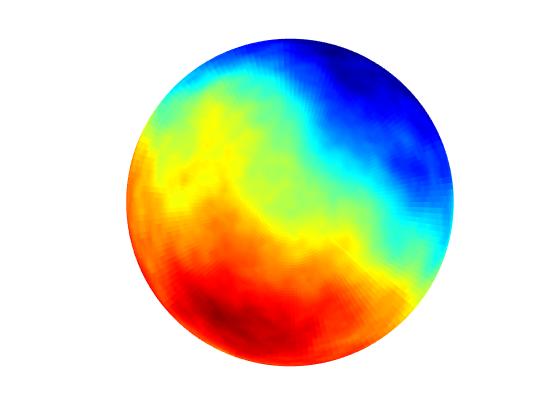}}
\subfigure[Strong errors (real part).]{\includegraphics[width=0.32\textwidth]{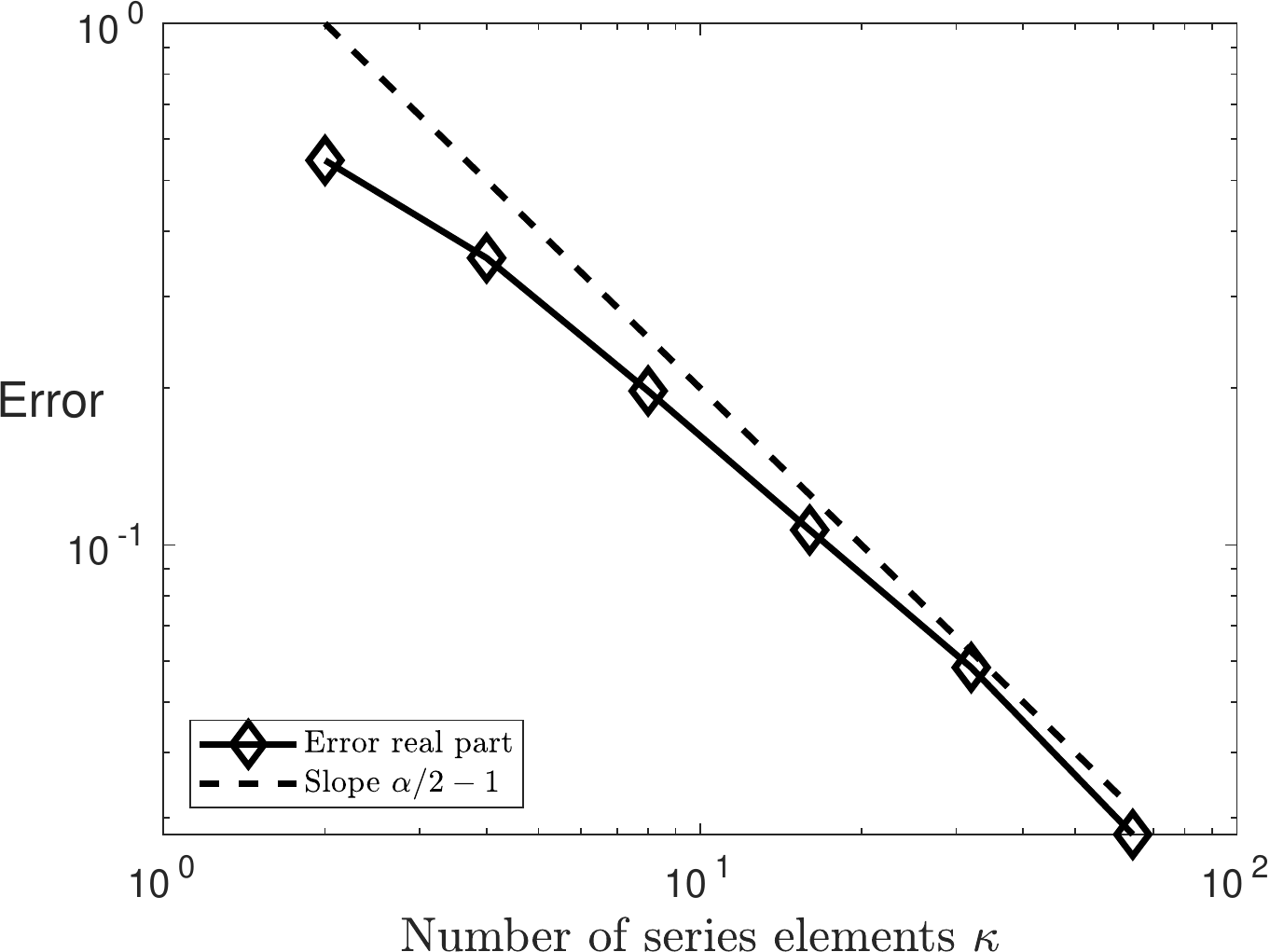}}
\subfigure[Error in approximation of one path (imaginary part).]{\includegraphics[width=0.32\textwidth]{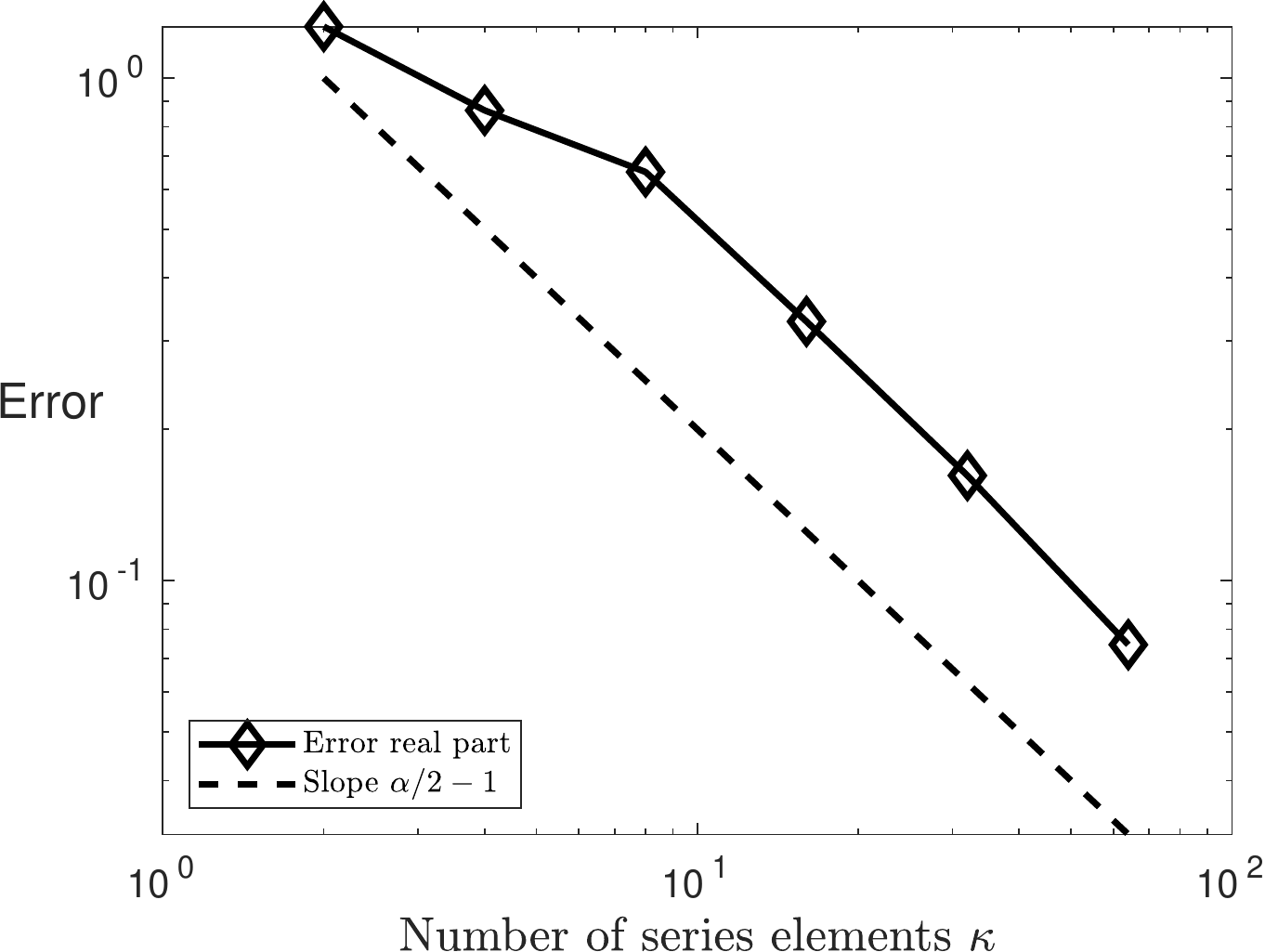}}
\caption{Sample, mean-square errors, and error of one path of the approximation of the stochastic Schr\"odinger equation 
with angular power spectrum of the $Q$-Wiener process with parameter $\ga=4$ 
and $100$~Monte Carlo samples (for the mean-square errors).}
\label{fig:sch}
\end{figure}

\bibliographystyle{plain}
\bibliography{swe_sphere}

\begin{thebibliography}{10}

\bibitem{AKL16}
Adam Andersson, Raphael Kruse, and Stig Larsson.
\newblock Duality in refined {S}obolev--{M}alliavin spaces and weak
  approximations of {SPDE}.
\newblock {\em Stoch. PDE: Anal. Comp.}, 4(1):113--149, 2016.

\bibitem{123456}
Vo~V. Anh, Philip Broadbridge, Andriy Olenko, and Yu~Guang Wang.
\newblock On approximation for fractional stochastic partial differential
  equations on the sphere.
\newblock {\em Stoch. Environ. Res. Risk Assess}, 32(9):2585--2603, 2018.

\bibitem{MR3484400}
Rikard Anton, David Cohen, Stig Larsson, and Xiaojie Wang.
\newblock Full discretization of semilinear stochastic wave equations driven by
  multiplicative noise.
\newblock {\em SIAM J. Numer. Anal.}, 54(2):1093--1119, 2016.

\bibitem{AH12}
Kendall Atkinson and Weimin Han.
\newblock {\em Spherical Harmonics and Approximations on the Unit Sphere: An
  Introduction}, volume 2044 of {\em Lecture Notes in Mathematics}.
\newblock Springer-Verlag, 2012.

\bibitem{BHS18}
Charles-Edouard Br\'ehier, Martin Hairer, and Andrew~M. Stuart.
\newblock Weak error estimates for trajectories of {SPDEs} under spectral
  {G}alerkin discretization.
\newblock {\em J. Comp. Math.}, 36(2):159--182, 2018.

\bibitem{MR4031900}
Phil Broadbridge, Alexander~D. Kolesnik, Nikolai Leonenko, and Andriy Olenko.
\newblock Random spherical hyperbolic diffusion.
\newblock {\em J. Stat. Phys.}, 177(5):889--916, 2019.

\bibitem{C12}
Julia Charrier.
\newblock Strong and weak error estimates for elliptic partial differential
  equations with random coefficients.
\newblock {\em SIAM J. Numer. Anal.}, 50(1):216--246, 2012.

\bibitem{MR3763911}
Jorge Clarke De~la Cerda, Alfredo Alegr\'{\i}a, and Emilio Porcu.
\newblock Regularity properties and simulations of {G}aussian random fields on
  the sphere cross time.
\newblock {\em Electron. J. Stat.}, 12(1):399--426, 2018.

\bibitem{10.3389/fphys.2018.01052}
Richard~H. Clayton.
\newblock Dispersion of recovery and vulnerability to re-entry in a model of
  human atrial tissue with simulated diffuse and focal patterns of fibrosis.
\newblock {\em Frontiers in Physiology}, 9:1052, 2018.

\bibitem{MR3033008}
David Cohen, Stig Larsson, and Magdalena Sigg.
\newblock A trigonometric method for the linear stochastic wave equation.
\newblock {\em SIAM J. Numer. Anal.}, 51(1):204--222, 2013.

\bibitem{DPZ92}
Giuseppe Da~Prato and Jerzy Zabczyk.
\newblock {\em Stochastic Equations in Infinite Dimensions}, volume 152 of {\em
  Encyclopedia of Mathematics and its Applications}.
\newblock Cambridge University Press, second edition, 2014.

\bibitem{Dalang2009}
Robert Dalang, Davar Khoshnevisan, Carl Mueller, David Nualart, and Yimin Xiao.
\newblock {\em A Minicourse on Stochastic Partial Differential Equations},
  volume 1962 of {\em Lecture Notes in Mathematics}.
\newblock Springer-Verlag, 2009.
\newblock Held at the University of Utah, Salt Lake City, UT, May 8--19, 2006,
  Edited by Khoshnevisan and Firas Rassoul-Agha.

\bibitem{DP09}
Arnaud Debussche and Jacques Printems.
\newblock Weak order for the discretization of the stochastic heat equation.
\newblock {\em Math. Comput.}, 78(266):845--863, 2009.

\bibitem{F70}
Xavier Fernique.
\newblock Int\'egrabilit\'e des vecteurs gaussiens.
\newblock {\em C. R. Acad. Sci., Paris, S\'er. A}, 270:1698--1699, 1970.

\bibitem{1002247}
Joshua~H. Goldwyn and Eric Shea-Brown.
\newblock The what and where of adding channel noise to the {H}odgkin--{H}uxley
  equations.
\newblock {\em PLOS Comp. Biol.}, 7(11):1--9, 11 2011.

\bibitem{HM19}
Philipp Harms and Marvin~S. M\"{u}ller.
\newblock Weak convergence rates for stochastic evolution equations and
  applications to nonlinear stochastic wave, {HJMM}, stochastic
  {S}chr\"{o}dinger and linearized stochastic {K}orteweg--de {V}ries equations.
\newblock {\em Z. Angew. Math. Phys.}, 70(1):Paper No. 16, 28, 2019.

\bibitem{Hasselmann}
Klaus Hasselmann.
\newblock Stochastic climate models part {I.} {T}heory.
\newblock {\em Tellus}, 28(6):473--485, 1976.

\bibitem{MR4059369}
Lukas Herrmann, Kristin Kirchner, and Christoph Schwab.
\newblock Multilevel approximation of {G}aussian random fields: fast
  simulation.
\newblock {\em Math. Models Methods Appl. Sci.}, 30(1):181--223, 2020.

\bibitem{MR3768993}
Lukas Herrmann, Annika Lang, and Christoph Schwab.
\newblock Numerical analysis of lognormal diffusions on the sphere.
\newblock {\em Stoch. PDE: Anal. Comp.}, 6(1):1--44, 2018.

\bibitem{JJW18}
Ladislas Jacobe~de Naurois, Arnulf Jentzen, and Timo Welti.
\newblock Lower bounds for weak approximation errors for spatial spectral
  galerkin approximations of stochastic wave equations.
\newblock In Andreas Eberle, Martin Grothaus, Walter Hoh, Moritz Kassmann,
  Wilhelm Stannat, and Gerald Trutnau, editors, {\em Stochastic Partial
  Differential Equations and Related Fields}, pages 237--248, Cham, 2018.
  Springer International Publishing.

\bibitem{PhysRevLett.56.889}
Mehran Kardar, Giorgio Parisi, and Yi-Cheng Zhang.
\newblock Dynamic scaling of growing interfaces.
\newblock {\em Phys. Rev. Lett.}, 56:889--892, 1986.

\bibitem{MR3907363}
Yoshihito Kazashi and Quoc~T. Le~Gia.
\newblock A non-uniform discretization of stochastic heat equations with
  multiplicative noise on the unit sphere.
\newblock {\em J. Complexity}, 50:43--65, 2019.

\bibitem{KLP20}
Mih\'aly Kov\'acs, Annika Lang, and Andreas Petersson.
\newblock Weak convergence of fully discrete finite element approximations of
  semilinear hyperbolic {SPDE}s with additive noise.
\newblock {\em ESAIM:M2AN}, 54(6):2199--2227, 2020.

\bibitem{KLL12}
Mih\'aly Kov\'acs, Stig Larsson, and Fredrik Lindgren.
\newblock Weak convergence of finite element approximations of linear
  stochastic evolution equations with additive noise.
\newblock {\em BIT Num. Math}, 52(1):85--108, 2012.

\bibitem{MR1874654}
Yuriy~V. Kozachenko and L.~F. Kozachenko.
\newblock Modeling {G}aussian isotropic random fields on a sphere.
\newblock {\em J. Math. Sci.}, 107(2):3751--3757, 2001.

\bibitem{MR2568294}
Xiaohong Lan and Domenico Marinucci.
\newblock On the dependence structure of wavelet coefficients for spherical
  random fields.
\newblock {\em Stochastic Process. Appl.}, 119(10):3749--3766, 2009.

\bibitem{MR3769662}
Xiaohong Lan, Domenico Marinucci, and Yimin Xiao.
\newblock Strong local nondeterminism and exact modulus of continuity for
  spherical {G}aussian fields.
\newblock {\em Stochastic Process. Appl.}, 128(4):1294--1315, 2018.

\bibitem{LLS13}
Annika Lang, Stig Larsson, and {Ch}ristoph Schwab.
\newblock Covariance structure of parabolic stochastic partial differential
  equations.
\newblock {\em Stoch. PDE: Anal. Comp.}, 1(2):351--364, 2013.

\bibitem{MR3404631}
Annika Lang and Christoph Schwab.
\newblock Isotropic {G}aussian random fields on the sphere: regularity, fast
  simulation and stochastic partial differential equations.
\newblock {\em Ann. Appl. Probab.}, 25(6):3047--3094, 2015.

\bibitem{MR4091198}
Quoc~Thong Le~Gia, Ian~H. Sloan, Robert~S. Womersley, and Yu~Guang Wang.
\newblock Isotropic sparse regularization for spherical harmonic
  representations of random fields on the sphere.
\newblock {\em Appl. Comput. Harmon. Anal.}, 49(1):257--278, 2020.

\bibitem{MP11}
Domenico Marinucci and Giovanni Peccati.
\newblock {\em Random Fields on the Sphere. Representation, Limit Theorems and
  Cosmological Applications}.
\newblock Cambridge University Press, 2011.

\bibitem{M98}
Mitsuo Morimoto.
\newblock {\em Analytic Functionals on the Sphere}, volume 178 of {\em
  Translations of Mathematical Monographs}.
\newblock American Mathematical Society, 1998.

\bibitem{Szego}
G{\'a}bor Szeg{\H{o}}.
\newblock {\em Orthogonal Polynomials}, volume XXIII of {\em Colloquium
  Publications}.
\newblock American Mathematical Society, fourth edition, 1975.

\bibitem{W15}
Xiaojie Wang.
\newblock An exponential integrator scheme for time discretization of nonlinear
  stochastic wave equation.
\newblock {\em J. Sci. Comput.}, 64(1):234--263, 2015.

\bibitem{Y83}
Myhailo~I. Yadrenko.
\newblock {\em Spectral Theory of Random Fields}.
\newblock Translation Series in Mathematics and Engineering. Optimization
  Software, Inc., Publications Division; Springer-Verlag, 1983.
\newblock Transl. from the Russian.

\end{thebibliography}

\end{document}